\newtheorem{theorem}{Theorem}[section]
\newtheorem{corollary}[theorem]{Corollary}
\newtheorem{lemma}[theorem]{Lemma}
\newtheorem{proposition}[theorem]{Proposition}
\theoremstyle{definition}
\newtheorem{definition}[theorem]{Definition}
\newtheorem{remark}[theorem]{Remark}
\newtheorem{example}[theorem]{Example}
\newcommand{\can}{\mathrm{can}}
\newcommand{\Tr}{\mathrm{Tr}}
\DeclareMathOperator{\res}{\mathsf{res}}
\DeclareMathOperator{\ev}{\mathsf{ev}}
\DeclareMathOperator{\Sol}{\mathsf{Sol}}
\DeclareMathOperator{\Coh}{\mathsf{Coh}}
\DeclareMathOperator{\VB}{\mathsf{VB}}
\DeclareMathOperator{\Pic}{\mathsf{Pic}}
\DeclareMathOperator{\Hom}{\mathsf{Hom}}
\DeclareMathOperator{\Ext}{\mathsf{Ext}}
\DeclareMathOperator{\Lin}{\mathsf{Lin}}
\DeclareMathOperator{\GL}{\mathsf{GL}}
\DeclareMathOperator{\Mat}{\mathsf{Mat}}
\newcommand{\CC}{\mathbb{C}}
\newcommand{\ZZ}{\mathbb{Z}}
\newcommand{\kA}{\mathcal{A}}
\newcommand{\kE}{\mathcal{E}}
\newcommand{\kF}{\mathcal{F}}
\newcommand{\kG}{\mathcal{G}}
\newcommand{\kO}{\mathcal{O}}
\newcommand{\kL}{\mathcal{L}}
\newcommand{\kP}{\mathcal{P}}
\newcommand{\kM}{\mathcal{M}}
\newcommand{\kV}{\mathcal{V}}
\newcommand{\mm}{\mathsf{m}}
\newcommand{\lar}{\longrightarrow}
\begin{document}

\title[Semi--stable vector bundles and the associative Yang--Baxter equation]{Semi--stable vector bundles on
elliptic curves  and  the associative Yang--Baxter equation}

\author{Igor Burban}
\address{
Mathematisches Institut,
Universit\"at Bonn,
Endenicher Allee 60, 53115 Bonn,
Germany
}
\email{burban@math.uni-bonn.de}

\author{Thilo Henrich}
\address{
Mathematisches Institut,
Universit\"at Bonn,
Endenicher Allee 60, 53115 Bonn,
Germany
}
\email{henrich@math.uni-bonn.de}

\begin{abstract}
In this paper we  study unitary  solutions of the associative Yang--Baxter equation (AYBE)
with spectral parameters.
We show that to  each point $\tau$ from the upper half-plane
  and an invertible $(n\times n)$ matrix $B$ with complex coefficients one can attach a solution of AYBE with values in $\Mat_{n \times n}(\CC) \otimes \Mat_{n \times n}(\CC)$,
depending holomorphically on  $\tau$ and  $B$.
 Moreover, we compute some of these solutions explicitly.
\end{abstract}

\maketitle

\section{Introduction}

\noindent
Let $\tau \in \CC$ be such that ${\mathsf{Im}}(\tau) >0$, $q = \exp(\pi i \tau)$ and
$$
\theta(z) = \theta_1(z|\tau) = 2 q^{\frac{1}{4}} \sum\limits_{n=0}^\infty (-1)^n q^{n(n+1)}
\sin\bigl((2n+1)\pi z\bigr)
$$
be the first theta-function of Jacobi. Consider the following meromorphic function
$\sigma(u, x) = \frac{\displaystyle \theta'(0) \theta(u+x)}{\displaystyle \theta(u) \theta(x)}$
 introduced by  Kronecker, see for example \cite[Chapter VIII]{Weil}. It is well-known that $\sigma(u, x)$  satisfies the
 celebrated  Fay's identity
\begin{equation}\label{E:Fay}
\sigma(u, x) \sigma(u+v, y) = \sigma(u+v, x+y) \sigma(-v, x) + \sigma(v, y) \sigma(u, x+y).
\end{equation}
Note that the Kronecker function also satisfies the equality $\sigma(-u, -x) = - \sigma(u, x)$.
As it was shown by Polishchuk \cite[Theorem 5]{Polishchuk1}, up
to a certain equivalence relation the Kronecker function and its trigonometric and rational degenerations
$\cot(u) + \cot(x)$ and $\frac{\displaystyle 1}{\displaystyle u} +
\frac{\displaystyle 1}{\displaystyle x}$
are the only solutions of the functional equation (\ref{E:Fay}).

In this paper we study solutions of the matrix-valued generalization of the Fay's identity (\ref{E:Fay}).
Namely, we are interested in  meromorphic functions
$r: (\CC^2, 0) \rightarrow A \otimes A$,  satisfying
the following equality:
\begin{equation}\label{E:AYBE}
r^{12}(u, x)  r^{23}(u+v, y) = r^{13}(u+v, x+y) r^{12}(-v, x) +  r^{23}(v, y)  r^{13}(u, x+y),
\end{equation}
where $A = \Mat_{n \times n}(\CC)$.
The upper indices in  (\ref{E:AYBE})   indicate various embeddings of
$A \otimes A$ into $A \otimes A \otimes A$.
For example,
the function $r^{13}$ is defined as the composition
$$
r^{13}: \mathbb{C}^2 \stackrel{r}\lar A \otimes A
\stackrel{\rho_{13}}\lar A \otimes A  \otimes A,
$$
where $\rho_{13}(x\otimes y) = x \otimes \mathbbm{1} \otimes y$. The two other
maps $r^{12}$ and $r^{23}$ have a similar meaning. The equation (\ref{E:AYBE}), called
\emph{associative Yang--Baxter equation},
was introduced by Polishchuk  \cite{Polishchuk1}. Its theory was further developed
by Burban and Kreu\ss{}ler in \cite{BK4}.

The first version of the associative Yang--Baxter equation (without spectral parameters)
appeared in a paper of Fomin and Kirillov \cite{FominKirillov}. Later, it arose in
a work  of Aguiar in the framework of the deformation theory of Hopf algebras
\cite{Aguiar}. A special version of the equation (\ref{E:AYBE}) was also considered by Odesskii and Sokolov \cite{OdeSok}.

In what follows,  we shall be interested in  \emph{unitary} solutions of the associative Yang--Baxter equation,   i.e.~in solutions of (\ref{E:AYBE}) satisfying an additional  identity
$
r(-u, -x) = - \rho\bigl(r(u, x)\bigr),
$
where $\rho: A \otimes A \rightarrow A \otimes A$ is the automorphism
given by the rule
$\rho(a \otimes b) = b \otimes a$ for all $a, b \in A$.  In this case,  the function $r(u, x)$ automatically
satisfies the ``dual equation''
\begin{equation}\label{E:AYBEdual}
r^{23}(v, y) r^{12}(u+v, x) =
r^{12}(u, x) r^{13}(v, x+y) +  r^{13}(u+v, x+y) r^{23}(-u, y),
\end{equation}
see for example \cite[Lemma 2.7]{BK4}. The unitary solutions of
(\ref{E:AYBE})  having the Laurent expansion
with respect to the first variable of the form
\begin{equation}\label{E:Laurant}
r(u, x) = \frac{\mathbbm{1} \otimes \mathbbm{1}}{u} + r_0(x) + u r_1(x) + \dots
\end{equation}
were studied by Polishchuk \cite{Polishchuk1, Polishchuk2} as well as by  Burban and Kreu\ss{}ler
\cite{BK4}. Such solutions  are
 closely related with the classical and quantum  Yang--Baxter equations, see \cite{Polishchuk1, Polishchuk2, BK4} for more details.

In this paper we construct  non-degenerate unitary solutions of
(\ref{E:AYBE}) \emph{not} satisfying the residue condition (\ref{E:Laurant}). Moreover, we
get solutions
having \emph{higher order} poles with respect to the first spectral parameter $u$.
It turns our that they can be frequently expressed
via the Kronecker function $\sigma(u, x)$ and its derivatives with respect to the first variable. For example, we show that the elliptic function
$$
r(u, x) = \sigma(u, x) \bigl(e_{11} \otimes e_{11} + e_{22} \otimes e_{22} +
e_{12} \otimes e_{21} + e_{21} \otimes e_{12}\bigr) + \sigma'(u, x)
\bigl(e_{12} \otimes e_{11} - e_{12} \otimes e_{22} - $$
$$
e_{11} \otimes e_{12} + e_{22} \otimes e_{12}\bigr) - \sigma''(u, x) e_{12} \otimes e_{12}
$$
is a unitary solution of (\ref{E:AYBE}) for $A = \Mat_{2 \times 2}(\CC)$, where derivatives
of $\sigma(u, x)$  are taken
with respect to the first variable.

The  study of solutions of the associative Yang--Baxter equation   is also motivated by an observation
of Kirillov \cite{Kirillov} stating that any unitary solution of (\ref{E:AYBE}) determines a certain
  family of commuting
first order  differential operators  and hence, a very interesting  quantum integrable systems, see also \cite[Proposition 2.9]{BK4}. In particular, one can attach to any
unitary solution of $(\ref{E:AYBE})$
a second-order differential operator of Calogero-Moser type, generalizing
the construction of Buchstaber, Felder and Veselov \cite{BFV}. We hope
that our approach to the construction of these operators via the theory of vector bundles on genus one curves will be helpful to clarify their spectral properties.
 On the other hand, our explicit solutions of (\ref{E:AYBE})  provide new identities for the higher derivatives of the
Kronecker function  $\sigma(u, x)$.

The main result of our paper is the following. We fix a complex parameter $\tau \in \CC$ such
that $\mathsf{Im}(\tau) > 0$ and an invertible matrix $B \in \GL_n(\CC)$.
Let $\Lambda = \ZZ + \tau \ZZ$ be the corresponding lattice in $\CC$ and
 $\mathfrak{G}(B)
= \left\{\lambda_1, \dots, \lambda_n\right\}$ the spectrum of $B$. We denote by  $\Sigma = \Sigma_B$ 
the lattice $\left\{\lambda - \lambda'
\, \big| \, \exp(2 \pi i\lambda), \exp(2 \pi i\lambda') \in \mathfrak{G}(B) \right\} + \Lambda \subset \CC$.  Then we attach to the pair
$(B, \tau)$
a meromorphic tensor-valued function $$r_B = r_B(v, y): \CC \times \CC \lar
\Mat_{n \times n}(\CC) \otimes \Mat_{n \times n}(\CC)$$ having the following properties:

\begin{enumerate}
\item The function $r_B$ is a non-degenerate unitary solution of
(\ref{E:AYBE}).
\item Moreover, $r_B$ depends \emph{analytically} on the entries of the matrix $B$
and is holomorphic on $(\CC \setminus \Sigma)  \times (\CC \setminus  \Lambda)$.
\item Let $S \in \GL_n(\CC)$ and $A = S^{-1} B S$. Then we have:
$$r_A(v, y) = (S^{-1} \otimes S^{-1}) r_{B}(v, y) (S \otimes S)$$
 i.~e.~$r_A$ and $r_B$ are \emph{gauge equivalent} in the sense of
\cite[Definition 2.5]{BK4}.
\item If $B = \mathsf{diag}\bigl(\exp(2\pi i \lambda_1), \dots, \exp(2\pi i \lambda_n)\bigr)$
for some $\lambda_1, \dots, \lambda_n \in \CC$
then the corresponding solution $r_B$ is given by the  following formula:
$$
r_B(v, y) =
\sum\limits_{k, \,  l = 1}^n \sigma(v - \lambda_{kl}, y) \, e_{l,    k} \otimes e_{k,    l},
$$
where $\lambda_{kl} = \lambda_k - \lambda_l$ and $\sigma(u, x)$ is the Kronecker function.
\item If $B = J_n(1)$ is the Jordan block of size $n \times n$ with  eigenvalue one then
$$
r_B(v, y) = \sum\limits_{\begin{smallmatrix}0\leq k\leq n-1\\
0\leq l\leq n-1\end{smallmatrix}} \nabla_{kl} \bigl(\sigma (v, y)\bigr) \sum\limits_{\begin{smallmatrix}1\leq i\leq n-l\\
1\leq j\leq n-k\end{smallmatrix}}  e_{i,   j+k} \otimes e_{j,  i+l},
$$
where $\nabla_{kl}$ are certain differential operators described in Definition \ref{D:def-of-nabla}.
\end{enumerate}

\noindent
The core of  our method
is the computation of certain \emph{triple Massey products} in the derived category $D^b\bigl(\Coh(E)\bigr)$, where $E = \CC/\Lambda$ is the complex torus
corresponding to the lattice $\Lambda$.

\medskip
\noindent
\emph{Acknowledgement}. This  work  was supported  by the DFG grant Bu--1866/1--2.

\section{Brief description of the main construction}\label{S:BundlesandAYBE}

In this section we present  an  algorithm attaching to a pair
$(B, \tau) \in \GL_n(\CC) \times \mathbb{H}$, where
  $\mathbb{H} \subset \CC$ is the upper half--plane,
  a non-degenerate unitary solution of the associative Yang--Baxter equation
(\ref{E:AYBE}) with values in
$\Mat_{n \times n}(\CC) \otimes \Mat_{n \times n}(\CC)$.
The explanation of this method  as well as proofs will be given in the
next section.

In what follows,  we denote  $\Lambda = \ZZ + \tau \ZZ$. Let
 $\mathfrak{G}(B)
= \bigl\{\lambda_1, \dots, \lambda_n\bigr\}$  be  the spectrum of $B$ and  $\Sigma = \Sigma_B \subset \CC$ be
the lattice $\left\{\lambda - \lambda'
\, \big| \, \exp(2 \pi i\lambda), \exp(2 \pi i\lambda') \in \mathfrak{G}(B) \right\} + \Lambda$. We construct
the tensor--valued function $$r_B: (\CC \setminus \Sigma) \times (\CC \setminus  \Lambda)
\lar \Mat_{n \times n}(\CC) \otimes \Mat_{n \times n}(\CC)$$ in the following way.

\begin{itemize}
\item For any  $v  \in \CC$ consider the function
$$e(z) = e(z, v, \tau) := - \exp\bigl(-2 \pi i (z + v + \tau)\bigr).$$
\item Let $\mathsf{Sol} = \mathsf{Sol}_{B, \,  v, \, \tau}$ be the following  complex vector space:
$$
\mathsf{Sol}
 =
\left\{\Phi: \CC \lar \Mat_{n \times n}(\CC)
\left|
\begin{array}{l}
\Phi \mbox{\textrm{\quad is holomorphic}} \\
\Phi(z+1) = \Phi(z) \\
\Phi(z+\tau) B = e(z) B \Phi(z)
\end{array}
\right.
  \right\}.
$$
\item For any $y \in \CC \setminus \Lambda$ consider the \emph{evaluation map}
$\ev_y: \mathsf{Sol} \rightarrow \Mat_{n \times n}(\CC)$ given by the formula
$\ev_y(\Phi) = \frac{1}{\bar{\theta}(y + \frac{\tau+1}{2})} \Phi(y)$, where
$$
\bar{\theta}(y) = \theta_3(y|\tau) = 1 + 2 \sum\limits_{n=0}^\infty q^{n^2}
\cos\bigl(2 \pi n y\bigr)
$$
is  the \emph{third} Jacobian  theta--function with  $q = \exp(\pi i \tau)$.
Next, consider the \emph{residue  map} $\res_0: \mathsf{Sol} \rightarrow \Mat_{n \times n}(\CC)$ given by the formula $\res_0(\Phi) = \Phi(0)$.
\end{itemize}

\begin{proposition}\label{P:preparationary}
For any $v \in \CC$ and $B \in \GL_n(\CC)$ the vector space $\Sol_{B, \, v, \,  \tau}$  has dimension
$n^2$. Moreover, if $v \notin \Sigma$ then the linear  map
$\res_0: \Sol_{B,\,  v,\,  \tau} \rightarrow \Mat_{n \times n}(\CC)$ is an isomorphism.
\end{proposition}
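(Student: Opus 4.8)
The plan is to identify $\Sol_{B,v,\tau}$ with the space of global sections of an explicit holomorphic vector bundle on the elliptic curve $E=\CC/\Lambda$, and then to deduce both assertions from standard facts about vector bundles over a genus one curve (Atiyah's classification together with Riemann--Roch and Serre duality). First I would rewrite the two quasi--periodicity conditions in the definition of $\Sol$ as automorphy factors for the generators $1$ and $\tau$ of $\Lambda$. Let $\kW=\kW_B$ be the holomorphic vector bundle on $E$ of rank $n$ and degree $0$ with constant automorphy factors $(\mathbbm{1},B)$, so that its sections are the holomorphic maps $\phi\colon\CC\to\CC^{n}$ with $\phi(z+1)=\phi(z)$ and $\phi(z+\tau)=B\phi(z)$; let $\kL_v$ be the degree one line bundle with automorphy factors $(1,e(z,v,\tau))$; and put $\kV:=\kL_v\otimes\kW$, a bundle of rank $n$ and degree $n$ with automorphy factors $(\mathbbm{1},e(z)B)$. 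Comparing the two descriptions, an element $\Phi\in\Sol_{B,v,\tau}$ is exactly a holomorphic bundle morphism $\kW\to\kV$, whence
\[
\Sol_{B,v,\tau}\;\cong\;\Hom_E(\kW,\kV)\;=\;H^{0}\bigl(E,\ \kL_v\otimes\kE nd(\kW)\bigr),
\]
and under this identification $\res_0$ becomes the evaluation of a morphism $\kW\to\kV$ at the origin $0\in E$, with values in $\Mat_{n\times n}(\CC)$.

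For the dimension statement I would decompose $\kW=\bigoplus_i\kW_i$ into indecomposable summands, with $\rk\kW_i=m_i$ and $\sum_i m_i=n$; putting $B$ in Jordan form, each $\kW_i$ is a successive self--extension of a degree zero line bundle $\kL(\lambda_i)$ with automorphy factors $(1,\exp(2\pi i\lambda_i))$, where $\exp(2\pi i\lambda_i)$ is the eigenvalue of $B$ on the Jordan block corresponding to $\kW_i$. In particular $\kW_i$ is semistable of slope $0$, while $\kL_v\otimes\kW_j$ is indecomposable of rank $m_j$ and degree $m_j$, hence semistable of slope $1$ by Atiyah's theorem. Consequently $\Hom_E(\kL_v\otimes\kW_j,\kW_i)=0$, and Serre duality on $E$ (recall $\omega_E\cong\kO_E$) gives $\Ext^{1}_E(\kW_i,\kL_v\otimes\kW_j)=0$. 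Riemann--Roch then yields
\[
\dim\Hom_E(\kW_i,\kL_v\otimes\kW_j)=\chi\bigl(\kW_i^{\vee}\otimes\kL_v\otimes\kW_j\bigr)=\deg\bigl(\kW_i^{\vee}\otimes\kL_v\otimes\kW_j\bigr)=m_im_j,
\]
and summing over $i,j$ gives $\dim\Sol_{B,v,\tau}=\sum_{i,j}m_im_j=n^{2}$.

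For the second statement, source and target of $\res_0$ both have dimension $n^{2}$, so it suffices to prove that $\res_0$ is injective whenever $v\notin\Sigma_B$. A morphism $\Phi\colon\kW\to\kV$ vanishing at $0$ factors through $\kV(-[0])=\kL_v(-[0])\otimes\kW$, so $\ker(\res_0)\cong\Hom_E\bigl(\kW,\ \kM_v\otimes\kW\bigr)$, where $\kM_v:=\kL_v\otimes\kO_E(-[0])\in\Pic^{0}(E)$. A short computation with automorphy factors -- equivalently, via the isomorphism $\Pic^{0}(E)\xrightarrow{\sim}E$ that sends $\kO_E([p]-[0])$ to $p$ -- shows that $\kM_v$ corresponds to the class of $-v$ in $E=\CC/\Lambda$, and that $\kL(\lambda_i)\otimes\kL(\lambda_j)^{\vee}\cong\kL(\lambda_i-\lambda_j)$ corresponds to $\lambda_i-\lambda_j$. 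Decomposing as above, $\Hom_E(\kW,\kM_v\otimes\kW)=\bigoplus_{i,j}\Hom_E(\kW_i,\kM_v\otimes\kW_j)$, and by Atiyah's classification a morphism between the indecomposable degree zero bundles $\kW_i$ and $\kM_v\otimes\kW_j$ is nonzero precisely when these lie over the same point of $\Pic^{0}(E)$, i.e. when $v\equiv\lambda_j-\lambda_i\pmod{\Lambda}$. The latter holds for some $i,j$ if and only if $v\in\Sigma_B$; hence for $v\notin\Sigma_B$ the kernel of $\res_0$ vanishes and $\res_0$ is an isomorphism.

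The Euler characteristic computation in the second paragraph is routine; the delicate step I expect to be the real obstacle is the bookkeeping in the last paragraph -- identifying the eigenvalue data of $B$ with points of $\Pic^{0}(E)$, pinning down the normalisations of the line bundles $\kL_v$ and $\kL(\lambda_i)$ so that the sign and the shift come out correctly, and checking that the degeneracy locus of $\res_0$ is exactly $\Sigma_B$ and not a translate of it. This rests on Atiyah's classification of vector bundles on an elliptic curve, which controls both the structure of the summands $\kW_i$ and the spaces of morphisms between indecomposable semistable bundles of degree zero.
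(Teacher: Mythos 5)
Your argument is correct and is essentially the paper's own proof: you identify $\Sol_{B,v,\tau}$ with a space of morphisms of semistable bundles on $E$, obtain the dimension $n^2$ from semistability, Serre duality and Riemann--Roch (this is Lemma \ref{L:easyfact} and Corollary \ref{C:dim-of-Sol}), and reduce the isomorphy of $\res_0$ for $v\notin\Sigma$ to the vanishing of morphisms between degree--zero semistable bundles with no common Jordan--H\"older factor (Corollary \ref{C:nomorphdegzerobundles}). The only cosmetic difference is that you place the degree--one twist at $-v$ and treat $\res_0$ as an evaluation map, proving injectivity and concluding by equality of dimensions, whereas the paper places the pole at $0$, views $\res_0$ as a residue map (Theorem \ref{T:keythm}) and reads the same conclusion off the four--term exact sequence (\ref{E:todefineresidue}) via Lemma \ref{L:ResidueisIso}; the sign ambiguity you flag is harmless precisely because $\Sigma_B$ is symmetric under $\lambda\mapsto-\lambda$.
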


\noindent
For a proof of this Proposition, see  Corollary \ref{C:dim-of-Sol}, Theorem
\ref{T:keythm} and Remark \ref{R:resmap-is-iso}.

\medskip
\noindent
Next, we continue the construction of the tensor valued function $r_{B}$.

\begin{itemize}
\item For any pair $(v , y) \in (\CC \setminus \Sigma) \times (\CC \setminus  \Lambda)$
 consider the linear
map $\tilde{r}_{B}(v, y)$ given by the following commutative diagram:
\begin{equation}\label{E:keydiagram}
\begin{array}{c}
\xymatrix@C-C@R+3mm
{
\Mat_{n \times n}(\CC)  \ar[rr]^{\tilde{r}_{B}(v, y)}  & & \Mat_{n \times n}(\CC) \\
& \Sol_{B, \, v,  \, \tau} \ar[ul]^{\res_0}
\ar[ur]_{\ev_y}& \\
}
\end{array}
\end{equation}
In other words, $\tilde{r}_{B}(v, y) := \ev_y \circ \res_0^{-1}$.
\item Let $r_{B}(v, y) \in \Mat_{n \times n}(\CC) \otimes \Mat_{n \times n}(\CC) $ be the tensor
corresponding to the linear map $\tilde{r}_{B}(v, y)$ via the canonical map
of vector spaces
$$
\mathsf{can}:
\Mat_{n \times n}(\CC) \otimes \Mat_{n \times n}(\CC) \lar \Hom_\CC\bigl(\Mat_{n \times n}(\CC), \Mat_{n \times n}(\CC) \bigr)
$$
sending a simple tensor $X \otimes Y$ to the linear map $Z \mapsto \Tr(XZ) Y$.
\end{itemize}

\noindent
The following theorem is the main result of our article.

\begin{theorem}\label{T:main}
Let $(B, \tau) \in \GL_{n}(\CC) \times \mathbb{H}$.
\begin{enumerate}
\item
 The function
$(\CC \setminus \Sigma) \times (\CC \setminus  \Lambda) \rightarrow
\Mat_{n \times n}(\CC) \otimes \Mat_{n \times n}(\CC)$, assigning  to a pair
$(v, y)$ the tensor $r_B(v, y)$ constructed above, is a non--degenerate
holomorphic unitary solution
of the associative Yang--Baxter equation (\ref{E:AYBE}). Moreover, this function
 is meromorphic on $\CC \times \CC$.
\item Let  $S \in \GL_n(\CC)$ and $A = S^{-1} B S$. Then for any $(v, y) \in
(\CC \setminus \Sigma) \times (\CC \setminus  \Lambda)$ we have the following equality:
    $$
    r_A(v, y) = (S^{-1} \otimes S^{-1}) r_B(v, y) (S \otimes S).
    $$
In particular, the solutions $r_A$ and $r_B$ are gauge--equivalent in the sense
of \cite[Definition 2.5]{BK4}.
\end{enumerate}
\end{theorem}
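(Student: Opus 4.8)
The plan is to transport the whole construction to the elliptic curve $E = \CC/\Lambda$ by means of automorphy factors, and then read off both assertions from the homological algebra of $D^{b}\bigl(\Coh(E)\bigr)$. Let $\kA = \kA_{B}$ be the holomorphic vector bundle of rank $n$ and degree $0$ on $E$ defined by the constant automorphy factor $B$ in the $\tau$-direction (and $\mathbbm{1}$ in the $1$-direction); when $B$ is put in Jordan form $\kA_{B}$ is a direct sum of Atiyah bundles each twisted by a line bundle of degree zero, hence semistable of slope $0$. Let $\kL_{v}$ be the degree-one line bundle with automorphy factor $e(z) = -\exp\bigl(-2\pi i(z+v+\tau)\bigr)$. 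Unwinding the quasi-periodicity defining $\Sol_{B,v,\tau}$, a function $\Phi$ lies in this space precisely when it is a morphism of bundles $\kA \to \kA\otimes\kL_{v}$; thus $\Sol_{B,v,\tau}\cong\Hom_{E}\bigl(\kA,\kA\otimes\kL_{v}\bigr) = H^{0}\bigl(E,\mathcal{E}nd(\kA)\otimes\kL_{v}\bigr)$, a space of dimension $n^{2}$ by Proposition~\ref{P:preparationary} (equivalently, $\mathcal{E}nd(\kA)\otimes\kL_{v}$ is semistable of slope $1$, so $\Ext^{1}_{E}(\kA,\kA\otimes\kL_{v}) = 0$). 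Under this identification $\res_{0}(\Phi) = \Phi(0)$ is evaluation of a global section at the point $0\in E$, and $\ev_{y}$ is evaluation at $y$ rescaled by $\bar\theta\bigl(y+\tfrac{\tau+1}{2}\bigr)^{-1}$, a trivialization of $\kL_{v}$ near $y$ which is precisely what will make $r_{B}$ acquire a simple pole along $y\in\Lambda$. Hence $\tilde r_{B}(v,y) = \ev_{y}\circ\res_{0}^{-1}$ is the comparison of the fibres of $\mathcal{E}nd(\kA)\otimes\kL_{v}$ over $0$ and over $y$.

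Part~(2) then comes out of a direct computation, which I would dispatch first. If $A = S^{-1}BS$ then $\mathfrak G(A) = \mathfrak G(B)$, so $\Sigma_{A} = \Sigma_{B}$, while $\Psi\mapsto S\Psi S^{-1}$ is a linear isomorphism $\Sol_{A,v,\tau}\cong\Sol_{B,v,\tau}$ which intertwines $\res_{0}$ and $\ev_{y}$ on the two sides with conjugation $C_{S}\colon Z\mapsto SZS^{-1}$ on $\Mat_{n\times n}(\CC)$; therefore $\tilde r_{A}(v,y) = C_{S}^{-1}\circ\tilde r_{B}(v,y)\circ C_{S}$, and passing through the isomorphism $\can$ and using cyclicity of the trace this turns into $r_{A}(v,y) = (S^{-1}\otimes S^{-1})\,r_{B}(v,y)\,(S\otimes S)$. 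The meromorphy on $\CC\times\CC$ and holomorphy on $(\CC\setminus\Sigma)\times(\CC\setminus\Lambda)$ in part~(1) follow by organizing the spaces $\Sol_{B,v,\tau}$ into a holomorphic family over $v\in\CC$ (the $H^{0}$'s of the holomorphically varying bundles $\mathcal{E}nd(\kA)\otimes\kL_{v}$): Proposition~\ref{P:preparationary} makes $\res_{0}$ fibrewise invertible for $v\notin\Sigma$, so Cramer's rule renders $\res_{0}^{-1}$ — hence $\tilde r_{B}$, after composing with the $y$-holomorphic family $\ev_{y}$ — meromorphic on $\CC\times\CC$ with poles confined to $(\Sigma\times\CC)\cup(\CC\times\Lambda)$. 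Non-degeneracy holds because $\res_{0}$ is an isomorphism for $v\notin\Sigma$ and $\ev_{y}$ is injective — hence bijective, both spaces being $n^{2}$-dimensional — as soon as $y$ avoids the base locus of $\mathcal{E}nd(\kA)\otimes\kL_{v}$, a proper closed subset; thus $r_{B}(v,y)$ is invertible on a dense open set.

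The substance of the proof is the associative Yang--Baxter equation~(\ref{E:AYBE}) together with unitarity, and this is where the triple Massey products enter. Following Polishchuk \cite{Polishchuk1} and Burban--Kreu\ss{}ler \cite{BK4}, I would realize $r_{B}(v,y)$ as the triple Massey product $\langle\,\cdot\,,\,\cdot\,,\,\cdot\,\rangle$ in $D^{b}\bigl(\Coh(E)\bigr)$ attached to the objects $\kO_{0}$, a shift of $\kA\otimes\kL_{v}$, and $\kO_{y}$, with connecting morphisms assembled from the canonical inclusion $\kO_{E}\to\kL_{v}$ and from $\res_{0}$ and $\ev_{y}$; then the $A_{\infty}$-constraint satisfied by these products (equivalently, the well-definedness of the corresponding fourfold Massey product) becomes exactly~(\ref{E:AYBE}), the three ``copies'' of the data sitting at parameters $(u,x)$, $(v,y)$ and $(u+v,x+y)$. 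The unitarity $r_{B}(-v,-y) = -\rho\bigl(r_{B}(v,y)\bigr)$ then results from the equivariance of the whole picture under the involution $z\mapsto -z$ of $E$ together with Serre duality ($\omega_{E}\cong\kO_{E}$) — the mechanism producing $\sigma(-u,-x) = -\sigma(u,x)$ in the rank-one case — and the dual equation~(\ref{E:AYBEdual}) is then automatic by \cite[Lemma 2.7]{BK4}.

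The hard part, as indicated, is the last step: identifying $\ev_{y}\circ\res_{0}^{-1}$ with a triple Massey product and carrying every automorphy factor and theta-normalization through the tensor products and the maps $\can$, $\rho_{12},\rho_{13},\rho_{23}$, so as to confirm that the addition of spectral parameters in~(\ref{E:AYBE}) matches the tensor product $\kL_{u}\otimes\kL_{v}\cong\kL_{u+v}$ and the addition $y\mapsto x+y$ of points on $E$ \emph{on the nose} — it is precisely this matching that forces the particular choices $e(z) = -\exp(-2\pi i(z+v+\tau))$ and $\ev_{y}(\Phi) = \bar\theta\bigl(y+\tfrac{\tau+1}{2}\bigr)^{-1}\Phi(y)$ — and that what emerges is~(\ref{E:AYBE}) itself rather than a twisted variant. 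A reassuring check is the diagonal case $B = \mathsf{diag}\bigl(\exp(2\pi i\lambda_{1}),\dots,\exp(2\pi i\lambda_{n})\bigr)$, where $\kA_{B} = \bigoplus_{k}\kL_{\lambda_{k}}$, all of $\Sol_{B,v,\tau}$, $\res_{0}$, $\ev_{y}$ split into rank-one pieces, the formula $r_{B}(v,y) = \sum_{k,l}\sigma(v-\lambda_{kl},y)\,e_{l,k}\otimes e_{k,l}$ drops out, and~(\ref{E:AYBE}) reduces to a finite list of instances of Fay's identity~(\ref{E:Fay}) with shifted arguments, which can be verified directly and pins down the normalizations.
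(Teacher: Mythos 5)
Your proposal is correct and follows essentially the same route as the paper: identify $\Sol_{B,v,\tau}$ with a $\Hom$-space of semistable bundles on $E$ via automorphy factors, prove the gauge equivalence by conjugating the $\Sol$-spaces and tracing through the canonical isomorphism $\mathsf{can}$, obtain meromorphy and non-degeneracy from holomorphic families of the bundles involved, and derive the Yang--Baxter identity and unitarity by realizing $\ev_y\circ\res_0^{-1}$ as a triple Massey product subject to the $A_\infty$-constraint and the cyclic (Serre-duality) structure on $D^b\bigl(\Coh(E)\bigr)$. The step you single out as the hard part --- matching $\ev_y\circ\res_0^{-1}$ with the Massey product including all theta-normalizations --- is exactly the step the paper also outsources (to \cite[Section 8.2]{BK4} and \cite[Theorem 4]{Polishchuk1}), so your sketch is at the same level of completeness as the published argument.
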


\noindent
The key idea  of the proof of this  theorem is to interpret the linear
morphism $\tilde{r}_{B}(v, y)$ from the diagram (\ref{E:keydiagram}) as a certain triple
Massey product in the derived category $D^b\bigl(\Coh(E)\bigr)$, where $E = \CC/\Lambda$.
The fact that ${r}_{B}(v, y)$ satisfies the equation (\ref{E:AYBE})
is a translation of the $A_\infty$--constraint $m_3 \circ (m_3 \otimes
\mathbbm{1} \otimes \mathbbm{1} +
\mathbbm{1} \otimes m_3 \otimes \mathbbm{1} +
\mathbbm{1} \otimes \mathbbm{1} \otimes m_3) = 0$. For further details, see Theorem \ref{T:thmcles},
Proposition \ref{P:transl-invariance} and Proposition \ref{P:gauge-transf}.

\section{Proof of the main theorem}
In this section we explain the algorithm of the construction of solutions of the associative Yang--Baxter equation (\ref{E:AYBE}) stated in Section \ref{S:BundlesandAYBE} and
prove    Theorem \ref{T:main}.

\subsection{Vector bundles on a one-dimensional complex torus}
Let $\tau \in \mathbb{H}$, $\Lambda = \ZZ + \tau \ZZ \subset \CC$ and $E = \CC/\Lambda$ be the corresponding complex torus. In this subsection we recall
the  basic techniques for dealing  with  holomorphic vector bundles on $E$.

\begin{definition}
Let $A: \CC \rightarrow \GL_n(\CC)$ be a holomorphic function satisfying the condition
$A(z+1) = A(z)$ for all $z \in \CC$. Such a function $A$, called \emph{automorphy factor},
 defines the following topological space
$\kE(A) := \CC \times \CC^n/\sim$, where $(z, v) \sim (z+1, v) \sim (z + \tau, A(z)v)$.
Note that we have a Cartesian diagram
of complex manifolds
$$
\xymatrix
{
\CC \times \CC^n \ar[r] \ar[d]_{\mathrm{pr}_1} & \kE(A)
\ar[d]\\
\CC \ar[r]^\pi  & E
}
$$
and $\kE(A)$ is a vector bundle of rank $n$ on the torus $E$.
\end{definition}

\begin{remark}\label{R:TrivialAutomFact}
 Let $\pi: \CC \rightarrow \CC/\Lambda = E$ be the quotient map.
Another way to define the locally free sheaf $\kE(A)$ is the following.

The open subsets $U\subset E$ for which all connected components of
$\pi^{-1}(U)$ map isomorphically to $U$, form a basis of the topology of $E$.
For such $U$, we let $U_{0}$ be a connected component of $\pi^{-1}(U)$ and
denote $U_\gamma = \gamma + U_{0}$ for all $\gamma\in\Lambda$.
Then
$\pi_*\kO_\CC^n\bigl(U\bigr) = \prod_{\gamma \in \Lambda} \kO^{n}_\CC(U_\gamma)$
and we define
$$
\kE(A)\bigl(U\bigr) :=
\left\{
(F_\gamma(z))_{\gamma \in \Lambda} \in \pi_*(\kO_\CC^n)\bigl(U\bigr)
\left|
\begin{array}{l}
F_{\gamma + 1}(z +1) = F_\gamma(z) \\
F_{\gamma + \tau}(z + \tau)  = A(z)F_\gamma(z)  \\
\end{array}\!\!\!
\right.
  \right\}.
$$
In this way we get  an embedding
$\mm_A: \kE(A)\subset \pi_* \kO_\CC^n$ as well as  a  trivialization $\gamma_A$ of $\pi^*\bigl(\kE(A)\bigr)$
given by the composition
$
\pi^* \kE(A) \xrightarrow{\pi^*(\mm_A)}
\pi^*\pi_* \kO_\CC^n \stackrel{\can}\lar \kO_\CC^n.
$

\end{remark}


\noindent
The following classical result is due to A.~Weil.

\begin{theorem}\label{T:AutomFactors}
Let $E = \CC/\Lambda$ be a one-dimensional complex torus.
\begin{enumerate}
\item For any holomorphic rank $n$ vector bundle $\kE$ on the torus $E$ there exists
an automorphy factor  $A: \CC \rightarrow \GL_n(\CC)$  such that $\kE \cong \kE(A)$.
\item For any  automorphy factors  $A:  \CC \rightarrow \GL_n(\CC)$ and
$B: \CC \rightarrow \GL_m(\CC)$  we have:
$$
\Hom\bigl(\kE(A), \kE(B)\bigr) \cong \Sol_{A, B} :=
\left\{\Phi: \CC \to \Mat_{m \times n}(\CC)
\left|
\begin{array}{l}
\Phi \mbox{\rm{\quad is holomorphic}} \\
\Phi(z+1) = \Phi(z) \\
\Phi(z+\tau)A(z) = B(z) \Phi(z)
\end{array}
\right.
  \right\}
$$
and $\kE(A) \otimes \kE(B) \cong \kE(A \otimes B)$.
\end{enumerate}
\end{theorem}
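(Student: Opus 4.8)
The plan is to deduce both statements from descent along the universal covering $\pi\colon \CC \to E$, the only genuinely analytic ingredient being the triviality of holomorphic vector bundles on the non-compact Riemann surfaces $\CC$ and $\CC^{*} = \CC/\ZZ$. This triviality follows from the Oka--Grauert principle together with the connectedness of $\GL_{n}(\CC)$: on the contractible space $\CC$ every holomorphic bundle is trivial, and on $\CC^{*}$, which is homotopy equivalent to a circle, the same holds because $\GL_{n}(\CC)$ is connected. We shall also use the standard equivalence between $\Coh(E)$ and the category $\Coh^{\Lambda}(\CC)$ of $\Lambda$-equivariant coherent sheaves on $\CC$, coming from the fact that $\Lambda$ acts on $\CC$ freely and properly discontinuously with quotient $E$; under this equivalence the automorphy-factor sheaves $\kE(A)$ correspond, by Remark \ref{R:TrivialAutomFact}, to the equivariant sheaf $\kO_{\CC}^{n}$ equipped with the cocycle whose value at the generator $1$ is the identity and whose value at $\tau$ is $A$.

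For part (1), I would start with a holomorphic rank $n$ bundle $\kE$ on $E$ and trivialize its pullback, choosing $\psi\colon \pi^{*}\kE \xrightarrow{\,\sim\,} \kO_{\CC}^{n}$. Transporting the canonical $\Lambda$-equivariant structure on $\pi^{*}\kE$ through $\psi$ produces a holomorphic cocycle $\{a_{\gamma}\colon \CC \to \GL_{n}(\CC)\}_{\gamma \in \Lambda}$ with $a_{\gamma+\gamma'}(z) = a_{\gamma}(z+\gamma')\, a_{\gamma'}(z)$, and $\kE$ is the descent of $(\kO_{\CC}^{n},\{a_{\gamma}\})$. Its restriction to $\ZZ \subset \Lambda$ defines a holomorphic bundle on $\CC/\ZZ = \CC^{*}$; since that bundle is trivial there is a holomorphic $g\colon \CC \to \GL_{n}(\CC)$ with $a_{1}(z) = g(z+1)^{-1} g(z)$, and after replacing $\psi$ by $g \circ \psi$ we may assume $a_{1} \equiv \mathbbm{1}$. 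Setting $A := a_{\tau}$, the cocycle identity applied to the two ways of writing $1 + \tau = \tau + 1$ gives $A(z+1) = a_{1+\tau}(z) = a_{\tau+1}(z) = A(z)$, so $A$ is an automorphy factor; comparing with Remark \ref{R:TrivialAutomFact} shows that $(\kO_{\CC}^{n},\{a_{\gamma}\})$ is exactly the equivariant sheaf defining $\kE(A)$, whence $\kE \cong \kE(A)$.

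For part (2), under $\Coh(E) \simeq \Coh^{\Lambda}(\CC)$ we have $\Hom\bigl(\kE(A),\kE(B)\bigr) \cong \Hom^{\Lambda}\bigl(\pi^{*}\kE(A),\pi^{*}\kE(B)\bigr)$, and using the trivializations $\gamma_{A}$, $\gamma_{B}$ of Remark \ref{R:TrivialAutomFact} this is the space of $\Lambda$-equivariant morphisms $\kO_{\CC}^{n} \to \kO_{\CC}^{m}$, i.e. of holomorphic maps $\Phi\colon \CC \to \Mat_{m\times n}(\CC)$. Equivariance under the generator $1$ reads $\Phi(z+1) = \Phi(z)$ and equivariance under $\tau$ reads $\Phi(z+\tau)\,A(z) = B(z)\,\Phi(z)$, which are precisely the defining conditions of $\Sol_{A,B}$; writing the identification out explicitly through the embeddings $\mm_{A}$, $\mm_{B}$ shows it is a vector-space isomorphism. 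For the tensor product, $\pi^{*}$ commutes with $\otimes$, so $\pi^{*}\bigl(\kE(A)\otimes\kE(B)\bigr) \cong \kO_{\CC}^{n}\otimes\kO_{\CC}^{m} \cong \kO_{\CC}^{nm}$ with the product equivariant structure, whose cocycle is $\{a_{\gamma}\otimes b_{\gamma}\}$: its value at $1$ is $\mathbbm{1}$ and at $\tau$ is $A\otimes B$, again an automorphy factor since $(A\otimes B)(z+1) = A(z)\otimes B(z)$. Descending back to $E$ yields $\kE(A)\otimes\kE(B) \cong \kE(A\otimes B)$.

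The one non-formal step is the appeal to the Oka--Grauert principle (equivalently, the classification of holomorphic vector bundles on open Riemann surfaces) in order to trivialize $\pi^{*}\kE$ over $\CC$ and the associated $\ZZ$-equivariant bundle over $\CC^{*}$; the second of these is exactly what permits the normalization $a_{1}\equiv\mathbbm{1}$, and hence the passage from an abstract bundle to a genuine automorphy factor. Everything else is bookkeeping with the cocycle relation, and the only point really demanding attention there is keeping the left/right placement of multiplication consistent with the conventions $F_{\gamma+\tau}(z+\tau) = A(z)F_{\gamma}(z)$ and $\Phi(z+\tau)A(z) = B(z)\Phi(z)$ fixed in Remark \ref{R:TrivialAutomFact}.
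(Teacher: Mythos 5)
Your proof is correct and takes essentially the same route as the paper: the paper's own proof is a one--line appeal to the monoidal equivalence between $\Lambda$--equivariant holomorphic bundles on $\CC$ and bundles on $E$, with the details delegated to \cite{BL} and \cite{Iena}. What you have written out --- trivializing $\pi^{*}\kE$ by Oka--Grauert, normalizing the cocycle at the generator $1$ via triviality over $\CC^{*}$, and translating equivariant morphisms and tensor products into the functional equations defining $\Sol_{A,B}$ and $A\otimes B$ --- is precisely the standard argument those references supply, and I see no gaps in it.
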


\begin{proof}
This result is a corollary  of the monoidal equivalence of the  category of $\Lambda$--equivariant
holomorpic vector bundles on $\CC$ and holomorphic vector bundles on
the quotient torus $E = \CC/\Lambda$. See \cite{BL} or  \cite{Iena} for a detailed proof.
\end{proof}

\begin{corollary}\label{C:AutomFactors}
{
For any pair of automorphy factors  $A, S: \CC \rightarrow \GL_n(\CC)$  we have an isomorphism of vector bundles
$
\kE(A) \cong \kE(B),
$
where $B(z) = S(z + \tau)^{-1} A(z) S(z)$. In particular,  we have an isomorphism
$\kE(A) \cong \kE(\widehat{A})$, where $\widehat{A}(z) =
\exp(2 \pi i \tau) A(z)$.
}
\end{corollary}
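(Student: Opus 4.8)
The plan is to deduce this directly from Theorem \ref{T:AutomFactors}(2). Recall that a morphism $\kE(A)\to\kE(B)$ corresponds to an element $\Phi\in\Sol_{A,B}$, i.e.~a holomorphic map $\Phi:\CC\to\Mat_{n\times n}(\CC)$ with $\Phi(z+1)=\Phi(z)$ and $\Phi(z+\tau)A(z)=B(z)\Phi(z)$; moreover, in the trivializations $\gamma_A,\gamma_B$ of $\pi^*\kE(A),\pi^*\kE(B)$ from Remark \ref{R:TrivialAutomFact}, this morphism is over the point $\pi(z)$ just multiplication by $\Phi(z)$, so it is an isomorphism precisely when $\Phi(z)\in\GL_n(\CC)$ for every $z\in\CC$. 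Hence the whole statement reduces to exhibiting such a pointwise invertible $\Phi\in\Sol_{A,B}$.

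For the first claim, I would simply take $\Phi(z):=S(z)^{-1}$. This is holomorphic with values in $\GL_n(\CC)$ since $S$ is, it is $1$-periodic because $S$ is an automorphy factor, and using $B(z)=S(z+\tau)^{-1}A(z)S(z)$ one gets $B(z)\Phi(z)=S(z+\tau)^{-1}A(z)S(z)S(z)^{-1}=S(z+\tau)^{-1}A(z)=\Phi(z+\tau)A(z)$. Thus $\Phi\in\Sol_{A,B}$ and, being everywhere invertible, it induces an isomorphism $\kE(A)\cong\kE(B)$. Equivalently, and this is perhaps the cleanest way to present it, the automorphism $(z,v)\mapsto\bigl(z,S(z)^{-1}v\bigr)$ of $\CC\times\CC^n$ carries the equivalence relation $(z,v)\sim(z+1,v)\sim(z+\tau,A(z)v)$ to the relation $(z,v)\sim(z+1,v)\sim(z+\tau,B(z)v)$, hence descends to the desired isomorphism of vector bundles on $E$.

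For the last assertion it is enough to produce one automorphy factor $S$ with $S(z+\tau)^{-1}A(z)S(z)=\exp(2\pi i\tau)A(z)$ for all $z$; the choice $S(z):=\exp(-2\pi i z)\,\mathbbm{1}_n$ works, since $S(z+1)=S(z)$, $S(z)$ is invertible for every $z$, and $S(z+\tau)^{-1}A(z)S(z)=\exp\bigl(2\pi i(z+\tau)\bigr)\exp(-2\pi i z)A(z)=\exp(2\pi i\tau)A(z)=\widehat{A}(z)$. Applying the first part with this $S$ yields $\kE(A)\cong\kE(\widehat{A})$. There is no genuine obstacle here, as everything is a formal consequence of Theorem \ref{T:AutomFactors}; the only point deserving a line of care is the remark that the morphism furnished by Theorem \ref{T:AutomFactors}(2) is an isomorphism exactly when the corresponding $\Phi$ is pointwise invertible, which is precisely why one uses $S^{-1}$ (an invertible automorphy factor) rather than an arbitrary element of $\Sol_{A,B}$.
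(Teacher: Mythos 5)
Your proof is correct and follows exactly the route the paper intends: the corollary is stated as an immediate consequence of Theorem \ref{T:AutomFactors}(2), and your choice $\Phi(z)=S(z)^{-1}$ (respectively $S(z)=\exp(-2\pi i z)\,\mathbbm{1}_n$ for the second claim) is the natural witness, with the pointwise invertibility correctly identified as the reason the resulting morphism is an isomorphism. Nothing is missing.
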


\noindent
In  the next step, we need an explicit description of the indecomposable semi-stable vector
bundles on $E$ of degree zero.

\begin{theorem}\label{T:BundlesTorus} Let $E = \CC/\Lambda$ be a complex torus.
\begin{enumerate}
\item The map $\CC \rightarrow \Pic(E)$ assigning to $\lambda \in \CC$ the line bundle
$\kL_\lambda := \kE\bigl(\exp(2 \pi i \lambda)\bigr)$ yields a bijection between the points
of $E$ and the isomorphy classes of degree zero line bundles on $E$.
\item
For any $m\ge 1$ let $$
J_m = J_m(1) = 
\left(
\begin{array}{ccccc}
1 & 1 & 0 & \dots & 0 \\
0 & 1 & 1 &  \dots & 0 \\
\vdots & \vdots & \ddots & \ddots & \vdots \\
0 & 0 & \dots & 1 & 1 \\
0 & 0 & \dots & 0 & 1
\end{array}
\right) \in \GL_m(\CC).
$$
Then $\kE(J_m)$ is isomorphic to the Atiyah bundle $\kA_m$  defined
as follows. For $m = 1$ we set  $\kA_1 = \kO$ and for $m \ge 2$ the vector
bundle
$\kA_{m}$ is recursively defined by the following property: it is the unique (up to an isomorphism)  vector bundle
 occurring as the middle term of a non-split  short exact sequence
$$
0 \lar \kA_{m-1} \lar \kA_m \lar \kO \lar 0.
$$
\item Let $B \in \GL_n(\CC)$ and $J = J_{m_1}(\mu_1) \oplus \dots \oplus J_{m_t}(\mu_t)$
be the Jordan normal form of $B$ with  $\mu_l = \exp(2 \pi i \lambda_l)$ for some
$\lambda_l \in \CC$, $ 1 \le l \le t$. Then we have:
$$
\kE(B) \cong (\kL_{\lambda_1} \otimes \kA_{m_1}) \oplus \dots \oplus (\kL_{\lambda_t} \otimes
\kA_{m_t}).
$$
In particular, $\kE(B)$ is a semi--stable vector bundle of degree zero on the torus $E$, whose
Jordan--H\"older quotients are $\kL_{\lambda_1}, \dots, \kL_{\lambda_t}$. Moreover, for
any semi-stable vector bundle $\kE$ of rank $n$ and degree zero on the torus $E$
there exists
a matrix $B \in \GL_n(\CC)$ such that $\kE \cong \kE(B)$.
\end{enumerate}
\end{theorem}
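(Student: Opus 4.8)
The plan is to derive all three parts from Weil's automorphy‑factor description (Theorem \ref{T:AutomFactors}), supplemented for the last assertion only by Atiyah's classification of indecomposable bundles on a genus‑one curve; I would treat the parts in the order stated.

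\emph{Part (1).} Every $\kL_\lambda=\kE(\exp(2\pi i\lambda))$ has degree zero since its automorphy factor is a nonzero constant. For injectivity of $\lambda\mapsto\kL_\lambda$ modulo $\Lambda$, Theorem \ref{T:AutomFactors}(2) identifies $\Hom(\kL_\lambda,\kL_{\lambda'})$ with the space of holomorphic $\Phi\colon\CC\to\CC$ satisfying $\Phi(z+1)=\Phi(z)$ and $\Phi(z+\tau)=\exp(2\pi i(\lambda'-\lambda))\Phi(z)$; expanding $\Phi$ in its Fourier series in $z$ and using $\mathsf{Im}(\tau)>0$, one sees this space is nonzero exactly when $\lambda-\lambda'\in\Lambda$, and then it is spanned by a single nowhere vanishing, hence invertible, Fourier monomial. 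For surjectivity, a degree‑zero line bundle is $\kE(A)$ with $A\colon\CC\to\CC^{\ast}$ holomorphic and $1$‑periodic; writing $A=\exp(2\pi i h)$ with $h$ holomorphic and $h(z+1)=h(z)+d$, $d\in\ZZ$, one checks that $|d|=\deg\kE(A)$ (e.g.\ by comparison with the automorphy factor of the theta line bundle, which has a single zero), so $d=0$ and $h$ is $1$‑periodic. Solving $\psi(z)-\psi(z+\tau)=-\bigl(h(z)-h_0\bigr)$ term by term in Fourier coefficients — legitimate since $\exp(2\pi ik\tau)\neq1$ for $k\neq0$ — and twisting $A$ by the coboundary attached to $\exp(2\pi i\psi)$ via Corollary \ref{C:AutomFactors}, we reduce $A$ to the constant $\exp(2\pi i h_0)$, i.e.\ $\kE(A)\cong\kL_{h_0}$.

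\emph{Part (2).} I induct on $m$; for $m=1$ we have $\kE(J_1)=\kE(1)=\kO=\kA_1$. For $m\ge2$, the span of the first $m-1$ basis vectors is $J_m$‑invariant with $J_m$ acting as $J_{m-1}$ and with trivial quotient action, so the equivalence of Theorem \ref{T:AutomFactors} produces a short exact sequence $0\to\kE(J_{m-1})\to\kE(J_m)\to\kO\to0$, and $\kE(J_{m-1})\cong\kA_{m-1}$ by induction. It remains to check that this extension is non‑split; then $\kE(J_m)$ is, by the defining property of $\kA_m$ (the middle term being unique since $\Ext^1(\kO,\kA_{m-1})$ is one‑dimensional by Serre duality), isomorphic to $\kA_m$. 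Non‑splitness is a Fourier computation: $\Hom(\kO,\kE(J_m))$ is identified with the set of holomorphic $\Phi\colon\CC\to\Mat_{m\times1}(\CC)$ with $\Phi(z+1)=\Phi(z)$ and $\Phi(z+\tau)=J_m\Phi(z)$, and since $J_m$ is unipotent all Fourier coefficients of $\Phi$ vanish except the constant one, which must be a $J_m$‑eigenvector; hence $\dim\Hom(\kO,\kE(J_m))=1<2\le\dim\Hom(\kO,\kA_{m-1}\oplus\kO)$, so the sequence cannot split.

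\emph{Part (3).} Write $J=S^{-1}BS$ for a constant $S\in\GL_n(\CC)$; since constant matrices are automorphy factors, Corollary \ref{C:AutomFactors} gives $\kE(B)\cong\kE(J)$, and a block‑diagonal automorphy factor corresponds to a direct sum, so $\kE(J)\cong\bigoplus_{l=1}^{t}\kE(J_{m_l}(\mu_l))$. As $\mu_l J_{m_l}(1)$ and $J_{m_l}(\mu_l)$ have the same Jordan form (a single block of size $m_l$ with eigenvalue $\mu_l$), hence are conjugate, Corollary \ref{C:AutomFactors}, Theorem \ref{T:AutomFactors}(2) and part (2) give $\kE(J_{m_l}(\mu_l))\cong\kE\bigl(\mu_l J_{m_l}(1)\bigr)\cong\kE(\exp(2\pi i\lambda_l))\otimes\kE(J_{m_l}(1))\cong\kL_{\lambda_l}\otimes\kA_{m_l}$, which is the asserted decomposition. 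Degree zero and semistability are then formal: degree‑zero line bundles and the $\kA_m$ (iterated self‑extensions of $\kO$) are semistable of slope $0$, and tensoring by a line bundle and forming direct sums of slope‑$0$ semistable bundles preserve semistability; the Jordan--H\"older factors of $\kL_\lambda\otimes\kA_m$ are $m$ copies of $\kL_\lambda$, obtained by tensoring the filtration $0\subset\kA_1\subset\cdots\subset\kA_m$ with $\kL_\lambda$ and using that slope‑$0$ stable bundles on a genus‑one curve are exactly the degree‑zero line bundles. Finally, for the converse I invoke Atiyah's classification: a semistable bundle $\kE$ of rank $n$ and degree $0$ is a direct sum $\bigoplus_l\kL_{\lambda_l}\otimes\kA_{m_l}$ with $\sum_l m_l=n$, hence $\kE\cong\kE(B)$ for $B=J_{m_1}(\exp(2\pi i\lambda_1))\oplus\cdots\oplus J_{m_t}(\exp(2\pi i\lambda_t))\in\GL_n(\CC)$.

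\emph{Main obstacle.} The only genuinely external ingredient is Atiyah's classification (together with the identification of slope‑zero stable bundles as degree‑zero line bundles) used for the converse in Part (3); everything else reduces to Fourier analysis of the spaces $\Sol$ via Theorem \ref{T:AutomFactors}. Within the self‑contained part the fussiest point is the identification $|d|=\deg\kE(A)$ in Part (1), which one either checks by hand against the theta line bundle or imports from the Appell--Humbert theorem.
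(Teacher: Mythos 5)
Your proposal is correct and follows essentially the same route as the paper: the paper simply cites \cite[Section 8.1]{BK4} and \cite{Iena} for parts (1) and (2), and proves part (3) exactly as you do, by conjugating to Jordan form via Corollary \ref{C:AutomFactors}, using $\kE\bigl(J_m(\lambda)\bigr)\cong\kL_\lambda\otimes\kA_m$, and invoking Atiyah's classification for the converse. Your Fourier-analytic verifications of (1) and (2) are the standard arguments contained in those references, so you have merely supplied the details the paper outsources.
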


\begin{proof} A proof of the first two statements can for instance be found in
\cite[Section 8.1]{BK4} or in \cite{Iena}.  To show the third one observe  that by Corollary
\ref{C:AutomFactors} we have an isomorphism $\kE(B) \cong \kE(J)$. Since for
any $\lambda \in \CC$ and $m \in \mathbb{N}$ we have an isomorphism
$\kE\bigl(J_m(\lambda)\bigr) \cong \kL_\lambda \otimes \kA_m$, we have:
$\kE(J) \cong (\kL_{\lambda_1} \otimes \kA_{m_1}) \oplus \dots \oplus (\kL_{\lambda_t} \otimes
\kA_{m_t})$. Hence, the result follows from Atiyah's classification
of vector bundles on $E$ \cite{Atiyah}.
\end{proof}

\begin{corollary}\label{C:nomorphdegzerobundles}
Let $A \in \GL_n(\CC)$ and
$\mathfrak{G}(A) = \bigl\{\exp(2 \pi i \lambda_1), \dots, \exp(2 \pi i \lambda_n) \bigr\}$ be its spectrum,
$B \in \GL_m(\CC)$ and $\mathfrak{G}(B) = \bigl\{\exp(2 \pi i \mu_1), \dots, \exp(2 \pi i \mu_m) \bigr\}$
be its spectrum. Assume that $\lambda_k - \mu_l \notin \Lambda$ for all
$1 \le k \le n$ and $1 \le l \le m$. Then we have:
$$
\Hom\bigl(\kE(A), \kE(B)\bigr) = 0 = \Ext^1\bigl(\kE(A), \kE(B)\bigr).
$$
\end{corollary}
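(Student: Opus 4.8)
The plan is to reduce both vanishings to the elementary description of $\Hom$-spaces between bundles of the form $\kE(C)$ supplied by Theorem \ref{T:AutomFactors}(2), and then to pass from $\Hom$ to $\Ext^1$ by Serre duality. First I would compute $\Hom\bigl(\kE(A),\kE(B)\bigr)$. Viewing $A \in \GL_n(\CC)$ and $B \in \GL_m(\CC)$ as constant automorphy factors, Theorem \ref{T:AutomFactors}(2) identifies this space with $\Sol_{A,B}$, i.e.\ with the holomorphic maps $\Phi\colon \CC \to \Mat_{m\times n}(\CC)$ satisfying $\Phi(z+1)=\Phi(z)$ and $\Phi(z+\tau)A = B\Phi(z)$. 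A $1$-periodic entire function descends to a holomorphic function on $\CC^{\ast}=\CC/\ZZ$, hence admits a Laurent expansion $\Phi(z)=\sum_{k\in\ZZ}\Phi_k\,w^k$ with $w=\exp(2\pi i z)$, convergent on all of $\CC^{\ast}$; substituting $z\mapsto z+\tau$ multiplies the $k$-th coefficient by $p^k$, where $p=\exp(2\pi i\tau)$. Comparing coefficients, the quasi-periodicity relation becomes the family of Sylvester equations $B\,\Phi_k = p^k\,\Phi_k\,A$, $k\in\ZZ$. Since a Sylvester equation $BX = XC$ with $X\in\Mat_{m\times n}(\CC)$ has only the trivial solution as soon as $B$ and $C$ share no eigenvalue, and since here $C=p^kA$ has spectrum $p^k\,\mathfrak{G}(A)=\bigl\{\exp\bigl(2\pi i(\lambda_j+k\tau)\bigr)\,\big|\,1\le j\le n\bigr\}$, which meets $\mathfrak{G}(B)=\bigl\{\exp(2\pi i\mu_l)\bigr\}$ exactly when $\mu_l-\lambda_j\in\ZZ+k\tau\subset\Lambda$ for some $j,l$, the hypothesis $\lambda_k-\mu_l\notin\Lambda$ forces every $\Phi_k$ to vanish. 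Hence $\Phi\equiv 0$ and $\Hom\bigl(\kE(A),\kE(B)\bigr)=0$.

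The hypothesis is symmetric in $A$ and $B$, so the same computation yields $\Hom\bigl(\kE(B),\kE(A)\bigr)=0$. The vanishing of $\Ext^1$ then follows from Serre duality on the elliptic curve $E$: using $\omega_E\cong\kO_E$ one gets $\Ext^1\bigl(\kE(A),\kE(B)\bigr)\cong\Hom\bigl(\kE(B),\kE(A)\otimes\omega_E\bigr)^{\vee}=\Hom\bigl(\kE(B),\kE(A)\bigr)^{\vee}=0$. Alternatively, both $\kE(A)$ and $\kE(B)$ have degree zero by Theorem \ref{T:BundlesTorus}, so Riemann--Roch on the genus one curve $E$ gives $\dim\Hom\bigl(\kE(A),\kE(B)\bigr)-\dim\Ext^1\bigl(\kE(A),\kE(B)\bigr)=\chi\bigl(\kE(A),\kE(B)\bigr)=0$, and the vanishing of $\Ext^1$ then follows from that of $\Hom$.

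I do not anticipate a genuine obstacle here. The two analytic inputs — that a $1$-periodic entire matrix-valued function has a globally convergent Laurent expansion in $\exp(2\pi i z)$, and that the Sylvester equation $BX=XC$ has no nonzero solution when $B$ and $C$ have disjoint spectra — are classical, and the Sylvester formulation makes the argument insensitive to whether $A$ or $B$ is diagonalizable. The one point requiring attention is the bookkeeping that translates the condition $\lambda_k-\mu_l\notin\Lambda$ into the spectral statement $\mathfrak{G}(B)\cap p^k\,\mathfrak{G}(A)=\emptyset$ for every $k\in\ZZ$. A more geometric route is also available: by Theorem \ref{T:BundlesTorus} the bundles $\kE(A)$ and $\kE(B)$ are semistable of slope zero with Jordan--H\"older factors the degree-zero line bundles $\kL_{\lambda_\bullet}$ and $\kL_{\mu_\bullet}$, which the hypothesis (together with Theorem \ref{T:BundlesTorus}(1)) forces to be pairwise non-isomorphic; a nonzero morphism $\kE(A)\to\kE(B)$ would have image a nonzero semistable sheaf of slope zero that is simultaneously a quotient of $\kE(A)$ and a subsheaf of $\kE(B)$, hence would share a Jordan--H\"older factor with each — a contradiction — and $\Ext^1$ would again vanish by the duality argument above.
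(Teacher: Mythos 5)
Your argument is correct, but your primary route differs from the one in the paper. The paper argues geometrically: by Theorem \ref{T:BundlesTorus}, $\kE(A)$ and $\kE(B)$ are semi-stable of degree zero with Jordan--H\"older quotients the line bundles $\kL_{\lambda_\bullet}$ and $\kL_{\mu_\bullet}$, and the hypothesis $\lambda_k-\mu_l\notin\Lambda$ means (via Theorem \ref{T:BundlesTorus}(1)) that these factors are pairwise non-isomorphic; hence $\Hom\bigl(\kE(A),\kE(B)\bigr)=0=\Hom\bigl(\kE(B),\kE(A)\bigr)$, and Serre duality kills $\Ext^1$ --- exactly the ``more geometric route'' you sketch at the end of your proposal. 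Your main argument instead works directly with $\Sol_{A,B}$ from Theorem \ref{T:AutomFactors}(2): Fourier expansion in $w=\exp(2\pi i z)$ turns the quasi-periodicity into the Sylvester equations $B\,\Phi_k=p^k\,\Phi_k\,A$, and the disjoint-spectrum criterion forces all $\Phi_k$ to vanish; this is correct (note only the harmless clash between $k$ as eigenvalue index and $k$ as Fourier mode). The trade-off: your analytic argument is self-contained and elementary, avoiding the appeal to Atiyah's classification and the theory of Jordan--H\"older filtrations that underlies Theorem \ref{T:BundlesTorus}(3), and it makes the role of the lattice condition completely transparent; the paper's argument is shorter and stays within the semistability formalism it uses throughout. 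Both treatments of $\Ext^1$ (Serre duality, or equivalently Riemann--Roch in degree zero) coincide.
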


\begin{proof}
The assumption on the eigenvalues of $A$ and $B$ implies that the degree zero semi-stable
vector bundles $\kE(A)$ and $\kE(B)$ have no common Jordan--H\"older quotients. From this
fact it follows that
$$\Hom\bigl(\kE(A), \kE(B)\bigr) = 0 = \Hom\bigl(\kE(B), \kE(A)\bigr) \cong
\Ext^1\bigl(\kE(A), \kE(B)\bigr)^*,
$$
where the last isomorphism is given by the Serre duality.
\end{proof}

\begin{lemma}\label{L:LBdegone}
Let $\varphi(z) = \exp(-\pi i \tau - 2 \pi i z)$, $x \in \CC$ and $[x]$ be the corresponding
divisor of degree one on $E$. Then we have an isomorphism:
$$
\kO_E\bigl([x]\bigr) \cong \kE\bigl(\varphi(z + \frac{\tau +1}{2}-x)\bigr).
$$
\end{lemma}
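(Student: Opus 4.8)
The plan is to exhibit a concrete nonzero global section of the line bundle $\kE\bigl(\varphi(z+\tfrac{\tau+1}{2}-x)\bigr)$ whose zero divisor on $E$ is exactly the point $[x]$; by the standard correspondence between effective divisors and line bundles equipped with a section, this identifies $\kE\bigl(\varphi(z+\tfrac{\tau+1}{2}-x)\bigr)$ with $\kO_E\bigl([x]\bigr)$.

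First I would record the two quasi-periodicity properties of the third Jacobi theta-function, writing it in the bilateral form $\bar\theta(w)=\sum_{n\in\ZZ}q^{n^2}e^{2\pi i n w}$ with $q=\exp(\pi i\tau)$. A shift of the summation index gives $\bar\theta(w+1)=\bar\theta(w)$ and $\bar\theta(w+\tau)=e^{-\pi i\tau-2\pi i w}\bar\theta(w)=\varphi(w)\,\bar\theta(w)$. Now set $s(z):=\bar\theta\bigl(z+\tfrac{\tau+1}{2}-x\bigr)$. The two identities above give immediately $s(z+1)=s(z)$ and $s(z+\tau)=\varphi\bigl(z+\tfrac{\tau+1}{2}-x\bigr)\,s(z)$. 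Putting $A(z):=\varphi\bigl(z+\tfrac{\tau+1}{2}-x\bigr)$ — a nowhere-vanishing holomorphic function which, being an exponential of $-2\pi i z$ up to a constant, satisfies $A(z+1)=A(z)$ and is thus a rank-one automorphy factor — this says precisely that $s\in\Sol_{\mathbbm{1},A}$. Hence, by Theorem \ref{T:AutomFactors}(2) together with $\kE(\mathbbm{1})=\kO_E$, the function $s$ defines a nonzero morphism of sheaves $\kO_E\to\kE(A)$, i.e.\ a nonzero global section of $\kE(A)$.

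Next I would determine the zero divisor of this section. The function $s(z)=\bar\theta\bigl(z+\tfrac{\tau+1}{2}-x\bigr)$ vanishes if and only if $z+\tfrac{\tau+1}{2}-x$ lies in the zero set $\tfrac{1+\tau}{2}+\Lambda$ of $\bar\theta$, i.e.\ if and only if $z\equiv x\pmod{\Lambda}$; and this zero is simple, since $\bar\theta$ has — classically, or by the argument principle applied with the quasi-periodicity relations recorded above — exactly one zero (counted with multiplicity) in a fundamental parallelogram, a simple one at $\tfrac{1+\tau}{2}$. Therefore, descending to $E=\CC/\Lambda$, the morphism $\kO_E\to\kE(A)$ is injective as a map of sheaves with cokernel the structure sheaf of the reduced point $[x]$, so $\kE(A)\cong\kO_E\bigl(\mathrm{div}\,s\bigr)=\kO_E\bigl([x]\bigr)$, which is the assertion.

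The one point demanding care is matching the exponential prefactor in the transformation law of $\bar\theta$ with the function $\varphi$ of the statement — that is, pinning down the theta-function convention so that $\bar\theta(w+\tau)=\varphi(w)\bar\theta(w)$ holds on the nose — and verifying that the unique zero of $\bar\theta$ is simple and located at $\tfrac{1+\tau}{2}$; once these conventions are fixed everything else is formal. A convenient consistency check is that $A(z)=\varphi(z+\tfrac{\tau+1}{2}-x)$ equals a constant times $e^{-2\pi i z}$, so $\deg\kE(A)=1=\deg\kO_E([x])$, in agreement with the existence of a section having a single simple zero.
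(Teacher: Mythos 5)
Your proof is correct. The paper itself does not prove this lemma but only refers to \cite[Section 8.1]{BK4}, and your argument --- checking that $s(z)=\bar\theta\bigl(z+\tfrac{\tau+1}{2}-x\bigr)$ lies in $\Sol_{\mathbbm{1},A}$ for $A(z)=\varphi\bigl(z+\tfrac{\tau+1}{2}-x\bigr)$, hence gives a global section of $\kE(A)$, and that its divisor is the reduced point $[x]$ because $\bar\theta$ has a single simple zero at $\tfrac{1+\tau}{2}$ modulo $\Lambda$ --- is exactly the standard computation that the cited reference carries out; the quasi-periodicity identities and the degree consistency check are all verified correctly.
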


\begin{proof}
A proof of this result can be for instance found in \cite[Section 8.1]{BK4}.
\end{proof}

\subsection{Residue and evaluation morphisms} Let $\Omega_E$ denote the sheaf
of regular differential one forms on the torus $E$. Then we have
an isomorphism $\kO_E \cong \Omega_E$ given by a  nowhere vanishing
differential form, e.g.~by $\omega = dz$. For any $x \in E$
consider the canonical short exact sequence
\begin{equation}\label{E:residue}
0 \to \Omega_E \to \Omega_E(x) \stackrel{\underline{\res}_x}\lar \CC_x \to 0.
\end{equation}
Let $\kF$ and $\kG$ be a pair of vector bundles on $E$. We identify the line bundles
 $\Omega_E$ and $\kO_E$ using the differential form $\omega$,
tensor the sequence (\ref{E:residue}) with $\kG$ and then apply
the functor $\Hom(\kF,\,-\,)$. As a result, we obtain a long exact sequence
\begin{equation}\label{E:todefineresidue}
0 \lar
\Hom(\kF, \kG) \lar \Hom\bigl(\kF, \kG(x)\bigr) \lar
\Hom\bigl(\kF, \kG \otimes \CC_x\bigr) \lar \Ext^1(\kF, \kG).
\end{equation}

\begin{definition}
The linear map $\res^{\kF, \,  \kG}_x(\omega): \Hom\bigl(\kF, \kG(x)\bigr)
\rightarrow \Lin\bigl(\kF\big|_x, \; \kG\big|_x\bigr)$ is   the composition
of the following canonical morphisms
$$\Hom\bigl(\kF, \kG(x)\bigr) \lar
\Hom\bigl(\kF, \kG \otimes \CC_x\bigr)
\lar
\Lin\bigl(\kF\big|_x, \; \kG\big|_x\bigr),
$$
where the first map comes from the long exact sequence
(\ref{E:todefineresidue})
and the second one is a canonical isomorphism. The morphism   $\res^{\kF, \,  \kG}_x(\omega)$ is called \emph{residue map}.
\end{definition}

\noindent
The following lemma is a straightforward corollary of the definition and
the long exact sequence (\ref{E:todefineresidue}).

\begin{lemma}\label{L:ResidueisIso}
Let $\kF$ and $\kG$ be a pair of vector bundles on $E$ such that $\Hom(\kF, \kG) = 0
= \Ext^1(\kF, \kG)$. Then for any $x \in E$ the residue map $\res^{\kF, \, \kG}_x(\omega)$
is an isomorphism.
\end{lemma}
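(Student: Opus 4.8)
The plan is to read off the statement directly from the long exact sequence (\ref{E:todefineresidue}). Recall that the residue map $\res^{\kF,\,\kG}_x(\omega)$ is by definition the composition of the connecting arrow $\Hom\bigl(\kF,\kG(x)\bigr)\to\Hom\bigl(\kF,\kG\otimes\CC_x\bigr)$ coming from that sequence with the canonical isomorphism $\Hom\bigl(\kF,\kG\otimes\CC_x\bigr)\cong\Lin\bigl(\kF|_x,\kG|_x\bigr)$. Since the second factor is always an isomorphism, it suffices to show that under the hypothesis $\Hom(\kF,\kG)=0=\Ext^1(\kF,\kG)$ the first factor is an isomorphism.

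For this I would simply substitute the vanishing hypotheses into (\ref{E:todefineresidue}). That sequence reads
\begin{equation*}
0 \lar \Hom(\kF,\kG) \lar \Hom\bigl(\kF,\kG(x)\bigr) \lar \Hom\bigl(\kF,\kG\otimes\CC_x\bigr) \lar \Ext^1(\kF,\kG).
\end{equation*}
With $\Hom(\kF,\kG)=0$ the map $\Hom\bigl(\kF,\kG(x)\bigr)\to\Hom\bigl(\kF,\kG\otimes\CC_x\bigr)$ is injective, and with $\Ext^1(\kF,\kG)=0$ it is surjective (because in the long exact sequence the cokernel of this map injects into $\Ext^1(\kF,\kG)$). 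Hence it is an isomorphism, and composing with the canonical isomorphism $\Hom\bigl(\kF,\kG\otimes\CC_x\bigr)\cong\Lin\bigl(\kF|_x,\kG|_x\bigr)$ shows $\res^{\kF,\,\kG}_x(\omega)$ is an isomorphism, as claimed.

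The only point deserving a word of justification is that (\ref{E:todefineresidue}) really is exact at the term $\Hom\bigl(\kF,\kG\otimes\CC_x\bigr)$ — that is, that the displayed four-term sequence continues a genuine long exact sequence so that the image of the third arrow equals the kernel of the fourth. This is immediate: it is the piece $0\to\Hom(\kF,-)\to\Hom(\kF,-)\to\Hom(\kF,-)\to\Ext^1(\kF,-)$ of the long exact $\Hom$–$\Ext$ sequence attached to the short exact sequence $0\to\kG\to\kG(x)\to\kG\otimes\CC_x\to0$ (which is (\ref{E:residue}) tensored with $\kG$, after the identification $\Omega_E\cong\kO_E$ via $\omega$), so exactness holds at every interior term, in particular at $\Hom\bigl(\kF,\kG\otimes\CC_x\bigr)$.

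I do not expect any real obstacle here; the statement is essentially a formal consequence of the definition of the residue map together with left-exactness of $\Hom$ and the start of the $\Ext$ long exact sequence. The only thing to be careful about is bookkeeping — keeping track of which map in (\ref{E:todefineresidue}) is the connecting homomorphism and invoking the two hypotheses at the correct spots (injectivity from $\Hom(\kF,\kG)=0$, surjectivity from $\Ext^1(\kF,\kG)=0$).
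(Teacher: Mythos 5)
Your argument is correct and is precisely the one the paper intends: the lemma is stated there as a direct consequence of the definition of the residue map and exactness of the sequence (\ref{E:todefineresidue}), with injectivity coming from $\Hom(\kF,\kG)=0$ and surjectivity from $\Ext^1(\kF,\kG)=0$. Nothing further is needed.
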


\begin{definition}
Let $\kF$ and $\kG$ be a pair of vector bundles on $E$ and $x, y \in E$ be a pair of
\emph{distinct} points. Then the linear map $\ev^{\kF, \, \kG(x)}_y$
defined as the composition of the following canonical morphisms
$$
\Hom\bigl(\kF, \kG(x)\bigr) \lar \Hom\bigl(\kF \otimes \CC_y, \kG(x) \otimes \CC_y\bigr)
\stackrel{\cong}\lar  \Lin\bigl(\kF\big|_y, \; \kG\big|_y\bigr)
$$
is called \emph{evaluation map}.
\end{definition}

\begin{lemma}
Let $\kF$ and $\kG$ be a pair of vector bundles on $E$ and $x, y \in E$ be a pair of distinct
points such that $\Hom\bigl(\kF(y), \kG(x)\bigr) = 0 = \Ext^1\bigl(\kF(y), \kG(x)\bigr)$.
Then the evaluation map $\ev^{\kF, \, \kG(x)}_y$ is an isomorphism.
\end{lemma}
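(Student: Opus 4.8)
The plan is to mimic the argument behind Lemma \ref{L:ResidueisIso}, replacing the short exact sequence (\ref{E:residue}) by the one obtained by restricting $\kG(x)$ to the fibre over $y$. Concretely, tensoring the structure sequence $0 \to \kO_E(-y) \to \kO_E \to \CC_y \to 0$ of the point $y$ with the locally free sheaf $\kG(x)$ yields a short exact sequence
$$
0 \lar \kG(x-y) \lar \kG(x) \lar \kG(x) \otimes \CC_y \lar 0
$$
on $E$, where we used the canonical identification $\kG(x) \otimes \kO_E(-y) \cong \kG(x - y)$; since $x \ne y$ there is moreover a canonical identification $\kG(x) \otimes \CC_y \cong \kG\big|_y$. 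Applying the functor $\Hom(\kF, -)$ produces the long exact sequence
$$
0 \lar \Hom\bigl(\kF, \kG(x-y)\bigr) \lar \Hom\bigl(\kF, \kG(x)\bigr) \stackrel{\alpha}\lar \Hom\bigl(\kF, \kG\big|_y\bigr) \lar \Ext^1\bigl(\kF, \kG(x-y)\bigr).
$$

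Next I would identify the map $\alpha$ with the evaluation morphism. Since $\kG\big|_y$ is supported at $y$, every homomorphism $\kF \to \kG\big|_y$ factors uniquely through $\kF \otimes \CC_y = \kF\big|_y$, giving a canonical isomorphism $\Hom\bigl(\kF, \kG\big|_y\bigr) \cong \Lin\bigl(\kF\big|_y, \kG\big|_y\bigr)$. Under this isomorphism, $\alpha$ becomes the composition of $f \mapsto f \otimes \id_{\CC_y}$ with the canonical isomorphism $\Hom\bigl(\kF \otimes \CC_y, \kG(x) \otimes \CC_y\bigr) \cong \Lin\bigl(\kF\big|_y, \kG\big|_y\bigr)$; this is precisely the map $\ev^{\kF, \, \kG(x)}_y$ by definition. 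The verification uses only the commutativity of the naturality square for the natural transformation $\id \Rightarrow (-) \otimes \CC_y$ and is completely routine.

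Finally, I would compute the two outer terms of the long exact sequence. Tensoring with the line bundle $\kO_E(-y)$ is an exact autoequivalence of $\Coh(E)$, so
$$
\Hom\bigl(\kF, \kG(x - y)\bigr) \cong \Hom\bigl(\kF(y), \kG(x)\bigr), \qquad \Ext^1\bigl(\kF, \kG(x-y)\bigr) \cong \Ext^1\bigl(\kF(y), \kG(x)\bigr),
$$
and both right-hand sides vanish by hypothesis. Hence in the long exact sequence above the first and the fourth terms are zero, so $\alpha = \ev^{\kF, \, \kG(x)}_y$ is an isomorphism, as claimed. The only point requiring any care is the identification of the connecting map $\alpha$ with the evaluation map in the second step; everything else is a formal consequence of the long exact sequence, exactly as in the proof of Lemma \ref{L:ResidueisIso}.
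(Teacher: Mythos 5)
Your argument is exactly the paper's: tensor the structure sequence of $y$ with $\kG(x)$, apply $\Hom(\kF,-)$, identify the resulting map with $\ev^{\kF,\,\kG(x)}_y$, and kill the outer terms via the identifications $\Hom\bigl(\kF,\kG(x-y)\bigr)\cong\Hom\bigl(\kF(y),\kG(x)\bigr)$ and $\Ext^1\bigl(\kF,\kG(x-y)\bigr)\cong\Ext^1\bigl(\kF(y),\kG(x)\bigr)$. The proposal is correct and matches the paper's proof, being only slightly more explicit about the identification of the middle map with the evaluation morphism.
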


\begin{proof}
Consider the short exact sequence
\begin{equation}\label{E:evaluat}
0 \lar \kO(-y) \lar \kO \stackrel{\underline{\ev}_y}\lar \CC_y \lar 0.
\end{equation}
It induces a short exact sequence of coherent sheaves
$$
0 \lar  \kG(x-y) \lar  \kG(x)
\xrightarrow{\mathbbm{1} \otimes \underline{\ev}_y}
\kG(x) \otimes \CC_y \to 0.
$$
Using the vanishing $\Hom\bigl(\kF, \kG(x-y)\bigr) = 0 = \Ext\bigl(\kF, \kG(x-y)\bigr)$, we get
an isomorphism  $\Hom\bigl(\kF, \kG(x)\bigr) \rightarrow \Hom\bigl(\kF, \kG(x) \otimes \CC_y\bigr)$.
It remains to observe that $\ev^{\kF, \, \kG(x)}_y$ is the composition of the following canonical isomorphisms:
$$
\Hom\bigl(\kF, \kG(x)\bigr) \lar \Hom\bigl(\kF, \kG(x) \otimes \CC_y\bigr)
\lar \Hom\bigl(\kF, \kG \otimes \CC_y\bigr) \lar
\Lin\bigl(\kF\big|_y, \, \kG\big|_y\bigr).
$$
\end{proof}

\begin{lemma}\label{L:easyfact}
For $B \in \GL_n(\CC)$ and $v \in \CC$ we set $\kF_v =
\kE\bigl(\exp(2\pi i v) B\bigr) \cong \kE(B) \otimes \kL_v$.
Then for any $v_1, v_2 \in \CC$ and $y \in E$ we have:
$$
\mathrm{dim}_\CC\bigl(\Hom\bigl(\kF_{v_1}, \kF_{v_2}(y)\bigr)\bigr) = n^2.
$$
\end{lemma}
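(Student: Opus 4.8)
The plan is to compute the dimension of $\Hom\bigl(\kF_{v_1}, \kF_{v_2}(y)\bigr)$ via Riemann--Roch together with a vanishing statement for $\Ext^1$. First observe that $\kF_{v_2}(y) \cong \kE\bigl(\exp(2\pi i v_2)B\bigr) \otimes \kO_E([y])$, which is a vector bundle of rank $n$ and degree $n$ on $E$ (since $\kE(B)$ has degree zero by Theorem \ref{T:BundlesTorus}, tensoring with $\kL_{v_2}$ keeps the degree zero, and tensoring with $\kO_E([y])$ raises the degree by $n$). The sheaf $\mathcal{H}om\bigl(\kF_{v_1}, \kF_{v_2}(y)\bigr) = \kF_{v_1}^\vee \otimes \kF_{v_2}(y)$ therefore has rank $n^2$ and degree $n\cdot\deg\bigl(\kF_{v_2}(y)\bigr) - n\cdot\deg(\kF_{v_1}) = n\cdot n - n\cdot 0 = n^2$. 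By Riemann--Roch on the genus one curve $E$,
$$
\dim_\CC\Hom\bigl(\kF_{v_1},\kF_{v_2}(y)\bigr) - \dim_\CC\Ext^1\bigl(\kF_{v_1},\kF_{v_2}(y)\bigr) = \deg\bigl(\kF_{v_1}^\vee \otimes \kF_{v_2}(y)\bigr) = n^2.
$$

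So it remains to show $\Ext^1\bigl(\kF_{v_1}, \kF_{v_2}(y)\bigr) = 0$. By Serre duality on $E$ (where $\Omega_E \cong \kO_E$), this group is dual to $\Hom\bigl(\kF_{v_2}(y), \kF_{v_1}\bigr)$, so it suffices to prove that the latter vanishes. The bundle $\kF_{v_1} = \kE\bigl(\exp(2\pi i v_1)B\bigr)$ is semi-stable of slope $0$ by Theorem \ref{T:BundlesTorus}(3), while $\kF_{v_2}(y)$ has slope $1 > 0$; hence every Jordan--Hölder quotient of $\kF_{v_2}(y)$ has slope $1$ and every quotient of $\kF_{v_1}$ has slope $0$. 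Since a nonzero homomorphism between semi-stable bundles can only go from lower-or-equal slope to higher-or-equal slope in the appropriate direction, and here a map $\kF_{v_2}(y) \to \kF_{v_1}$ would have to send a slope-$1$ semi-stable bundle into a slope-$0$ semi-stable bundle, it must be zero. (Concretely: the image would be a quotient of $\kF_{v_2}(y)$, hence of slope $\geq 1$, and simultaneously a subsheaf of the semi-stable slope-$0$ bundle $\kF_{v_1}$, hence of slope $\leq 0$ — a contradiction unless the image is zero.)

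The only mildly delicate point is keeping the degree bookkeeping straight; the genuine content — the vanishing of $\Ext^1$ — reduces immediately to the slope inequality for semi-stable bundles, so I expect no real obstacle. An alternative, more in the spirit of the explicit automorphy-factor computations elsewhere in the paper, would be to identify $\Hom\bigl(\kF_{v_1}, \kF_{v_2}(y)\bigr)$ with the space $\Sol$ of matrix-valued theta-type functions $\Phi$ as in Theorem \ref{T:AutomFactors}(2) (using Lemma \ref{L:LBdegone} to express $\kO_E([y])$ via an automorphy factor) and count solutions directly; but the Riemann--Roch argument above is shorter and self-contained, so that is the route I would take.
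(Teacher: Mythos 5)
Your proposal is correct and follows essentially the same route as the paper: Serre duality reduces the vanishing of $\Ext^1\bigl(\kF_{v_1},\kF_{v_2}(y)\bigr)$ to the vanishing of $\Hom\bigl(\kF_{v_2}(y),\kF_{v_1}\bigr)$, which holds because $\kF_{v_2}(y)$ is semi-stable of slope one while $\kF_{v_1}$ is semi-stable of slope zero, and then Riemann--Roch gives the dimension $n^2$. You spell out the slope inequality and the degree bookkeeping in more detail than the paper does, but the argument is the same.
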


\begin{proof}
The vector bundle $\kF_{v_2}(y)$ is semi--stable of slope one.
Hence, we have:
$$
\Ext^1\bigl(\kF_{v_1}, \kF_{v_2}(y)\bigr) \cong
\Hom\bigl(\kF_{v_2}(y), \kF_{v_1}\bigr)^* = 0.
$$
Thus,   the  statement of Lemma is a consequence of  the Riemann--Roch formula.
\end{proof}

\begin{corollary}\label{C:dim-of-Sol} For any $v, y \in \CC$ the dimension of the complex vector space
\begin{equation}
\Sol = \mathsf{Sol}_{B, \, v,  \, y, \,   \tau}
 :=
\left\{\Phi: \CC \lar \Mat_{n \times n}(\CC)
\left|
\begin{array}{l}
\Phi \mbox{\rm{\quad is holomorphic}} \\
\Phi(z+1) = \Phi(z) \\
\Phi(z+\tau) B = e(z) B \Phi(z)
\end{array}
\right.
  \right\}
\end{equation}
is $n^2$, where $e(z) = e(z, v, y, \tau) =  - \exp\bigl(-2 \pi i (z + v - y + \tau)\bigr)$.
\end{corollary}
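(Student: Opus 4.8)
The plan is to deduce Corollary~\ref{C:dim-of-Sol} from Lemma~\ref{L:easyfact} via the dictionary between automorphy factors and vector bundles furnished by Theorem~\ref{T:AutomFactors}. First I would rewrite the space $\Sol = \Sol_{B,\,v,\,y,\,\tau}$ as a $\Hom$-space between two explicitly described vector bundles on $E=\CC/\Lambda$. The defining relation $\Phi(z+\tau)B = e(z)B\Phi(z)$ with $e(z) = -\exp(-2\pi i(z+v-y+\tau))$ should be compared with the relation in part (2) of Theorem~\ref{T:AutomFactors}: an element of $\Sol_{A_1,A_2}$ satisfies $\Phi(z+\tau)A_1(z) = A_2(z)\Phi(z)$. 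So I would take $A_1(z) = B$ (the constant automorphy factor attached to $\kE(B) = \kF_0$) and choose $A_2(z)$ so that $A_2(z)\Phi(z) = e(z)B\Phi(z)$, i.e.\ $A_2(z) = e(z)B = -\exp(-2\pi i(z+v-y+\tau))\,B$. Thus $\Sol \cong \Hom\bigl(\kE(B),\,\kE(e(z)B)\bigr)$ as complex vector spaces.

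The second step is to identify $\kE(e(z)B)$ with a twist of $\kF_{v}$ by $\kO_E([y])$. Using $\kE(A\otimes B)\cong\kE(A)\otimes\kE(B)$ from Theorem~\ref{T:AutomFactors}(2), we may factor the scalar automorphy factor $e(z) = -\exp(-2\pi i(z+v-y+\tau))$. Writing $-\exp(-2\pi i(z+v-y+\tau)) = \exp(2\pi i v)\cdot\exp(2\pi i\tau)\cdot\bigl(-\exp(-\pi i\tau - 2\pi i(z + \tfrac{\tau+1}{2} - (y - \tfrac12)))\bigr)$ — the precise bookkeeping of the half-period shifts is the delicate part — the factor $\exp(2\pi i v)$ accounts for the twist $\kL_v$, the factor $\exp(2\pi i\tau)$ is absorbed by Corollary~\ref{C:AutomFactors} (since $\kE(A)\cong\kE(\widehat A)$ with $\widehat A(z) = \exp(2\pi i\tau)A(z)$), and the remaining scalar automorphy factor is exactly of the form $\varphi(z + \tfrac{\tau+1}{2} - x)$ appearing in Lemma~\ref{L:LBdegone}, with $x$ the image of $y$ in $E$, hence gives $\kO_E([y])$. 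Combining these, $\kE(e(z)B) \cong \kE(B)\otimes\kL_v\otimes\kO_E([y]) = \kF_v(y)$, where $\kF_v = \kE(\exp(2\pi i v)B)$ as in Lemma~\ref{L:easyfact}.

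The third step is immediate: by Lemma~\ref{L:easyfact} (applied with $v_1 = 0$, $v_2 = v$), $\dim_\CC\Hom\bigl(\kF_0,\kF_v(y)\bigr) = n^2$, and since $\kF_0 = \kE(B)$ this gives $\dim_\CC\Sol = n^2$. One should note that Lemma~\ref{L:easyfact} is stated for $y\in E$, i.e.\ implicitly requires interpreting $y$ modulo $\Lambda$; the dimension count is manifestly independent of which representative of $y+\Lambda$ one picks (and of whether $y\in\Lambda$, since the Riemann--Roch argument in Lemma~\ref{L:easyfact} only uses semistability of slope one and the vanishing of $\Ext^1$, both of which hold for $\kF_{v_2}(y)$ regardless of $y$), so the claim holds for all $v,y\in\CC$ as stated.

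The main obstacle I anticipate is purely the theta-function bookkeeping in the second step: matching the explicit scalar automorphy factor $-\exp(-2\pi i(z+v-y+\tau))$ against the normalized forms $\exp(2\pi i\lambda)$ (for line bundles of degree zero) and $\varphi(z+\tfrac{\tau+1}{2}-x) = \exp(-\pi i\tau - 2\pi i(z + \tfrac{\tau+1}{2} - x))$ (for $\kO_E([x])$) requires carefully tracking the additive shifts by $\tfrac{\tau+1}{2}$ and the overall sign/$q$-powers, and verifying that the leftover discrepancy is exactly a factor of the form $\exp(2\pi i k\tau)$ that Corollary~\ref{C:AutomFactors} allows us to discard. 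Everything else is a formal consequence of the results already established; no genuinely new input beyond Lemmas~\ref{L:easyfact} and~\ref{L:LBdegone} and Theorem~\ref{T:AutomFactors} is needed.
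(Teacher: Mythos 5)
Your proposal is correct and follows the same route as the paper: identify $\Sol$ with $\Hom\bigl(\kF_{v_1},\kF_{v_2}(y)\bigr)$ via Theorem \ref{T:AutomFactors} and Lemma \ref{L:LBdegone}, then invoke Lemma \ref{L:easyfact}. Your explicit factorization of the scalar automorphy factor is slightly off (one finds $e(z)=\exp(-2\pi i v)\,\varphi\bigl(z+\tfrac{\tau+1}{2}-y\bigr)$ directly, so the twist is by $\kL_{-v}$ and no extra $\exp(2\pi i\tau)$ or half-integer shift is needed), but as you note this bookkeeping does not affect the dimension count, which is insensitive to the sign of $v$.
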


\begin{proof}
By Theorem \ref{T:AutomFactors} and Lemma \ref{L:LBdegone}, we have an isomorphism of vector spaces
$\Sol \cong \Hom\bigl(\kF_{v_1}, \kF_{v_2}(y)\bigr)$, where $v = v_1 - v_2$.
Hence, by  Lemma \ref{L:easyfact},
 the dimension
of  $\Sol$ is $n^2$. Taking $y = 0 \in E$, we also recover  the first
part of Proposition \ref{P:preparationary}.
\end{proof}

\begin{theorem}\label{T:keythm}
 Let $B \in \GL_n(\CC)$ and  $\omega = \bar{\theta}'(\frac{1+\tau}{2}) dz \in H^0(\Omega_E)$. Let
 $U \subset \CC$ be a small neighborhood of $0$. Using the projection map
 $\pi: \CC \rightarrow E$, we identify $U$ with a small neighborhood of $\pi(0) \in E$. Then for
 all  $v_1, v_2; y_1, y_2 \in U$
 such that  $y_1 \ne y_2$   the following diagram of vector spaces is commutative:
\begin{equation*}
\begin{array}{c}
\xymatrix{
\Lin\bigl(\kF_{v_1}\big|_{y_1}, \kF_{v_2}\big|_{y_1}\bigr) \ar[d] & &
  \Hom\bigl(\kF_{v_1}, \kF_{v_2}(y_1)\bigr)
  \ar[ll]_-{\res^{\kF_{v_1}, \, \kF_{v_2}}_{y_1}(\omega)}
  \ar[rr]^-{\ev^{\kF_{v_1}, \,  \kF_{v_2}(y_1)}_{y_2}} \ar[d]
  & &
\Lin\bigl(\kF_{v_1}\big|_{y_2}, \kF_{v_2}\big|_{y_2}\bigr) \ar[d] \\
\Mat_{n \times n}(\CC) & & \Sol_{B, \, v, \, y_1, \,  \tau} \ar[ll]_{\res_{y_1}} \ar[rr]^{\ev_{y_2}}
& &  \Mat_{n \times n}(\CC),
}
\end{array}
\end{equation*}
where $v = v_1 - v_2$,
the middle vertical arrow is the isomorphism from Theorem \ref{T:AutomFactors}, whereas
the first and  the last vertical arrows are isomorphisms induced by  trivializations
$\gamma$  from Remark \ref{R:TrivialAutomFact}. The maps
$\res_{y_1}$ and $\ev_{y_2}$ are given by the   formulae:
$$
\res_{y_1}\bigl(\Phi(z)\bigr) = \Phi(y_1)  \quad \mbox{\rm and }
\ev_{y_2}\bigl(\Phi(z)\bigr) = \frac{1}{\bar{\theta}(y_2- y_1 + \frac{\tau+1}{2})} \Phi(y_2),
$$
where $\bar{\theta}(y)$ is the third  Jacobian theta--function.
\end{theorem}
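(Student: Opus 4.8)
The plan is to reduce the statement to a completely explicit computation on the universal cover $\CC$, where both the sheaf-theoretic residue/evaluation maps and the matrix-valued maps $\res_{y_1},\ev_{y_2}$ become transparent. The sheaf-theoretic side of the diagram is already known to be well-defined (the residue map, the evaluation map, and the isomorphism $\Hom(\kF_{v_1},\kF_{v_2}(y_1))\cong\Sol_{B,v,y_1,\tau}$ from Theorem \ref{T:AutomFactors}, Lemma \ref{L:LBdegone}, and Corollary \ref{C:dim-of-Sol}), so the entire content is the claim that, after transporting everything to $\Mat_{n\times n}(\CC)$ via the trivialization $\gamma$ from Remark \ref{R:TrivialAutomFact}, the two sheaf-maps become the naive formulae $\Phi\mapsto\Phi(y_1)$ and $\Phi\mapsto\bar\theta(y_2-y_1+\tfrac{\tau+1}{2})^{-1}\Phi(y_2)$.

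First I would fix the identifications explicitly. By Theorem \ref{T:AutomFactors}, an element of $\Hom(\kF_{v_1},\kF_{v_2}(y_1))$ corresponds to a holomorphic $\Phi\colon\CC\to\Mat_{n\times n}(\CC)$ satisfying the quasi-periodicity in $\Sol_{B,v,y_1,\tau}$; here one must write down the automorphy factor of $\kF_{v_2}(y_1)$ as the product of the automorphy factor $\exp(2\pi i v_2)B$ of $\kF_{v_2}$ with the automorphy factor $\varphi(z+\tfrac{\tau+1}{2}-y_1)$ of $\kO_E([y_1])$ supplied by Lemma \ref{L:LBdegone}, and check that this product times $(\exp(2\pi i v_1)B)^{-1}$ gives exactly the factor $e(z)=-\exp(-2\pi i(z+v-y_1+\tau))$ appearing in $\Sol$ (recall $v=v_1-v_2$). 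Next, the trivialization $\gamma_A$ of $\pi^*\kE(A)$ from Remark \ref{R:TrivialAutomFact} identifies the fibre $\kF_{v_i}|_y$ with $\CC^n$ via the distinguished lift, so the vertical maps in the diagram are literally "evaluate the $\Mat_{n\times n}(\CC)$-valued section at the chosen lift of $y$". With these conventions in place, the left square amounts to: the residue map $\res^{\kF_{v_1},\kF_{v_2}}_{y_1}(\omega)$, which is defined via the exact sequence \eqref{E:residue} and the differential form $\omega=\bar\theta'(\tfrac{1+\tau}{2})\,dz$, sends $\Phi$ to the honest residue at $z=y_1$ of the one-form $\Phi(z)\,\tfrac{dz}{\ell(z)}$, where $\ell$ is the canonical section of $\kO_E([y_1])$ cutting out the divisor $[y_1]$. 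One computes this residue: $\ell$ vanishes simply at $y_1$, and the normalization of $\omega$ by $\bar\theta'(\tfrac{1+\tau}{2})$ is precisely arranged so that $\omega/\ell$ has residue $1$ at $y_1$ — this is where the theta-constant enters and must be verified by expanding $\bar\theta$ near $\tfrac{1+\tau}{2}$. Hence the left square commutes with $\res_{y_1}(\Phi)=\Phi(y_1)$.

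For the right square I would argue similarly: the evaluation map $\ev^{\kF_{v_1},\kF_{v_2}(y_1)}_{y_2}$ is, after trivializing the fibre at $y_2$, the value at $y_2$ of the section of $\kF_{v_2}(y_1)$ represented by $\Phi$, but now one must compare the two different trivializations of the fibre $\kF_{v_2}|_{y_2}$ used in the source (the one coming from $\kF_{v_2}(y_1)$, i.e.\ including the local trivialization of $\kO_E([y_1])$ at $y_2$) versus the target (the bare $\kF_{v_2}$). That comparison factor is exactly the value at $y_2$ of the chosen local section $\ell$ of $\kO_E([y_1])$, which by Lemma \ref{L:LBdegone} and the explicit form of $\varphi$ works out to (a nonzero scalar multiple of) $\bar\theta(y_2-y_1+\tfrac{\tau+1}{2})$; dividing by it produces the stated formula for $\ev_{y_2}$. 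The normalizations of $\omega$ and of the evaluation are mutually compatible precisely because the same function $\bar\theta$ governs both the section of $\kO_E([y_1])$ and its derivative at the zero — so the constants cancel consistently.

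The main obstacle is purely one of bookkeeping rather than of ideas: one has to pin down, with no sign or scalar ambiguity, three things at once — the automorphy factor of $\kF_{v_2}(y_1)$ as a product, the canonical section $\ell$ of $\kO_E([y_1])$ together with its value at $y_2$ and its derivative at $y_1$, and the normalizing constant $\bar\theta'(\tfrac{1+\tau}{2})$ in $\omega$ — and check they fit together so that the residue is exactly $1$ and the comparison scalar is exactly $\bar\theta(y_2-y_1+\tfrac{\tau+1}{2})$. The restriction to $v_1,v_2,y_1,y_2\in U$ a small neighbourhood of $0$ is what lets me use a single connected component of $\pi^{-1}(U)$ and a single local section $\ell$ throughout, avoiding any transition-function subtleties; with that in hand the verification is a finite local computation with theta-functions of the type already used to prove Lemma \ref{L:LBdegone} in \cite{BK4}.
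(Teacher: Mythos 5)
Your proposal is correct and takes essentially the same route as the paper, whose own proof is a one-line deferral to the explicit theta-function computation of \cite[Section 8.2, Corollary 8.10]{BK4}. The three normalization points you isolate --- the automorphy factor of $\kF_{v_2}(y_1)$ reducing to $e(z)=-\exp(-2\pi i(z+v-y_1+\tau))$, the canonical section $\ell(z)=\bar{\theta}(z-y_1+\frac{\tau+1}{2})$ of $\kO_E([y_1])$ with $\ell'(y_1)=\bar{\theta}'(\frac{1+\tau}{2})$ and $\ell(y_2)=\bar{\theta}(y_2-y_1+\frac{\tau+1}{2})$, and the matching normalization of $\omega$ --- are exactly what that cited computation verifies.
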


\begin{proof}
The proof of this theorem is literally the same as the one given in
\cite[Section 8.2]{BK4}, see in particular \cite[Corollary 8.10]{BK4}.
\end{proof}

\begin{remark}\label{R:resmap-is-iso}
Let $v_1, v_2 \in \CC$ be such that $v_1 - v_2$ does not belong to the lattice
$\Sigma$. By Corollary \ref{C:nomorphdegzerobundles} we get
the vanishing $\Hom(\kF_{v_1}, \kF_{v_2}) = 0 = \Ext(\kF_{v_1}, \kF_{v_2})$.
Next,  Lemma \ref{L:ResidueisIso} implies that
 the morphism $\res^{\kF_{v_1}, \, \kF_{v_2}}_{y_1}(\omega)$
is an isomorphism. The commutativity of the left square  of the
diagram from  Theorem \ref{T:keythm}
implies   that the linear map $\res_{y_1}$ is an isomorphism, too. Setting $y_1 = 0$, we
obtain a proof of   the second part of Proposition \ref{P:preparationary}.
\end{remark}

\subsection{Triple Massey products and the associative Yang--Baxter equation}
In this subsection,  we give a proof of Theorem \ref{T:main}.

\begin{theorem}\label{T:thmcles} Let $B \in \GL_n(\CC)$, $v_1, v_2 \in \CC$ such that
$v = v_1 - v_2 \notin \Sigma$ and  $y_1, y_2 \in \CC$ such that $y_2 - y_1 \notin \Lambda$.
Consider the linear map $\tilde{r}_{B}(v_1, v_2; y_1, y_2): \Mat_{n \times n}(\CC)
\rightarrow \Mat_{n \times n}(\CC)$ defined via the commutative diagram
\begin{equation}\label{E:def-of-sol}
\begin{array}{c}
\xymatrix
{
\Mat_{n \times n}(\CC)  \ar[rr]^{\tilde{r}_{B}(v_1, v_2; \,  y_1, y_2)}  & & \Mat_{n \times n}(\CC) \\
& \Sol_{B, \, v, \, y_1, \,  \tau} \ar[ul]^{\res_{y_1}}
\ar[ur]_{\ev_{y_2}} & \\
}
\end{array}
\end{equation}
where $\res_{y_1}$ and $\ev_{y_2}$ are as in Theorem \ref{T:keythm}.
Let $r_{B}(v_1, v_2;  y_1,  y_2) \in \Mat_{n \times n}(\CC) \otimes \Mat_{n \times n}(\CC) $ be the tensor
corresponding to the linear map $\tilde{r}_{B}(v_1, v_2;  y_1, y_2)$ via the canonical isomorphism
of vector spaces
$$
\Mat_{n \times n}(\CC) \otimes \Mat_{n \times n}(\CC) \lar \Lin\bigl(\Mat_{n \times n}(\CC), \; \Mat_{n \times n}(\CC) \bigr),
$$
which
sends  a simple tensor $X \otimes Y$ to the linear map $Z \mapsto \Tr(XZ) Y$.
Then the  obtained  function of
 four variables
$$
r: \mathbb{C}^4_{(v_1, v_2; \,  y_1, y_2)}  \lar
\Mat_{n \times n}(\mathbb{C}) \otimes \Mat_{n \times n}(\mathbb{C})
$$
satisfies  the following version of the associative Yang--Baxter equation:
\begin{equation}\label{E:AYBE1}
r_B(v_1, v_2; y_1, y_2)^{12} r_B(v_1, v_3; y_2, y_3)^{23} =
r_B(v_1, v_3; y_1, y_3)^{13} r_B(v_3, v_2; y_1, y_2)^{12} +
\end{equation}
$$
+ r_B(v_2, v_3; y_2, y_3)^{23} r_B(v_1, v_2; y_1, y_3)^{13}.
$$
Moreover,  the tensor-valued function $r$ is unitary, i.e.~it satisfies the condition
\begin{equation}\label{E:AYBEunitary}
r_B(v_1, v_2; y_1, y_2)^{12} = - r_B(v_2, v_1; y_2, y_1)^{21}.
\end{equation}
\end{theorem}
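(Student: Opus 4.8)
The plan is to realize the linear map $\tilde r_B(v_1,v_2;y_1,y_2)$ as a triple Massey product in the derived category $D^b(\Coh(E))$ and then deduce the AYBE (\ref{E:AYBE1}) from the $A_\infty$-constraint $m_3\circ\bigl(m_3\otimes\mathbbm{1}\otimes\mathbbm{1}+\mathbbm{1}\otimes m_3\otimes\mathbbm{1}+\mathbbm{1}\otimes\mathbbm{1}\otimes m_3\bigr)=0$. First I would set up, for a pair of distinct points $y_1,y_2\in E$ and parameters $v_1,v_2$, the distinguished triangle coming from (\ref{E:residue}) tensored by $\kF_{v_2}$: writing $\kF_{v_i}=\kE(\exp(2\pi i v_i)B)$ as in Lemma \ref{L:easyfact}, the object $\kF_{v_2}(y_1)$ sits in $0\to\kF_{v_2}\to\kF_{v_2}(y_1)\to\kF_{v_2}\otimes\CC_{y_1}\to 0$. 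Applying $\Hom(\kF_{v_1},-)$ and using the vanishing $\Hom(\kF_{v_1},\kF_{v_2})=0=\Ext^1(\kF_{v_1},\kF_{v_2})$ (valid since $v=v_1-v_2\notin\Sigma$, by Corollary \ref{C:nomorphdegzerobundles}), the residue map $\res^{\kF_{v_1},\kF_{v_2}}_{y_1}(\omega)$ becomes an isomorphism $\Hom(\kF_{v_1},\kF_{v_2}(y_1))\xrightarrow{\ \sim\ }\Lin(\kF_{v_1}|_{y_1},\kF_{v_2}|_{y_1})$ by Lemma \ref{L:ResidueisIso}, and the evaluation map $\ev^{\kF_{v_1},\kF_{v_2}(y_1)}_{y_2}$ is the natural restriction to $y_2$. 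Hence $\tilde r_B(v_1,v_2;y_1,y_2)=\ev_{y_2}\circ\res_{y_1}^{-1}$ is exactly the composition $\Lin(\kF_{v_1}|_{y_1},\kF_{v_2}|_{y_1})\to\Hom(\kF_{v_1},\kF_{v_2}(y_1))\to\Lin(\kF_{v_1}|_{y_2},\kF_{v_2}|_{y_2})$, which is the recipe for the triple Massey product $\langle\, \kF_{v_1}|_{y_1}\leftarrow\kF_{v_1},\ \kF_{v_1}\to\kF_{v_2}(y_1),\ \kF_{v_2}(y_1)\to\kF_{v_2}|_{y_2}\,\rangle$ computed against the skyscraper objects $\CC_{y_1},\CC_{y_2}$. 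By Theorem \ref{T:keythm} this derived-categorical description matches the explicit $\res_{y_1},\ev_{y_2}$ on $\Sol_{B,v,y_1,\tau}$, so the two constructions of $r_B$ agree.

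Next I would invoke the general mechanism (as in \cite[Section 3--4]{BK4} and \cite{Polishchuk1}) by which triple Massey products in $D^b(\Coh(E))$ assemble into a (cyclic, minimal) $A_\infty$-structure on a generating subcategory, here spanned by the bundles $\kF_{v_i}$ and the skyscrapers $\CC_{y_j}$. The product $m_2$ on $\Ext$-spaces is composition, and $m_3$ on the relevant four-term sequences of morphisms recovers precisely $\tilde r_B$ after identifying $\Ext^0(\CC_{y},\kF_v(y'))$-type spaces with $\Lin(\kF_v|_y,\kF_v|_{y'})$ and transporting everything through the canonical isomorphism $\can$ sending $X\otimes Y$ to $Z\mapsto\Tr(XZ)Y$ (this is where the $\Mat\otimes\Mat\cong\Lin(\Mat,\Mat)$ identification and the trace pairing on $\Ext^*$ enter). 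The $A_\infty$-associativity constraint $\sum m_3(\cdots m_3\cdots)=0$, applied to a suitable composable string of six morphisms involving three line-twisted bundles $\kF_{v_1},\kF_{v_2},\kF_{v_3}$ and three skyscrapers $\CC_{y_1},\CC_{y_2},\CC_{y_3}$, then unwinds — after bookkeeping the embeddings $\rho_{12},\rho_{13},\rho_{23}$ and the sign conventions — into exactly equation (\ref{E:AYBE1}). Here one uses that the ``mixed'' terms of the constraint that would involve $m_2\circ m_3$ or $m_3\circ m_2$ contributions collapse because the relevant $\Ext$-groups between non-isomorphic skyscrapers, and between skyscrapers and semistable bundles of the wrong slope, vanish; the surviving terms are the three products of two $m_3$'s, matching the three summands of the AYBE.

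For the unitarity identity (\ref{E:AYBEunitary}), I would use the cyclic symmetry of the $A_\infty$-structure: Serre duality on $E$ (which has trivial canonical bundle, so $\Ext^i(\kG,\kH)\cong\Ext^{1-i}(\kH,\kG)^*$) endows the minimal model with a cyclic inner product of degree $-1$, and the triple Massey product $m_3$ is graded-cyclically symmetric with respect to it. Concretely, $m_3$ computed for the string read in the opposite order — which swaps $(v_1,y_1)\leftrightarrow(v_2,y_2)$ and the roles of $\res$ and $\ev$ — equals, up to the permutation $\rho$ of tensor factors and a sign dictated by the cyclic structure, the original $m_3$. Translating through $\can$ and the trace pairing, this is precisely $r_B(v_1,v_2;y_1,y_2)^{12}=-r_B(v_2,v_1;y_2,y_1)^{21}$. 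A clean alternative, if the cyclic-symmetry input is heavier than desired, is a direct check: the pairing $\langle \Phi,\Psi\rangle$ between $\Sol_{B,v,y_1,\tau}$ and $\Sol_{B,-v,y_2,\tau}$-type spaces given by a residue at a point is symmetric, and comparing $\res_{y_1}^{-1}$ against $\ev_{y_2}$ with their swapped counterparts via this pairing yields (\ref{E:AYBEunitary}) together with the extra anti-symmetry $\sigma(-u,-x)=-\sigma(u,x)$-type relation at the level of the Kronecker function.

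The main obstacle I anticipate is the second step: correctly identifying which composable six-morphism configuration in $D^b(\Coh(E))$ to feed into the $A_\infty$-constraint, and then chasing all the signs and the three tensor-embedding conventions $\rho_{12},\rho_{13},\rho_{23}$ so that the output is literally (\ref{E:AYBE1}) and not a cyclically permuted or sign-twisted variant. The bundle-theoretic inputs — the dimension count ($n^2$, from Lemma \ref{L:easyfact} and Riemann--Roch), the vanishing of the obstructing $\Ext$-groups (Corollary \ref{C:nomorphdegzerobundles}), and the isomorphism properties of $\res$ and $\ev$ — are already in place, so the analytic content is under control; the remaining work is the homological-algebra translation, which is where \cite[Theorem 6.1 / Section 4]{BK4} does the heavy lifting and which I would follow closely, indicating only the points where the presence of higher-dimensional $\Sol$-spaces (rather than the rank-one case) changes nothing essential because all maps are morphisms of the ambient $\Mat_{n\times n}(\CC)$-modules.
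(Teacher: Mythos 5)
Your proposal is correct and follows essentially the same route as the paper: it interprets $\tilde r_B$ via the residue/evaluation diagram as a triple Massey product (the paper's diagram (\ref{E:main-diagram}) together with \cite[Theorem 4.17]{BK4} and \cite[Theorem 4]{Polishchuk1}), derives (\ref{E:AYBE1}) from the $A_\infty$-constraint $m_3 \circ (m_3 \otimes \mathbbm{1} \otimes \mathbbm{1} + \mathbbm{1} \otimes m_3 \otimes \mathbbm{1} + \mathbbm{1} \otimes \mathbbm{1} \otimes m_3) = 0$ exactly as in the paper's citation of \cite[Theorem 1]{Polishchuk1}, and obtains unitarity from the cyclic $A_\infty$-structure induced by Serre duality. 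The only cosmetic difference is that the paper makes the Serre-duality dualization $\Ext^1(\CC_{y_1},\kF_2)^* \cong \Hom(\kF_2,\CC_{y_1})$ and the resulting reformulation $\widetilde{m}^{\kF_1,\kF_2}_{y_1,y_2}$ fully explicit before passing to the trivializations, which you fold into the phrase ``transporting through $\can$ and the trace pairing.''
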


\begin{proof} As above, for $v \in \CC$ we set $\kF_v := \kE\bigl(\exp(2 \pi i v) B\bigr)
\cong \kE(B) \otimes \kL_v$. We split  the proof into  the following logical steps.

\medskip
\noindent
$\bullet$ By Corollary \ref{C:nomorphdegzerobundles},  for any $v_1, v_2 \in \CC$ such that
$v = v_1 - v_2 \notin \Sigma$ we have
\begin{equation*}
\Hom(\kF_{v_1}, \kF_{v_2}) = 0 =
\Ext^1(\kF_{v_1}, \kF_{v_2}).
\end{equation*}
For  simplicity of notation we write $\kF_i$ for $\kF_{v_i}$, $i = 1, 2$.
The following linear
map
$$
m_3 = m^{\kF_1, \kF_2}_{y_1, y_2}:  \Hom(\kF_1, \CC_{y_1}) \otimes \Ext^1(\CC_{y_1}, \kF_2)
\otimes \Hom(\kF_2, \CC_{y_2}) \lar
\Hom(\kF_1, \CC_{y_2}),
$$
called \emph{triple Massey product}, is defined as follows.

Let $a \in \Ext^1(\CC_{y_1}, \kF_2)$,
$g \in \Hom(\kF_1, \CC_{y_1})$, $f \in \Hom(\kF_2, \CC_{y_2})$ and
$
0 \rightarrow  \kF_2 \stackrel{\alpha}\rightarrow \kA \stackrel{\beta}\rightarrow  \CC_{y_1} \rightarrow  0
$
be an extension  representing  the element $a$.
The vanishing of $\Hom(\kF_1, \kF_2)$ and $\Ext^1(\kF_1, \kF_2)$ implies that
we can uniquely lift the morphisms $g$ and $f$ to morphisms
 $\tilde{g}: \kF_1 \rightarrow  \kA$ and $\tilde{f}: \kA \rightarrow  \CC_{y_2}$ such that
 $\beta \tilde{g} = g$ and $\tilde{f} \alpha = f$.  So, we
obtain the following commutative  diagram
$$
\xymatrix{
         &              &           & \kF_1 \ar[d]^g  \ar[dl]_{\tilde g}    &  \\
a: \;  0 \ar[r] & \kF_2 \ar[r]^\alpha \ar[d]_f & \kA \ar[r]^\beta \ar[dl]^{\tilde f}
& \CC_{y_1} \ar[r] &  0 \\
         & \CC_{y_2}    &           &                  & \\
}
$$
and the triple Massey product is defined as  $m_3(g\otimes a \otimes f) =
\tilde{f}\tilde{g}$.

\medskip
\noindent
$\bullet$
By the Serre duality, we have  $\Ext^1(\CC_{y_1}, \kF_2)^* \cong \Hom(\kF_2, \CC_{y_1})$. Let
\begin{equation}\label{E:newFormMassey}
\widetilde{m}^{\kF_1, \kF_2}_{y_1, y_2}:
\Hom(\kF_1, \CC_{y_1}) \otimes \Hom(\kF_2, \CC_{y_2}) \lar
\Hom(\kF_2, \CC_{y_1}) \otimes
\Hom(\kF_1, \CC_{y_2})
\end{equation}
be the image of
 $m^{\kF_1, \kF_2}_{y_1, y_2}$
under the canonical isomorphism of vector spaces
$$
\Lin\bigl(\Hom(\kF_1, \CC_{y_1}) \otimes \Ext^1(\CC_{y_1}, \kF_2)
\otimes \Hom(\kF_2, \CC_{y_2}),
\Hom(\kF_1, \CC_{y_2})\bigr)
\cong
$$
$$
\Lin\bigl(\Hom(\kF_1, \CC_{y_1}) \otimes \Hom(\kF_2, \CC_{y_2}),
\Hom(\kF_2, \CC_{y_1}) \otimes
\Hom(\kF_1, \CC_{y_2})\bigr).
$$
By \cite[Theorem 1]{Polishchuk1} the linear map
$\widetilde{m}^{\kV_1, \kF_2}_{y_1, y_2}$ satisfies
the following ``triangle equation''
\begin{equation}\label{E:yb1}
(\widetilde{m}^{\kF_3, \kF_2}_{y_1, y_2})^{12} (\widetilde{m}^{\kF_1, \kF_3}_{y_1, y_3})^{13} -
(\widetilde{m}^{\kF_1, \kF_3}_{y_2, y_3})^{23} (\widetilde{m}^{\kF_1, \kF_2}_{y_1, y_2})^{12} +
(\widetilde{m}^{\kF_1, \kF_2}_{y_1, y_3})^{13} (\widetilde{m}^{\kF_2, \kF_3}_{y_2, y_3})^{23} = 0.
\end{equation}
Both sides of the equality (\ref{E:yb1}) are viewed  as linear maps
$$
\Hom(\kF_1, \CC_{y_1}) \otimes \Hom(\kF_2, \CC_{y_2})  \otimes
\Hom(\kF_3, \CC_{y_3}) \lar
$$
$$
\lar \Hom(\kF_2, \CC_{y_1}) \otimes \Hom(\kF_3, \CC_{y_2})  \otimes
\Hom(\kF_1, \CC_{y_3}).
$$
Moreover, the tensor $\widetilde{m}^{\kF_1, \kF_2}_{y_1, y_2}$ is
non-degenerate and skew-symmetric:
\begin{equation}\label{E:unitarity}
\rho(\widetilde{m}^{\kF_1, \kF_2}_{y_1, y_2}) = - \widetilde{m}^{\kF_2, \kF_1}_{y_2, y_1},
\end{equation}
where $\rho$ is the isomorphism $$\Hom(\kF_1, \CC_{y_1}) \otimes
\Hom(\kF_2, \CC_{y_2}) \lar
\Hom(\kF_2, \CC_{y_2}) \otimes \Hom(\kF_1, \CC_{y_1})$$
 given by the rule
$\rho(f\otimes g) = g \otimes f$.

\medskip
\noindent
$\bullet$
The   \emph{idea of the proof} of the relation (\ref{E:yb1}) is the following.
 Since the derived category
$D^b\bigl(\Coh(E)\bigr)$ has a structure of an  $A_\infty$--category,
we have the
equality:
\begin{equation}\label{E:AinftyConstraint}
m_3 \circ (m_3 \otimes \mathbbm{1} \otimes \mathbbm{1} +
\mathbbm{1} \otimes m_3 \otimes \mathbbm{1} + \mathbbm{1} \otimes
\mathbbm{1} \otimes m_3) = 0,
\end{equation}
where both sides are  viewed as linear operators mapping the tensor product
$$
\Hom(\kF_1, \CC_{y_1}) \otimes \Ext^1(\CC_{y_1}, \kF_2) \otimes \Hom(\kF_2, \CC_{y_2}) \otimes \Ext^1_E(\CC_{y_2}, \kF_3) \otimes \Hom(\kF_3, \CC_{y_3})
$$
to the vector space $\Hom(\kF_1, \CC_{y_3})$. In other words, the Yang--Baxter relation
(\ref{E:yb1}) is just a translation of the
$A_\infty$--constraint (\ref{E:AinftyConstraint}). Similarly, the unitarity property
(\ref{E:unitarity}) of $\widetilde{m}^{\kF_1, \kF_2}_{y_1, y_2}$
is a consequence of existence of a cyclic $A_\infty$--structure on
$D^b\bigl(\Coh(E)\bigr)$, see \cite[Section 1]{Polishchuk1} for more   details.

\medskip
\noindent
$\bullet$
Consider the linear map
$$\tilde{r}^{\kF_1, \kF_2}_{y_1, y_2}:
\Lin\bigl(\kF_1\big|_{y_1}, \kF_2\big|_{y_1}\bigr)
\lar
\Lin\bigl(\kF_1\big|_{y_2}, \kF_2\big|_{y_2}\bigr)
$$
defined  by the following commutative diagram of vector spaces:
\begin{equation}\label{E:main-diagram}
\begin{array}{c}
\xymatrix@C-C@R+3mm
{
& \Hom\bigl(\kF_1, \kF_2(y_1)\bigr)
\ar[ld]_{\res_{y_1}^{\kF_1, \,  \kF_2}(\omega)}
\ar[rd]^{\ev_{y_2}^{\kF_1, \, \kF_2(y_1)}} &
\\
\Lin\bigl(\kF_1\big|_{y_1}, \kF_2\big|_{y_1}\bigr)
\ar[rr]^{\tilde{r}^{\kF_1, \kF_2}_{y_1, y_2}} & &
\Lin\bigl(\kF_1\big|_{y_2}, \kF_2\big|_{y_2}\bigr).
}
\end{array}
\end{equation}
Then
$
\tilde{r}^{\kF_1, \kF_2}_{y_1, y_2}$ is the image of
 $\widetilde{m}^{\kF_1, \kF_2}_{y_1, y_2}$ under the canonical isomorphism
of vector spaces
$$
\Lin\bigl(\Hom(\kF_1, \CC_{y_1}) \otimes \Hom(\kF_2, \CC_{y_2}),
\Hom(\kF_2, \CC_{y_1}) \otimes
\Hom(\kF_1, \CC_{y_2})\bigr) \cong
$$
$$
\Lin\Bigl(
\Lin\bigl(\kF_1\big|_{y_1}, \kF_2\big|_{y_1}\bigr), \;
\Lin\bigl(\kF_1\big|_{y_2}, \kF_2\big|_{y_2}\bigr)\Bigr),
$$
see  \cite[Theorem 4.17]{BK4} and \cite[Theorem 4]{Polishchuk1}.

\medskip
\noindent
$\bullet$ Let $A$  be an arbitrary automorphy factor, $\kF = \kE(A)$ and $y \in {E}$. Then
we   have an isomorphism of vector spaces
$$
\gamma(A,  y): \Hom(\kF, \CC_y)
\lar \Hom(\kF\otimes \CC_y, \CC_y)
\lar \kF\big|^{*}_{y}  \lar \CC^n,
$$
induced by  the  trivialization $\gamma_A$ from Remark \ref{R:TrivialAutomFact}.
For any $v \in \CC$ we denote  by $\gamma(v, y)$ the isomorphism
$\Hom(\kF_v, \CC_y) \rightarrow  \CC^n$.
We obtain a linear map $\bar{r}_B(v_1, v_2; y_1, y_2)$,
defined by the following commutative diagram of vector spaces:
$$
\xymatrix
{
\Hom(\kF_{v_1}, \CC_{y_1}) \otimes \Hom(\kF_{v_2}, \CC_{y_2})
\ar[rr]^{\widetilde{m}^{\kF_1, \kF_2}_{y_1, y_2}}
\ar[d]_{\gamma(v_1, \,  y_1) \, \otimes \,  \gamma(v_2, \,  y_2)} & &
\Hom(\kF_{v_2}, \CC_{y_1}) \otimes \Hom(\kF_{v_1}, \CC_{y_2})
\ar[d]^{\gamma(v_2, \,  y_1) \, \otimes  \, \gamma(v_1, \,  y_2)} \\
\CC^{n} \otimes \CC^n \ar[rr]^{\bar{r}_B(v_1, v_2;\,  y_1, y_2)} & & \CC^{n} \otimes \CC^n.
}
$$

\medskip
\noindent
$\bullet$ Using the canonical isomorphism
$\Lin(\CC^n \otimes \CC^n, \,\CC^n \otimes \CC^n) \rightarrow \Mat_{n \times n}(\CC)
\otimes \Mat_{n \times n}(\CC)$, we end up with a tensor-valued meromorphic function
$$
\CC^2_{(v_1, v_2)} \times \CC^2_{(y_1, y_2)}
 \stackrel{r_B}\lar \Mat_{n \times n}(\CC) \otimes \Mat_{n \times n}(\CC),
$$
satisfying the Yang--Baxter equation (\ref{E:AYBE1}) and the unitarity condition
(\ref{E:AYBEunitary}). Moreover, for any $v_1, v_2; y_1, y_2 \in \CC$ such that
$v_1 - v_2 \notin \Sigma$ and $y_1 - y_2 \notin \Lambda$ the tensor
$r_B(v_1, v_2; y_1, y_2)$ coincides with the image of $
\tilde{r}^{\kF_1, \kF_2}_{y_1, y_2}$ under the composition of the canonical
isomorphism of vector spaces
$$
\Lin\Bigl(
\Lin\bigl(\kF_1\big|_{y_1}, \kF_2\big|_{y_1}\bigr), \;
\Lin\bigl(\kF_1\big|_{y_2}, \kF_2\big|_{y_2}\bigr)\Bigr)
\lar \Lin\bigl(\kF_2\big|_{y_1}, \kF_1\big|_{y_1}\bigr) \otimes
\Lin\bigl(\kF_1\big|_{y_2}, \kF_2\big|_{y_2}\bigr)\Bigr)
$$
with the isomorphism
$ \Lin\bigl(\kF_1\big|_{y_2}, \kF_2\big|_{y_2}\bigr)\Bigr) \lar
\Mat_{n \times n}(\CC) \otimes \Mat_{n \times n}(\CC)
$
induced by trivializations
$\gamma$ from Remark \ref{R:TrivialAutomFact}.
\end{proof}

\begin{remark}
 In order to prove that the function
$r = r_B(v_1, v_2; y_1, y_2)$ is actually holomorphic  with
an analytic  dependence on the entries of the matrix $B$,   we need again
the formalism of sheaves.  This will be done in Subsection \ref{SS:sheaves}. The reader, interested
in the actual solutions may go directly  to Section \ref{S:Kalkul}.
\end{remark}

\subsection{Remarks on the constructed solutions} In the previous subsection
we have seen how one can attach to a matrix $B \in \GL_n(\CC)$ a unitary solution
$$
\CC^2 \times \CC^2 \stackrel{r_B}\lar \Mat_{n \times n}(\CC) \otimes
\Mat_{n \times n}(\CC)
$$
of the associative  Yang--Baxter equation (\ref{E:AYBE1}), see  diagram (\ref{E:def-of-sol})
from Theorem \ref{T:thmcles}.

\begin{proposition}\label{P:transl-invariance}
For  general $v_1, v_2, u; y_1, y_2, x \in \CC$ we have the equality
$$r_B(v_1 + u, v_2 + u; \,  y_1 +x, y_2 + x) = r_B(v_1, v_2; \, y_1, y_2).
$$
In other words, the function $r_B(v_1, v_2; y_1, y_2)$ depends only on the differences
$v = v_1 - v_2$ and $y = y_2 - y_1$. In particular,
 the function $r_B(v,  y) = r_B(v_1, v_2; y_1, y_2)$ satisfies the
associative Yang--Baxter equation (\ref{E:AYBE}).
\end{proposition}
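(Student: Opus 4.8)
The plan is to trace the translation invariance through the geometric definition of $r_B$ via triple Massey products, rather than attempting a direct computation with the spaces $\Sol_{B, v, y_1, \tau}$. The crucial observation is that the tensor $r_B(v_1, v_2; y_1, y_2)$ was constructed (in the proof of Theorem \ref{T:thmcles}) out of the vector bundles $\kF_{v_1} = \kE(B)\otimes\kL_{v_1}$, $\kF_{v_2} = \kE(B)\otimes\kL_{v_2}$, the skyscraper sheaves $\CC_{y_1}$, $\CC_{y_2}$, and the trivializations $\gamma$ of Remark \ref{R:TrivialAutomFact}. Now tensoring a vector bundle by a fixed line bundle is an autoequivalence of $\Coh(E)$, and translation on $E$ by a point likewise induces an autoequivalence. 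These autoequivalences are compatible with the $A_\infty$-structure on $D^b(\Coh(E))$ (up to the scalars one must track), hence they carry the triple Massey product $m^{\kF_1,\kF_2}_{y_1,y_2}$ to the corresponding one for the translated/twisted data. So I would first set up the two autoequivalences $\Phi_1 = (\,\cdot\,)\otimes\kL_u$ and $\Phi_2 = t_x^*$ (pullback along translation by $x$) of $\Coh(E)$.

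Next I would verify that applying $\Phi_1$ sends the quintuple $(\kF_{v_1},\kF_{v_2},\CC_{y_1},\CC_{y_2})$ to $(\kF_{v_1+u},\kF_{v_2+u},\CC_{y_1},\CC_{y_2})$ and that applying $\Phi_2$ sends it to $(t_x^*\kF_{v_1}, t_x^*\kF_{v_2},\CC_{y_1+x},\CC_{y_2+x})$; here one uses $\kL_{v_i}\otimes\kL_u\cong\kL_{v_i+u}$ from Theorem \ref{T:BundlesTorus}(1), and that translation acts trivially on the isomorphism class of a degree-zero semistable bundle, so $t_x^*\kF_{v_i}\cong\kF_{v_i}$. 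Because $\Phi_1,\Phi_2$ are exact equivalences, they identify the extension spaces $\Ext^1(\CC_{y_1},\kF_2)$, the Hom-spaces $\Hom(\kF_i,\CC_{y_j})$, and all the lifting diagrams defining $m_3$, hence they identify the tensors $\widetilde m^{\kF_1,\kF_2}_{y_1,y_2}$ up to the change of basis induced on $\Hom(\kF_i,\CC_{y_j})$ by $\gamma$. The content of the Proposition is then precisely that the trivializations $\gamma$ are compatible with these two operations, i.e.~that the composite isomorphism $\Hom(\kF_{v_i},\CC_{y_j})\to\CC^n$ of Remark \ref{R:TrivialAutomFact} does not change when we replace the automorphy factor $\exp(2\pi i v_i)B$ by $\exp(2\pi i(v_i+u))B$ and simultaneously shift the point $y_j$ to $y_j+x$. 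Unwinding the definition of $\gamma_A$ (the canonical map $\pi^*\pi_*\kO_\CC^n\to\kO_\CC^n$), this reduces to the elementary fact that the explicit automorphy-factor description of $\kL_u$ and of the skyscraper at a shifted point match up on the universal cover $\CC$.

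Once the equality $r_B(v_1+u,v_2+u;y_1+x,y_2+x) = r_B(v_1,v_2;y_1,y_2)$ holds on the open locus where $v_1-v_2\notin\Sigma$ and $y_1-y_2\notin\Lambda$, it extends to all generic $(v_1,v_2,u;y_1,y_2,x)$ by the identity principle, since $r_B$ is meromorphic on $\CC^2\times\CC^2$ by Theorem \ref{T:thmcles}. Setting $u=-v_2$, $x=-y_1$ shows that $r_B$ depends only on $v=v_1-v_2$ and $y=y_2-y_1$, so we may write $r_B(v,y):=r_B(v_1,v_2;y_1,y_2)$. Substituting this dependence into the four-variable equation (\ref{E:AYBE1}) of Theorem \ref{T:thmcles}, with $v=v_1-v_2$, $v'=v_2-v_3$ and $x=y_2-y_1$, $y=y_3-y_2$ (so that $v_1-v_3=v+v'$, $y_3-y_1=x+y$), each term turns into the corresponding term of (\ref{E:AYBE}), yielding the two-variable associative Yang--Baxter equation. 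I expect the main obstacle to be the bookkeeping in the previous paragraph: checking that no stray scalar factor (coming from the non-canonical identifications $\kL_{v_i}\otimes\kL_u\cong\kL_{v_i+u}$, from Serre duality, or from the normalizing theta-factors $\bar\theta$ in $\ev$ and $\omega$ in $\res$) survives, since such a factor would spoil exact invariance. The cleanest way around this is to observe that $r_B$ is \emph{defined} as $\ev_{y_2}\circ\res_{y_1}^{-1}$ after the identifications of Theorem \ref{T:keythm}, so it suffices to check translation invariance of $\res_{y_1}$ and $\ev_{y_2}$ separately at the level of the function-theoretic formulas $\res_{y_1}(\Phi)=\Phi(y_1)$, $\ev_{y_2}(\Phi)=\bar\theta(y_2-y_1+\tfrac{\tau+1}{2})^{-1}\Phi(y_2)$ — and this last check is a direct, short computation with the defining relation $\Phi(z+\tau)B=e(z)B\Phi(z)$ of $\Sol$.
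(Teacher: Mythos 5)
Your proposal is correct and, in the form you settle on in your final paragraph, coincides with the paper's own proof: the authors simply note that $\Sol_{B,\,v,\,y_1,\,\tau}$, $\res_{y_1}$ and $\ev_{y_2}$ depend only on the difference $v_1-v_2$ and on $y_1, y_2$ (giving invariance in the first pair of variables), and then exhibit the shift map $t_x\bigl(\Phi(z)\bigr)=\Phi(z-x)$ as an isomorphism $\Sol_{B,\,v,\,y_1,\,\tau}\to\Sol_{B,\,v,\,y_1+x,\,\tau}$ intertwining $\res_{y_1}$ with $\res_{y_1+x}$ and $\ev_{y_2}$ with $\ev_{y_2+x}$, which is precisely the ``direct, short computation'' you describe. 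The geometric detour through autoequivalences and Massey products in your first two paragraphs is not needed and is not the route the paper takes.
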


\begin{proof}
Since the vector space $\Sol_{B, \, v, \, y_1,\, \tau}$ from Theorem \ref{T:keythm} only
depends on the difference $v = v_2 - v_1$, whereas $\res_{y_1}$ and $\ev_{y_2}$ depend only on $y_1$ and
 $y_2$, we have the equality
$r_B(v_1 + u, v_2 + u; y_1, y_2) = r_B(v_1, v_2; y_1, y_2)$.
To show the translation invariance of the function  $r_B$
with respect to the second pair of spectral variables note that we have the following
commutative diagram:
$$
\xymatrix
{
 & \Sol_{B, \, v,\, y_1, \, \tau} \ar[ld]_{\res_{y_1}} \ar[rd]^{\ev_{y_2}} \ar[dd]^{t_x} & \\
 \Mat_{n\times n}(\CC) & &  \Mat_{n\times n}(\CC) \\
 & \Sol_{B, \, v,\, y_1+x, \, \tau} \ar[lu]^{\res_{y_1+x}} \ar[ru]_{\ev_{y_2+x}}  &
}
$$
where $t_x\bigl(\Phi(z)\bigr) = \Phi(z-x)$. It proves that  $r_B(v_1, v_2; y_1 + x, y_2 + x) = r_B(v_1, v_2; y_1, y_2).$
\end{proof}

\begin{remark}
Proposition \ref{P:transl-invariance} implies that in order to compute the linear map $r_B(v, y)$
we can take
$y_1 = 0$ and $y_2 = y$
 in the commutative diagram (\ref{E:def-of-sol}). In particular,
the solution $r_B(v, y)$ can be computed using the  diagram (\ref{E:keydiagram}).
\end{remark}

\begin{proposition}\label{P:gauge-transf}
Let $B, S \in \GL_n(\CC)$ and $A := S^{-1} B S$. Then we have:
$$
r_A(v, y) = \bigl(S^{-1} \otimes S^{-1} \bigr) \; r_B(v, y)  \; \bigl(S \otimes S\bigr).
$$
\end{proposition}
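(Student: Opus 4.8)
The plan is to trace through the construction of $r_B$ and $r_A$ side by side, using the isomorphism $S: \kE(B) \to \kE(A)$ of vector bundles induced by the constant automorphy factor $S$, and to check that all the maps entering the definition of $r_A$ differ from those entering $r_B$ only by conjugation with $S$. Concretely, since $A = S^{-1} B S$ with $S$ a constant matrix, the map $\Phi \mapsto S^{-1} \Phi S$ gives an isomorphism $\Sol_{B, v, \tau} \to \Sol_{A, v, \tau}$ (one checks immediately that the three defining conditions are preserved: holomorphicity and $1$-periodicity are obvious, and $\Phi(z+\tau) B = e(z) B \Phi(z)$ becomes $(S^{-1}\Phi(z+\tau)S) A = S^{-1}\Phi(z+\tau)S S^{-1} B S = S^{-1} e(z) B \Phi(z) S = e(z) A (S^{-1}\Phi(z)S)$).

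First I would record how $\res_0$ and $\ev_y$ intertwine this isomorphism. For $\Phi \in \Sol_{B,v,\tau}$ and $\Psi := S^{-1}\Phi S \in \Sol_{A,v,\tau}$ we have $\res_0(\Psi) = \Psi(0) = S^{-1}\Phi(0)S = S^{-1}\res_0(\Phi)S$, and likewise $\ev_y(\Psi) = \frac{1}{\bar\theta(y + \frac{\tau+1}{2})}\Psi(y) = S^{-1}\ev_y(\Phi)S$ since the theta-factor is a scalar. Therefore the linear map $\tilde r_A(v,y) = \ev_y \circ \res_0^{-1}$ for $A$ satisfies
\begin{equation*}
\tilde r_A(v,y)(Z) = S^{-1}\,\tilde r_B(v,y)(S Z S^{-1})\, S \qquad \text{for all } Z \in \Mat_{n\times n}(\CC),
\end{equation*}
because $\res_0^{-1}$ for $A$ sends $Z$ to $S^{-1}\bigl(\res_0^{-1}$ for $B$ applied to $S Z S^{-1}\bigr)S$.

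The remaining step is a purely linear-algebra translation under the map $\can$. If $\tilde r_B(v,y) = \can(r_B(v,y))$ with $r_B(v,y) = \sum X_i \otimes Y_i$, then $\tilde r_B(v,y)(W) = \sum \Tr(X_i W) Y_i$, so
\begin{equation*}
\tilde r_A(v,y)(Z) = S^{-1}\Bigl(\sum_i \Tr(X_i S Z S^{-1}) Y_i\Bigr) S = \sum_i \Tr\bigl((S^{-1} X_i S)\, Z\bigr)\, (S^{-1} Y_i S),
\end{equation*}
using $\Tr(X_i S Z S^{-1}) = \Tr(S^{-1} X_i S Z)$. Hence $r_A(v,y) = \sum_i (S^{-1} X_i S) \otimes (S^{-1} Y_i S) = (S^{-1}\otimes S^{-1}) r_B(v,y)(S\otimes S)$, which is exactly the claimed identity, and the final assertion about gauge equivalence follows directly from \cite[Definition 2.5]{BK4}.

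I do not anticipate a serious obstacle here: the only point requiring a little care is bookkeeping the contravariance built into $\can$ (the trace pairing makes the first tensor factor transform by $X \mapsto S^{-1} X S$ rather than its inverse-transpose), so I would double-check the direction of conjugation by testing on simple tensors $e_{ij}$ before writing the final line. One could alternatively give a more conceptual argument: the isomorphism $\kE(B) \cong \kE(A)$ identifies the defining diagram \eqref{E:keydiagram} for $B$ with that for $A$ up to the trivialization change encoded by $S$, and $r_B(v,y)$ is manifestly functorial under isomorphisms of the bundle, being extracted from a triple Massey product in $D^b(\Coh(E))$; but the hands-on verification above is shortest and self-contained given Proposition \ref{P:transl-invariance}.
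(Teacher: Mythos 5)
Your proposal is correct and follows essentially the same route as the paper: the isomorphism $\Phi \mapsto S^{-1}\Phi S$ between $\Sol_B$ and $\Sol_A$ intertwining $\res_0$ and $\ev_y$ with conjugation by $S$, followed by the trace-pairing bookkeeping under $\mathsf{can}$ (which the paper packages as a commutative diagram and you verify directly on simple tensors). No gaps.
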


\begin{proof} For  simplicity of notation  we  denote $\Sol_B = \Sol_{B, \, v,\, y,\, \tau}$
and $r_B = r_B(v, y)$.
Observe  that we have an isomorphism of vector spaces
$\varphi_S: \Sol_{B} \rightarrow \Sol_{A}$ mapping a function $\Phi \in \Sol_B$ to
$S^{-1} \, \Phi \, S \in \Sol_A$.  We have a commutative diagram
\begin{equation}\label{E:diag-for-gauge}
\begin{array}{c}
\xymatrix{
\Mat_{n\times n}(\CC) \ar[d]_{c_S} & \Sol_{B} \ar[l]_-{\res_0} \ar[r]^-{\ev_y} \ar[d]^{\varphi_S} & \Mat_{n\times n}(\CC) \ar[d]^{c_S}\\
\Mat_{n\times n}(\CC)        & \Sol_{A} \ar[l]_-{\res_0} \ar[r]^-{\ev_y}         & \Mat_{n\times n}(\CC),
}
\end{array}
\end{equation}
where $c_S(X) = S^{-1} X S$. This
implies that for any $X \in \Mat_{n \times n}(C)$ we have:
$
\tilde{r}_{A}(S^{-1} X S) = S^{-1} \tilde{r}_B(X)  S.
$
The matrix $S$ defines the following linear automorphism
$$
\psi_S: \Lin\bigl(\Mat_{n \times n}(\CC), \; \Mat_{n \times n}(\CC) \bigr)
\lar
\Lin\bigl(\Mat_{n \times n}(\CC), \; \Mat_{n \times n}(\CC) \bigr)
$$
sending $l \in \Lin\bigl(\Mat_{n \times n}(\CC), \, \Mat_{n \times n}(\CC)\bigr)$ to the linear map
$X \xrightarrow{\psi_S(l)} S^{-1} \, l(S X S^{-1}) \, S$. Then we have:
$\psi_S(\tilde{r}_B) = \tilde{r}_A$.

Finally, let  $\mathsf{can}: \Mat_{n \times n}(\CC) \otimes \Mat_{n \times n}(\CC) \rightarrow
\Lin\bigl(\Mat_{n \times n}(\CC), \, \Mat_{n \times n}(\CC)\bigr)$ be the canonical
isomorphism of vector spaces mapping a simple tensor $X \otimes Y$ to the linear
map $Z \mapsto \Tr(XZ) Y$. Then the following diagram is commutative:
$$
\xymatrix
{
\Mat_{n \times n}(\CC) \otimes \Mat_{n \times n}(\CC) \ar[d]_{c_S \, \otimes \,  c_S}
\ar[rr]^-{\mathsf{can}} & & \Lin\bigl(\Mat_{n \times n}(\CC), \, \Mat_{n \times n}(\CC)\bigr)
\ar[d]^{\psi_S}\\
\Mat_{n \times n}(\CC) \otimes \Mat_{n \times n}(\CC) \ar[rr]^-{\mathsf{can}} & & \Lin\bigl(\Mat_{n \times n}(\CC), \, \Mat_{n \times n}(\CC)\bigr).
}
$$
But this implies that  $r_A(v, y) = \bigl(S^{-1} \otimes S^{-1} \bigr) \; r_B(v, y)  \; \bigl(S \otimes S\bigr)$.
\end{proof}

\subsection{Semi--universal family of degree zero semi-stable vector bundles on a complex torus
and the associative
Yang--Baxter equation}\label{SS:sheaves}
 In the previous subsections we have explained how
one can attach to a matrix $B \in \GL_n(\CC)$ a unitary solution $r_B(v, y)$ of the associative
Yang--Baxter equation (\ref{E:AYBE}). However, it still remains to be shown that
$r_B$ is a meromorphic  function in $v$ and $y$, holomorphic on
$(\CC \setminus \Sigma) \times (\CC \setminus \Lambda)$ and with an analytic dependence
on the matrix $B$. Although this fact can be verified   by a direct computation, we
prefer to give an abstract proof based on the technique of semi--universal families of semi--stable sheaves.

\medskip
\noindent
$\bullet$ Let $G = \GL_n(\CC)$ and $\kP \in \VB(E \times G)$ be defined as follows
$$\kP: =  \CC \times G \times \CC^n/\sim, \quad \mbox{\rm where} \quad
(z, g, v) \sim (z + 1, g, v) \sim (z + \tau, g, g \cdot v)$$
for all $(z, g, v) \in \CC \times G \times \CC^n$.
 Note that we have
a Cartesian diagram
$$
\xymatrix
{
(\CC \times G) \times \CC^n \ar[r] \ar[d]_{\mathrm{pr}_1} & \kP
\ar[d]\\
\CC \times G \ar[r]^{\pi \times \mathbbm{1}}  & E \times G,
}
$$
where $\pi: \CC \rightarrow \CC/\Lambda = E$ is the quotient  map.
Note that for any $g \in G$ we have an isomorphism
$\kP\big|_{E \times \{g\}} \cong \kE(g)$, where $\kE(g)$ is the semi--stable degree zero
vector bundle on $E$ determined  by the automorphy factor $g \in \GL_n(\CC)$. Thus, the
 constructed vector bundle $\kP$ is a \emph{semi--universal} family of degree zero
 semi-stable vector bundle on the torus $E$.

 \medskip
\noindent
$\bullet$ Let $J = \Pic^0(E)$ be the Jacobian of $E$. One can identify
$J$ with the torus $E$ using the following construction. Consider
the line bundle $\kL$ on $E \times E = \CC/\Lambda \times \CC/\Lambda$ defined as the quotient space
$\kL: =  (\CC \times \CC)  \times \CC/\sim$, where
$$
(z, w, v) \sim (z + 1, w, v) \sim (z, w+1, v) \sim (z, w + \tau , v) \sim
(z + \tau, w, \exp(2 \pi i w)v)
$$
for all $(z, w, v) \in (\CC \times \CC) \times \CC$. The constructed line bundle
$\kL$ is a universal family of degree zero vector bundles on $E$.

 \medskip
\noindent
$\bullet$
We denote  $X = E \times J \times J \times E \times E \times G$, $T = J \times J \times E \times E \times G$ and set
 $q: X \rightarrow T$ and $p: X \rightarrow E \times G$
to be the canonical projection maps.
Similarly, for $i = 1, 2$ we define
 $p_i: X \rightarrow E \times J$ and
 $h_i: T \rightarrow X$ to be given by the formulae
  $p_i(x, v_1, v_2, y_1, y_2, g) = (x, v_i)$ and
  $h_i(v_1, v_2, y_1, y_2, g) = (y_i, v_1, v_2, y_1, y_2, g)$. Note that
  $h_1$ and $h_2$ are sections of the canonical projection $q$.

\medskip
\noindent
$\bullet$ For $i = 1, 2$ we define $\kF_i := p^*\kP \otimes p_i^{*} \kL$.
Obviously, for any point $t= (v_1, v_2, y_1, y_2, g) \in T$ we have:
$
\kF_i\big|_{q^{-1}(t)} \cong \kP\big|_{E \times \{g\}} \otimes \kL\big|_{E \times \{v_i\}} \cong
\kE\bigl(\exp(2 \pi v_i) \cdot g\bigr).
$
\begin{lemma}\label{L:corGrauert}
The coherent sheaf $q_* {\mathcal Hom}_X(\kF_1, \kF_2)$ is supported on a proper
closed analytic subset of $T$.
\end{lemma}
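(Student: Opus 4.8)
The plan is to invoke Grauert's direct image theorem together with the analytic semicontinuity and base--change theorems, feeding in the vanishing statement of Corollary \ref{C:nomorphdegzerobundles}. First I would note that, since $\kF_1$ and $\kF_2$ are vector bundles on $X$, the sheaf $\kG := {\mathcal Hom}_X(\kF_1, \kF_2) \cong \kF_1^{\vee} \otimes \kF_2$ is again locally free on $X$, hence flat over $T$ along $q$; moreover $q: X = E \times T \to T$ is proper because the curve $E = \CC/\Lambda$ is compact. Grauert's theorem then gives that $q_*\kG$ is a coherent sheaf on $T$, so $\mathrm{Supp}(q_*\kG)$ is automatically a closed analytic subset of $T$. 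Since $T = J \times J \times E \times E \times G$ is connected (each factor is, and $G = \GL_n(\CC)$ is connected), it remains only to exhibit a single point $t_0 \in T$ with $(q_*\kG)_{t_0} = 0$; then $\mathrm{Supp}(q_*\kG)$ is a \emph{proper} closed analytic subset of $T$.

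Next I would identify the fibres of $q$. For $t = (v_1, v_2, y_1, y_2, g) \in T$ the fibre $q^{-1}(t)$ is canonically the curve $E$, and as $\kF_1$ is locally free one has $\kG|_{q^{-1}(t)} \cong {\mathcal Hom}_E\bigl(\kF_1|_{q^{-1}(t)}, \kF_2|_{q^{-1}(t)}\bigr)$ with $\kF_i|_{q^{-1}(t)} \cong \kE\bigl(\exp(2\pi i v_i)\, g\bigr)$, so that $H^0\bigl(q^{-1}(t), \kG|_{q^{-1}(t)}\bigr) \cong \Hom_E\bigl(\kE(\exp(2\pi i v_1)\,g), \kE(\exp(2\pi i v_2)\,g)\bigr)$. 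I would then pick $t_0 = (0, v, y_1, y_2, \mathbbm{1})$ for an arbitrary $v \in \CC \setminus \Lambda$ (the values of $y_1, y_2$ being irrelevant): the spectra of $\mathbbm{1}$ and of $\exp(2\pi i v)\,\mathbbm{1}$ are $\{\exp(2\pi i \cdot 0)\}$ and $\{\exp(2\pi i v)\}$, and $0 - v \notin \Lambda$, so Corollary \ref{C:nomorphdegzerobundles} yields $H^0\bigl(q^{-1}(t_0), \kG|_{q^{-1}(t_0)}\bigr) \cong \Hom_E\bigl(\kE(\mathbbm{1}), \kE(\exp(2\pi i v)\,\mathbbm{1})\bigr) = 0$.

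Finally I would propagate this fibrewise vanishing to a vanishing of $q_*\kG$ near $t_0$: because $\kG$ is $q$--flat and $q$ is proper, the function $t \mapsto \dim_\CC H^0\bigl(q^{-1}(t), \kG|_{q^{-1}(t)}\bigr)$ is upper semicontinuous, hence it is identically zero on some open neighbourhood $U$ of $t_0$; being locally constant on $U$, the base--change morphism $(q_*\kG)|_U \otimes \kappa(t) \to H^0\bigl(q^{-1}(t), \kG|_{q^{-1}(t)}\bigr) = 0$ is an isomorphism for every $t \in U$, and Nakayama's lemma then forces $(q_*\kG)|_U = 0$. Thus $t_0 \notin \mathrm{Supp}(q_*\kG)$, as required; in fact the same argument shows $\mathrm{Supp}(q_*\kG) \subseteq \{\, t \in T : \Hom_E(\kF_1|_{q^{-1}(t)}, \kF_2|_{q^{-1}(t)}) \ne 0 \,\}$, which by Corollary \ref{C:nomorphdegzerobundles} is contained in the locus where $\kF_1|_{q^{-1}(t)}$ and $\kF_2|_{q^{-1}(t)}$ share a Jordan--H\"older quotient --- precisely the ``bad locus'' governing the poles of $r_B$. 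I expect the one genuinely delicate point to be this last step, the passage from fibrewise $H^0$--vanishing to vanishing of the direct image on a neighbourhood; this is exactly the analytic version of \emph{semicontinuity and base change}, and if one prefers to avoid quoting it, one can instead represent the direct images of $\kG$ locally on $T$ by a bounded complex of vector bundles and argue with the ranks of its differentials.
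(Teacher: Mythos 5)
Your proposal is correct and follows essentially the same route as the paper: Grauert's direct image theorem gives coherence of $q_*\,{\mathcal Hom}_X(\kF_1,\kF_2)$, and the vanishing of $\Hom_E\bigl(\kE(\exp(2\pi i v_1)g),\kE(\exp(2\pi i v_2)g)\bigr)$ from Corollary \ref{C:nomorphdegzerobundles} at a suitable (indeed generic) point of $T$ shows the support is proper. The only difference is that you justify the passage from fibrewise vanishing of $H^0$ to vanishing of the direct image via semicontinuity plus cohomology-and-base-change, whereas the paper simply asserts the base-change isomorphism at every point; your version is the more careful one, but the argument is the same in substance.
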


\begin{proof} By Grauert's direct image theorem, the sheaf
  $q_* {\mathcal Hom}_X(\kF_1, \kF_2)$ is coherent, hence it is supported on a closed analytic subset
  $\Delta$ of the base  $T$.
Since ${\mathcal Hom}_X(\kF_1, \kF_2)$ is a vector bundle on $X$, it is flat over $T$
and for any point $t  = (v_1,\,  v_2,\,  y_1,\,  y_2,\,  g) \in T$ we have a base--change isomorphism
$$
q_* {\mathcal Hom}_X(\kF_1, \kF_2) \otimes \CC_{t} \cong
\Hom_E\bigl(
\kE(g) \otimes \kL_{v_1}, \kE(g) \otimes \kL_{v_2}\bigr).
$$
By Corollary \ref{C:nomorphdegzerobundles}, we have the vanishing $\Hom_E\bigl(
\kE(g) \otimes \kL_{v_1}, \kE(g) \otimes \kL_{v_2}\bigr) = 0$  for
generically chosen  $v_1, v_2 \in J$ and $g \in G$. Hence,   $\Delta$
is a proper subset of $T$.
\end{proof}

\noindent
The following result  can be proven along the same lines as Lemma \ref{L:corGrauert}.

\begin{lemma}
Let $D_i := \mathsf{Im}(h_i) \subseteq {X}$. Then the sheaf
$q_* {\mathcal Hom}_X\bigl(\kF_1(D_2), \kF_2(D_1)\bigr)$ is supported on a proper closed analytic
subset $\Delta'$ of $T$ and $q_* {\mathcal Hom}_{{X}}\bigl(\kF_1, \kF_2(D_1)\bigr)$ is
a vector vector bundle of rank $n^2$.
\end{lemma}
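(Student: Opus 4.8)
The plan is to mimic the proof of Lemma \ref{L:corGrauert}, but now working with the twisted sheaves $\kF_1(D_2)$ and $\kF_2(D_1)$, respectively $\kF_1$ and $\kF_2(D_1)$. First I would note that $D_1$ and $D_2$ are smooth divisors on $X$: indeed $h_i$ is a section of the smooth projection $q$, so $D_i = \mathsf{Im}(h_i)$ is a closed submanifold of $X$ of codimension one, and twisting a vector bundle by $\kO_X(D_i)$ again gives a vector bundle on $X$. Hence both $\kF_1(D_2)$, $\kF_2(D_1)$ and $\kF_1$, $\kF_2(D_1)$ are vector bundles on $X$, flat over $T$, and Grauert's direct image theorem applies: the sheaves $q_* {\mathcal Hom}_X\bigl(\kF_1(D_2), \kF_2(D_1)\bigr)$ and $q_* {\mathcal Hom}_X\bigl(\kF_1, \kF_2(D_1)\bigr)$ are coherent on $T$.

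Next I would compute the fibres of these direct images via the base-change isomorphism, exactly as in Lemma \ref{L:corGrauert}. For a point $t = (v_1, v_2, y_1, y_2, g) \in T$ the restriction of $\kF_i$ to $q^{-1}(t) \cong E$ is $\kE(g) \otimes \kL_{v_i} = \kF_{v_i}$ (in the notation of Lemma \ref{L:easyfact}), while $\kO_X(D_i)\big|_{q^{-1}(t)} \cong \kO_E([y_i])$ since $D_i \cap q^{-1}(t)$ is the reduced point $y_i \in E$. Therefore
\begin{equation*}
q_* {\mathcal Hom}_X\bigl(\kF_1(D_2), \kF_2(D_1)\bigr) \otimes \CC_t \cong \Hom_E\bigl(\kF_{v_1}(y_2), \kF_{v_2}(y_1)\bigr)
\end{equation*}
and
\begin{equation*}
q_* {\mathcal Hom}_X\bigl(\kF_1, \kF_2(D_1)\bigr) \otimes \CC_t \cong \Hom_E\bigl(\kF_{v_1}, \kF_{v_2}(y_1)\bigr),
\end{equation*}
provided the higher direct image $R^1 q_* {\mathcal Hom}$ vanishes in a neighbourhood of $t$, which holds by semicontinuity once we know $\Ext^1_E$ vanishes on the relevant fibre. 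For the second sheaf, Lemma \ref{L:easyfact} gives that the right-hand side is $n^2$-dimensional for \emph{every} choice of $v_1, v_2, y_1$, and that $\Ext^1_E\bigl(\kF_{v_1}, \kF_{v_2}(y_1)\bigr) = 0$ always; hence $R^1 q_*$ vanishes identically, the base-change map is an isomorphism everywhere, and $q_* {\mathcal Hom}_X\bigl(\kF_1, \kF_2(D_1)\bigr)$ is locally free of rank $n^2$ on all of $T$. For the first sheaf, $\kF_{v_1}(y_2) = \kE(g) \otimes \kL_{v_1} \otimes \kO_E([y_2])$ and $\kF_{v_2}(y_1) = \kE(g) \otimes \kL_{v_2} \otimes \kO_E([y_1])$ are semistable of the same slope one, so by Corollary \ref{C:nomorphdegzerobundles} (applied after the degree-one twists cancel in the slope comparison, using the explicit automorphy factor of $\kO_E([y_i])$ from Lemma \ref{L:LBdegone}) the Hom-space vanishes for generic $(v_1, v_2, y_1, y_2, g)$; thus the support $\Delta'$ of $q_* {\mathcal Hom}_X\bigl(\kF_1(D_2), \kF_2(D_1)\bigr)$ is a proper closed analytic subset of $T$.

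The main obstacle I anticipate is the bookkeeping of slopes and eigenvalue conditions in the degree-one case: Corollary \ref{C:nomorphdegzerobundles} is stated for degree zero bundles, whereas $\kF_1(D_2)$ and $\kF_2(D_1)$ have degree $n$. The fix is to observe that $\kF_{v_1}(y_2)$ and $\kF_{v_2}(y_1)$ differ by tensoring with the degree zero line bundle $\kL_{v_1 - v_2} \otimes \kO_E([y_2] - [y_1]) \cong \kL_{v_1 - v_2 + y_2 - y_1 + c}$ for an explicit constant $c$ coming from Lemma \ref{L:LBdegone}; so $\Hom_E\bigl(\kF_{v_1}(y_2), \kF_{v_2}(y_1)\bigr) \cong \Hom_E\bigl(\kF_{v_1}, \kF_{v_2}(y_1 - y_2 + (\text{shift}))\bigr)$, and a Hom between two semistable bundles of slope $0$ and slope $0$ respectively vanishes as soon as their Jordan--Hölder constituents are disjoint, i.e.\ for $(v_1 - v_2) + (y_2 - y_1)$ outside a countable union of translates of $\Lambda$ (equivalently outside a suitable sublattice). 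Since this locus is proper, $\Delta'$ is proper, completing the argument.
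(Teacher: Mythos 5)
Your argument is correct and is exactly the intended one: the paper gives no separate proof of this lemma, stating only that it ``can be proven along the same lines as Lemma \ref{L:corGrauert}'', and your proposal carries out precisely those lines (Grauert plus base change on the fibres, Lemma \ref{L:easyfact} with the vanishing of $\Ext^1$ for the local freeness of rank $n^2$, and the reduction of $\Hom_E\bigl(\kF_{v_1}(y_2),\kF_{v_2}(y_1)\bigr)$ to Corollary \ref{C:nomorphdegzerobundles} via the twist $\kO_E([y_1]-[y_2])\cong\kL_{y_1-y_2}$ for the properness of $\Delta'$). The only cosmetic point is that the ``explicit constant $c$'' you allow for is in fact zero, since the ratio of the automorphy factors from Lemma \ref{L:LBdegone} is exactly $\exp\bigl(2\pi i(y_2-y_1)\bigr)$.
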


\medskip
\noindent
$\bullet$  Let $\breve{T} := T \, \setminus \, (\Delta \cup \Delta')$ and  $\breve{X} := q^{-1}(\breve{T})$.
For the  sake of simplicity we
 denote the restrictions of $\kF_1$ and $\kF_2$ on $\breve{X}$ by the same symbols.
 Let $\omega \in H^0\bigl(\Omega_{\breve{X}/\breve{T}}\bigr)$ be the pull-back of the
 differential form $dz \in H^0\bigl(\Omega_E)$. Note that we are in the situation of \cite[Section 5.3]{BK4}. In particular, we have the following  commutative diagram in $\Coh(\breve{T})$, where all arrows are isomorphisms
  of vector bundles on $\breve{T}$:
 \begin{equation}\label{E:diag-of-sheaves}
 \begin{array}{c}
 \xymatrix{
  & q_* {\mathcal Hom}_{\breve{X}}\bigl(\kF_1, \kF_2(D_1)\bigr) \ar[ld]_{\res_{h_1}^{\kF_1, \, \kF_2}(\omega)}
  \ar[rd]^{\ev^{\kF_1, \, \kF_2(D_1)}_{h_2}} & \\
  {\mathcal Hom}_{\breve{T}}\bigl(h_1^*\kF_1, h_1^*\kF_2\bigr)
  \ar[rr]^{\tilde{r}^{\kF_1, \, \kF_2}_{h_1, \, h_2}} & & {\mathcal Hom}_{\breve{T}}\bigl(h_2^*\kF_1, h_2^*\kF_2\bigr).
 }
 \end{array}
 \end{equation}
 The morphisms $\res_{h_1}^{\kF_1, \, \kF_2}(\omega)$ and $\ev^{\kF_1, \, \kF_2(D_1)}_{h_2}$
 are induced by the short exact sequences
 $$
 0 \rightarrow \Omega_{\breve{X}/\breve{T}} \rightarrow
 \Omega_{\breve{X}/\breve{T}}(D_1) \stackrel{\res_{D_1}}\lar \kO_{D_1} \rightarrow 0, \quad
 0 \rightarrow \kO_{\breve{X}}(-D_2) \rightarrow \kO_{\breve{X}} \rightarrow \kO_{D_1} \lar 0,
 $$
 see \cite[Section 5.3]{BK4}.
  By \cite[Theorem 5.17]{BK4}, after tensoring the diagram (\ref{E:diag-of-sheaves}) with $\CC_{t}$, where $t = (v_1, v_2, y_1, y_2, g) \in \breve{T}$,
  and
 applying  base change isomorphisms, we get  the commutative diagram   (\ref{E:main-diagram}).
 In particular, the function $\tilde{r}_{B}(v_1, v_2; y_1, y_2)$
 from Theorem \ref{T:thmcles} is just the  isomorphism of vector bundles
 $\tilde{r}^{\kF_1, \, \kF_2}_{h_1, \, h_2}$ written with respect of the trivialization
 $\gamma$, described in  Remark \ref{R:TrivialAutomFact}. This implies that the tensor $r_B(v, y)$ is
 non-degenerate.

 In a similar way, the isomorphism $\tilde{r}_{B}(v_1, v_2; y_1, y_2)$ determines
  a holomorphic section
 ${r}^{\kF_1, \, \kF_2}_{h_1, \, h_2} \in H^0\bigl(\breve{T},
 {\mathcal Hom}_{\breve{T}}\bigl(h_1^*\kF_2, h_1^*\kF_1\bigr) \otimes
 {\mathcal Hom}_{\breve{T}}\bigl(h_2^*\kF_1, h_2^*\kF_2\bigr) \bigr)$.
 Trivializing $\kF_1$ and $\kF_2$ as in Remark \ref{R:TrivialAutomFact}, the section
 ${r}^{\kF_1, \, \kF_2}_{h_1, \, h_2}$ becomes  the tensor-valued function $r_B(v, y)$
 from Theorem \ref{T:thmcles}.
 This proves that
 ${r}_{B}(v, y)$ is holomorphic on $(\CC \setminus \Sigma_B) \times (\CC \setminus  \Lambda)$ and
   as a function  of the input matrix $B$. To show that ${r}_{B}(v, y)$ is meromorphic on
   $\CC \times \CC$ note that $\res_{h_1}^{\kF_1, \, \kF_2}(\omega)$ and $\ev^{\kF_1, \, \kF_2(D_1)}_{h_2}$
   are morphisms of vector bundles of rank $n^2$ on the whole base $T$ and
    ${\tilde{r}}^{\kF_1, \, \kF_2}_{h_1, \, h_2} = \ev^{\kF_1, \, \kF_2(D_1)}_{h_2} \circ
   \bigl(\res_{h_1}^{\kF_1, \, \kF_2}(\omega)\bigr)^{-1}$ is a  meromorphic isomorphism of
   ${\mathcal Hom}_{{T}}\bigl(h_1^*\kF_1, h_1^*\kF_2\bigr)$ and
   ${\mathcal Hom}_{{T}}\bigl(h_2^*\kF_1, h_2^*\kF_2\bigr)$.

 \section{Computations of solutions of AYBE}\label{S:Kalkul}
 
 In this section we compute the solutions of the associative Yang--Baxter equation 
 (\ref{E:AYBE}) attached to a diagonal matrix and to a Jordan block.

 \subsection{Solution obtained from a diagonal matrix}
 All or  computations   are based on the following standard fact.

 \begin{lemma}\label{L:basic-for-theta} Let $\varphi(z) = \exp(-\pi i \tau - 2 \pi i z)$. Then the
 vector space
 \begin{equation}
\left\{f: \CC \lar \CC
\left|
\begin{array}{l}
f \mbox{\rm{\quad is holomorphic}} \\
f(z+1) = f(z) \\
f(z+\tau)  = \varphi(z) f(z)
\end{array}
\right.
  \right\}
\end{equation}
is one--dimensional and generated by the third Jacobian theta--function
$$
\bar{\theta}(z) = \theta_3(z|\tau) = \sum\limits_{n \in \mathbb{Z}}
\exp(\pi i n^2 \tau + 2 \pi i n z).
$$
 \end{lemma}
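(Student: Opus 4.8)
The plan is to identify the space of holomorphic functions $f$ with $f(z+1)=f(z)$ and $f(z+\tau)=\varphi(z)f(z)$, where $\varphi(z)=\exp(-\pi i\tau-2\pi iz)$, with a space of global sections of a degree-one line bundle on $E=\CC/\Lambda$, and then use Riemann--Roch to compute its dimension. Concretely, by Lemma \ref{L:LBdegone} (taken with $x=0$) we have $\kO_E([0])\cong\kE\bigl(\varphi(z+\frac{\tau+1}{2})\bigr)$, and more to the point $\kE(\varphi)$ is a line bundle of degree one on $E$. By Theorem \ref{T:AutomFactors}(2), the space in question is exactly $\Sol_{1,\varphi}\cong\Hom\bigl(\kE(1),\kE(\varphi)\bigr)=\Hom\bigl(\kO_E,\kE(\varphi)\bigr)=H^0\bigl(E,\kE(\varphi)\bigr)$. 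Since $\kE(\varphi)$ has degree one and $E$ has genus one, Riemann--Roch gives $\dim H^0=\deg=1$ (the $H^1$ term vanishes because $\deg>0$ for a line bundle on a genus-one curve, or by Serre duality $H^1(\kE(\varphi))^*\cong\Hom(\kE(\varphi),\kO_E)=0$ as there are no nonzero maps from a degree-one to a degree-zero line bundle). This establishes one-dimensionality.

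Next I would verify that $\bar\theta(z)=\sum_{n\in\ZZ}\exp(\pi in^2\tau+2\pi inz)$ actually lies in the space, which then forces it to be a generator (once we know it is nonzero). The periodicity $\bar\theta(z+1)=\bar\theta(z)$ is immediate since replacing $z$ by $z+1$ multiplies the $n$-th term by $\exp(2\pi in)=1$. For the quasi-periodicity, substitute $z\mapsto z+\tau$: the $n$-th term becomes $\exp(\pi in^2\tau+2\pi in(z+\tau))=\exp(\pi i(n+1)^2\tau+2\pi i(n+1)z)\cdot\exp(-\pi i\tau-2\pi iz)$, and reindexing $n\mapsto n-1$ in the sum shows $\bar\theta(z+\tau)=\varphi(z)\bar\theta(z)$ with $\varphi(z)=\exp(-\pi i\tau-2\pi iz)$, exactly as required. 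Convergence of the series is standard: $\lvert\exp(\pi in^2\tau)\rvert=\lvert q\rvert^{n^2}$ with $\lvert q\rvert<1$ since $\mathsf{Im}(\tau)>0$, so the series converges locally uniformly and defines a holomorphic function, and in particular the partial sums are holomorphic and the limit is too.

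Finally I would confirm $\bar\theta\not\equiv 0$, so that it genuinely generates the one-dimensional space. One clean way: a nonzero section of the degree-one line bundle $\kE(\varphi)$ has a single zero on $E$; it is classical that $\theta_3(\cdot|\tau)$ vanishes precisely at $z=\frac{1+\tau}{2}$ modulo $\Lambda$, but for the bare statement it suffices to note that the constant term (the $n=0$ term) contributes $1$, so e.g.\ $\bar\theta$ is not identically zero — alternatively evaluate the Fourier expansion in the $z$-variable and observe the zeroth Fourier coefficient is the nonzero constant $1$. I do not expect any genuine obstacle here; the only mild subtlety is being careful with the reindexing in the quasi-periodicity computation and with citing the correct degree-one line bundle, but both are routine given Theorem \ref{T:AutomFactors} and Lemma \ref{L:LBdegone}. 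Essentially this lemma is the rank-one, $B=1$ special case of Corollary \ref{C:dim-of-Sol} together with an explicit generator, and that is how I would frame the proof.
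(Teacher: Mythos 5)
Your argument is correct, but it takes a different route from the paper, which simply cites \cite[Chapter 1]{MumfordTheta}; the classical proof there is pure Fourier analysis: a holomorphic $f$ with $f(z+1)=f(z)$ has an expansion $f(z)=\sum_{n}a_{n}e^{2\pi inz}$, and the condition $f(z+\tau)=\varphi(z)f(z)$ forces the recursion $a_{n+1}=\exp\bigl(\pi i(2n+1)\tau\bigr)a_{n}$, so all coefficients are determined by $a_{0}$ and the space is (at most) one--dimensional, with $\bar\theta$ the solution normalized by $a_{0}=1$. Your route instead identifies the space with $H^{0}\bigl(E,\kE(\varphi)\bigr)$ for a degree--one line bundle and applies Riemann--Roch, then checks directly that $\bar\theta$ is a nonzero element; all of these steps are sound (the quasi-periodicity reindexing and the nonvanishing via the zeroth Fourier coefficient are exactly right). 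What the paper's citation buys is logical independence: this lemma is meant as an elementary analytic input, prior to the vector-bundle machinery, and indeed Lemma \ref{L:LBdegone} --- which you invoke to see that $\kE(\varphi)$ has degree one --- is itself usually \emph{proved} by exhibiting $\bar\theta$ (or $\theta_{1}$) as a section with a single zero, so there is a whiff of circularity in leaning on it here. This is not fatal --- the degree of $\kE(\varphi)$ can be read off from the automorphy factor by a Chern-class computation without any theta functions --- but if you want your proof to stand on its own you should either make that degree computation explicit or fall back on the Fourier recursion, which proves both the dimension bound and the generator in one stroke. What your approach buys in exchange is uniformity: as you observe, the lemma then becomes the $n=1$, $B=1$ instance of Corollary \ref{C:dim-of-Sol}.
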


 \begin{proof}
 A proof of this result can  for instance be found in \cite[Chapter 1]{MumfordTheta}.
 \end{proof}

 \begin{theorem}
 Let  $B = \mathsf{diag}\bigl(\exp(2\pi i \lambda_1), \dots, \exp(2\pi i \lambda_n)\bigr)$
for some $\lambda_1, \dots, \lambda_n \in \CC$.
Then the corresponding solution of the associative Yang--Baxter equation
described in Theorem \ref{T:thmcles}  is given by the  following formula:
\begin{equation}\label{E:sol-for-diag}
r_B(v, y) =
\sum\limits_{k, \, l = 1}^n \sigma(v - \lambda_{kl}, y) \, e_{l, k} \otimes e_{k, l},
\end{equation}
where $\lambda_{kl} = \lambda_k - \lambda_l$ for all $1 \le k, l \le n$ and $\sigma(u, x)$ is the Kronecker function.
 \end{theorem}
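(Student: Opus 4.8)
The plan is to compute the space $\Sol=\Sol_{B,\,v,\,\tau}$ appearing in the diagram (\ref{E:keydiagram}) explicitly when $B=\mathsf{diag}\bigl(\exp(2\pi i\lambda_1),\dots,\exp(2\pi i\lambda_n)\bigr)$ is diagonal, to read off the linear map $\tilde r_B(v,y)=\ev_y\circ\res_0^{-1}$, and then to transport it through the isomorphism $\mathsf{can}$. The point is that a diagonal automorphy factor makes everything decouple: writing $\Phi=(\Phi_{kl})$ for an element of $\Sol$, the defining equation $\Phi(z+\tau)B=e(z)B\Phi(z)$ with $e(z)=-\exp(-2\pi i(z+v+\tau))$ splits into the $n^2$ scalar conditions
$$
\Phi_{kl}(z+1)=\Phi_{kl}(z),\qquad
\Phi_{kl}(z+\tau)=-\exp\!\bigl(-2\pi i(z+v-\lambda_{kl}+\tau)\bigr)\,\Phi_{kl}(z).
$$
Rewriting the automorphy factor as $\varphi(z-d_{kl})$ with $\varphi(z)=\exp(-\pi i\tau-2\pi i z)$ as in Lemma \ref{L:basic-for-theta} and $d_{kl}=\tfrac{1-\tau}{2}-v+\lambda_{kl}$ (well-defined modulo $1$, which is harmless since $\bar\theta$ is $1$-periodic), Lemma \ref{L:basic-for-theta} shows that the space of admissible $\Phi_{kl}$ is one-dimensional, spanned by $z\mapsto\bar\theta(z-d_{kl})$. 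Hence $\Sol$ has the basis $\{\Phi^{(kl)}\}_{1\le k,l\le n}$ with $\Phi^{(kl)}(z)=\bar\theta(z-d_{kl})\,e_{k,l}$, which in particular re-proves $\dim_{\CC}\Sol=n^2$.

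Applying the residue and evaluation maps to this basis, $\res_0(\Phi^{(kl)})=\bar\theta(-d_{kl})\,e_{k,l}$ and $\ev_y(\Phi^{(kl)})=\bar\theta(y-d_{kl})\,\bar\theta\!\bigl(y+\tfrac{\tau+1}{2}\bigr)^{-1}e_{k,l}$, so that $\tilde r_B(v,y)$ is diagonal with respect to the matrix units: $\tilde r_B(v,y)(e_{k,l})=c_{kl}(v,y)\,e_{k,l}$, where
$$
c_{kl}(v,y)=\frac{\bar\theta(y-d_{kl})}{\bar\theta(-d_{kl})\;\bar\theta\!\bigl(y+\tfrac{\tau+1}{2}\bigr)}.
$$
Setting $u=v-\lambda_{kl}$, so that $d_{kl}=\tfrac{1-\tau}{2}-u$, this is a ratio of three values of the third Jacobian theta-function, and the crux of the proof is to identify it with the Kronecker function $\sigma(u,y)=\theta'(0)\theta(u+y)/\bigl(\theta(u)\theta(y)\bigr)$. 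For this I would invoke the classical half-period relation $\bar\theta(z)=-q^{1/4}e^{-\pi i z}\,\theta\!\bigl(z-\tfrac{1+\tau}{2}\bigr)$ between $\theta_3$ and $\theta_1$, together with $\theta(z+1)=-\theta(z)$; substituting these and simplifying the exponential prefactors collapses the three $\theta_3$'s into $-iq^{1/4}\,\theta(u+y)/\bigl(\theta(u)\theta(y)\bigr)$, and the identity $\theta'(0)=-iq^{1/4}\,\bar\theta'\!\bigl(\tfrac{1+\tau}{2}\bigr)$ — which is exactly why the theta-constant $\bar\theta'\!\bigl(\tfrac{1+\tau}{2}\bigr)$ is built into the form $\omega$ (and into $\ev_y$) in Theorem \ref{T:keythm} — turns this into $\sigma(u,y)$. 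Thus $c_{kl}(v,y)=\sigma(v-\lambda_{kl},y)$.

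Finally, I would transport the linear map $\tilde r_B(v,y)\colon e_{k,l}\mapsto\sigma(v-\lambda_{kl},y)\,e_{k,l}$ back to a tensor via $\mathsf{can}$, which sends $X\otimes Y$ to the operator $Z\mapsto\Tr(XZ)\,Y$. Since $\Tr(e_{l,k}Z)=Z_{kl}$, the tensor $\sum_{k,l}\sigma(v-\lambda_{kl},y)\,e_{l,k}\otimes e_{k,l}$ is sent by $\mathsf{can}$ to the operator $Z\mapsto\sum_{k,l}\sigma(v-\lambda_{kl},y)\,Z_{kl}\,e_{k,l}$, which is precisely $\tilde r_B(v,y)$; hence $r_B(v,y)=\sum_{k,l}\sigma(v-\lambda_{kl},y)\,e_{l,k}\otimes e_{k,l}$, which is formula (\ref{E:sol-for-diag}). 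I expect the only real obstacle to be the middle step — carefully chasing the half-period shifts through the $\theta_3$-to-$\theta_1$ conversion and keeping track of the normalising theta-constant coming from the choice of $\omega$; everything else is a formal consequence of Lemma \ref{L:basic-for-theta} and of the fact that a diagonal $B$ makes the whole construction diagonal in the matrix-unit basis.
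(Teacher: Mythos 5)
Your proposal is correct and follows essentially the same route as the paper: the diagonal $B$ decouples the automorphy condition into $n^2$ scalar equations, Lemma \ref{L:basic-for-theta} gives the basis of shifted $\bar\theta$'s, and the residue/evaluation quotient is converted to the Kronecker function via the half-period relation between $\theta_3$ and $\theta_1$, with the resulting constant $\frac{\exp(\pi i\tau/4)}{i\,\theta'(0)}$ discarded (harmless, since AYBE is invariant under rescaling of $r$). Your bookkeeping via $d_{kl}$ and the final transport through $\mathsf{can}$ agree with the paper's computation.
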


 \begin{proof}
 Let $\Phi(z) = \bigl(a_{kl}(z)\bigr)$ be an element of $\Sol = \Sol_{B, \, v,\, 0, \,\tau}$, where
 $v = v_1 - v_2$.
 Then for all $1 \le k, l \le n$ we have:
 $$
 \left\{
 \begin{array}{ccl}
 a_{kl}(z+1) & = &  a_{kl}(z) \\
 a_{kl}(z+\tau) & = & \exp\bigl(- \pi i \tau - 2 \pi i (z + v + {\frac{\tau +1}{2}}
   - \lambda_{kl})\bigr) a_{kl}(z).
 \end{array}
 \right.
 $$
 Hence, there exist $\beta_{kl} \in \CC$ such that
 $a_{kl}(z) = \beta_{kl} \bar{\theta}(z + v + \frac{\tau +1}{2} - \lambda_{kl})$.

 If
 $A = (\alpha_{kl}) \in \Mat_{n \times n}(\CC)$ is such that
 $\res_0\bigl(\Phi(z)\bigr) = A$ then
 $\beta_{kl} = \frac{1}{\bar{\theta}(v + \frac{\tau +1}{2} - \lambda_{kl})} \alpha_{kl}$.
 If $C = (\gamma_{kl}) := \ev_{y}\bigl(\Phi(z)\bigr)$ then for all $1 \le k, l \le n$ we have
 $$
 \gamma_{kl} = \frac{\bar{\theta}(v + y - \lambda_{kl} + \frac{\tau +1}{2})}{
 \bar{\theta}(v - \lambda_{kl} + \frac{\tau +1}{2}) \, \bar{\theta}(y + \frac{\tau +1}{2})} \alpha_{kl}
 =
 \frac{1}{i \exp(-\pi i \frac{\tau}{4})} \frac{\theta(v-\lambda_{kl} + y)}{\theta(v - \lambda_{kl})
 \theta(y)} \alpha_{kl},
 $$
 where we have used the well--known relation between the first and the third Jacobian theta functions
 $
 \bar{\theta}(z + \frac{\tau +1}{2}) = i \exp(-\pi i (z + \frac{\tau}{4})) \theta(z).
 $
 Hence, the linear map $\tilde{r}_B(v, y): \Mat_{n \times n}(\CC) \rightarrow
 \Mat_{n \times n}(\CC)$ sends  the basis vector $e_{k, l}$ to
 $\frac{\exp(\pi i \frac{\tau}{4})}{i \theta'(0)} \, \sigma(v-\lambda_{kl}, y) e_{k,  l}$. Neglecting
 the constant $\frac{\exp(\pi i \frac{\tau}{4})}{i \theta'(0)}$, we end  up
 with the solution $r_B(v, y)$ given by   (\ref{E:sol-for-diag}).
 \end{proof}
 
 \subsection{Solution attached to a Jordan block} In this subsection we compute the
 solution of the associative Yang--Baxter equation (\ref{E:AYBE}) attached to a Jordan 
 block of size $n \times n$. Fist note the following easy fact. 

\begin{lemma}
For any $n \in \mathbb{N}$ and $\lambda \in \CC^*$ the solution $r_{J_n(\lambda)}(v, y)$
constructed in Theorem \ref{T:thmcles}, is gauge
equivalent to $r_J(v, y)$, where $J_n(\lambda)$ is the Jordan block of size $n\times n$ with
eigenvalue $\lambda$ and $J = J_n(1)$.
\end{lemma}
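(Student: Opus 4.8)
The plan is to combine two elementary observations. The first is that the construction of $r_B$ is invariant under scaling $B$ by a nonzero scalar, i.e.~$r_{\lambda B}(v,y) = r_B(v,y)$ for every $\lambda \in \CC^{*}$. The second is that, although $J_n(\lambda)$ is \emph{not} conjugate to $J_n(1)$ for $\lambda \neq 1$, it \emph{is} conjugate to $\lambda J_n(1)$; plugging this conjugation into Proposition~\ref{P:gauge-transf} and then using the scale-invariance exhibits $r_{J_n(\lambda)}$ as a gauge transform of $r_{J_n(1)} = r_J$.

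For the scale-invariance I would argue straight from the definitions. The functional equation $\Phi(z+\tau)B = e(z)\,B\,\Phi(z)$ cutting out $\Sol_{B,\,v,\,y_1,\,\tau}$ in Corollary~\ref{C:dim-of-Sol} involves $B$ only through left multiplication, and the scalar $\lambda$ commutes with everything, so it cancels from both sides; hence $\Sol_{\lambda B,\,v,\,y_1,\,\tau} = \Sol_{B,\,v,\,y_1,\,\tau}$ as subspaces of holomorphic maps $\CC \to \Mat_{n\times n}(\CC)$. Since the maps $\res_{y_1}$ and $\ev_{y_2}$ in diagram~(\ref{E:def-of-sol}) do not involve $B$ at all, we get $\tilde{r}_{\lambda B}(v_1,v_2;y_1,y_2) = \tilde{r}_B(v_1,v_2;y_1,y_2)$, and therefore $r_{\lambda B} = r_B$. (Alternatively, $r_B$ depends on $B$ only through the family of bundles $\kE\bigl(\exp(2\pi i v)B\bigr)$, $v \in \CC$; writing $\lambda = \exp(2\pi i\mu)$, the family attached to $\lambda B$ is the one attached to $B$ reindexed by the translation $v\mapsto v+\mu$, and Proposition~\ref{P:transl-invariance} gives the same conclusion.) I would also record that the domains match: the logarithms of the eigenvalues of $\lambda B$ and of $B$ differ by the common shift $\mu$, so $\Sigma_{\lambda B} = \Sigma_B$, and for $B = J_n(1)$ all the lattices in sight equal $\Lambda$.

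For the conjugacy, set $N = J_n(1) - I$, a nilpotent single Jordan block of size $n$; then $\lambda J_n(1) = \lambda I + \lambda N$, and since $\lambda \neq 0$ the matrix $\lambda N$ is again a nilpotent single Jordan block of size $n$, so $\lambda J_n(1)$ has the same Jordan normal form as $J_n(\lambda) = \lambda I + N$. Hence there is $S \in \GL_n(\CC)$, which may be taken diagonal, with $S^{-1}\bigl(\lambda J_n(1)\bigr)S = J_n(\lambda)$. Proposition~\ref{P:gauge-transf} then gives
$$
r_{J_n(\lambda)}(v,y) = \bigl(S^{-1}\otimes S^{-1}\bigr)\, r_{\lambda J_n(1)}(v,y)\,\bigl(S\otimes S\bigr) = \bigl(S^{-1}\otimes S^{-1}\bigr)\, r_{J}(v,y)\,\bigl(S\otimes S\bigr),
$$
which is precisely gauge equivalence of $r_{J_n(\lambda)}$ and $r_J$ in the sense of \cite[Definition~2.5]{BK4}. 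I do not anticipate any real obstacle; the only point worth isolating is that $J_n(\lambda)$ and $J_n(1)$ cannot be fed directly into Proposition~\ref{P:gauge-transf} (they are not conjugate), and the scale-invariance $r_{\lambda B} = r_B$ is exactly what bridges that gap.
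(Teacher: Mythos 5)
Your proposal is correct and follows essentially the same route as the paper: the paper's proof likewise observes that $J_n(\lambda)$ is conjugate to $\lambda J$ and invokes Proposition~\ref{P:gauge-transf}, and then notes that $\lambda J$ and $J$ give the same solution (which the paper states as ``clear from the algorithm'' and you justify in detail via the cancellation of $\lambda$ in the functional equation defining $\Sol$). No gaps.
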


\begin{proof}
Since the matrices $J_n(\lambda)$ and $\lambda \cdot J$ are conjugate,  Proposition
\ref{P:gauge-transf} implies that the corresponding solutions are gauge equivalent. From the
algorithm of the construction of solutions of (\ref{E:AYBE}) presented in Theorem \ref{T:thmcles}
it is clear that the matrices $\lambda \cdot J$ and $J$ give the same solutions.
\end{proof}

\medskip
\noindent
Hence, it suffices  to describe the solution of the associative Yang--Baxter equation (\ref{E:AYBE}) attached
to the Jordan block $J$.

\begin{definition}\label{D:defin-of-matrixN} Let $n \in \mathbb{N}$ be fixed. For all $1 \le k \le n-1$
we set $a_{k}=\frac{(-1)^{k}}{k}$,
\[
A_{0}=\left(
\begin{array}{cccc}
0 & \cdots & \cdots & 0\\
a_{1} & \ddots &  & \vdots\\
\vdots & \ddots & \ddots & \vdots\\
a_{n-1} & \cdots & a_{1} & 0
\end{array}
\right) \quad \mbox{and} \quad  A_{k}=-a_{k}\,\cdot\mathbbm{1}_{n\times n}.
\]
Next, consider the following matrix $N$ from $\Mat_{n^2 \times n^2}(\CC)$:
\begin{equation}
N=\left(\begin{array}{cccc}
A_{0} & A_{1} & \cdots & A_{n-1}\\
0 & \ddots & \ddots & \vdots\\
\vdots & \ddots & \ddots & A_{1}\\
0 & \cdots & 0 & A_{0}\end{array}\right).
\end{equation}
\end{definition}

\noindent
Note the following easy fact.

\begin{lemma}
The matrix $N$ is nilpotent. More precisely, $N^{2n-1}=0$.
\end{lemma}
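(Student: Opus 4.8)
The plan is to exploit the block upper-triangular structure of $N$ together with nilpotency of each diagonal block $A_0$. First I would observe that $A_0$ is strictly lower-triangular of size $n \times n$, hence $A_0^n = 0$; in fact $A_0$ is (up to the specific entries) a nilpotent matrix whose only possibly nonzero entries lie strictly below the diagonal, so $A_0^k$ has zero entries on and above the $k$-th subdiagonal and $A_0^n=0$. The matrix $N$ itself is block upper-triangular with all diagonal blocks equal to $A_0$, so modulo the strictly-block-upper-triangular part, $N$ behaves like $A_0 \otimes \mathbbm{1}_n$ (here $\mathbbm{1}_n$ is the $n\times n$ identity), which is nilpotent of index $n$.

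The key computation is to estimate the nilpotency index of a block upper-triangular matrix with nilpotent diagonal blocks. Write $N = D + U$, where $D = \mathsf{diag}(A_0, \dots, A_0)$ is the block-diagonal part and $U$ is the strictly block-upper-triangular part (the blocks $A_1, \dots, A_{n-1}$ above the diagonal). Then $D$ and $U$ do not commute in general, so I cannot simply add nilpotency indices naively; instead I would argue directly. Since there are $n$ block-rows, $U$ is strictly block-upper-triangular and hence $U^n = 0$. A cleaner approach: regard $\Mat_{n^2\times n^2}(\CC) = \Mat_{n\times n}\bigl(\Mat_{n\times n}(\CC)\bigr)$ and note that $N$ lies in the subalgebra $\kR$ of block upper-triangular matrices whose diagonal blocks are strictly lower-triangular (all the $A_k$ for $k\ge 1$ are scalar multiples of the identity, hence not lower-triangular, but they sit strictly above the block-diagonal). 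Any product of $2n-1$ elements of the set $\{D, U\}$ that is nonzero must use $U$ at most $n-1$ times (as $U^n=0$ requires at least\ldots), so it uses $D$ at least $n$ times; but $n$ factors of $D$ contribute $A_0^{\ge ?}$ only if they are consecutive, which they need not be. So the honest route is the entrywise/filtration argument below.

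The robust argument I would actually carry out uses a two-step filtration. Equip $\CC^{n^2} = \CC^n \otimes \CC^n$ with the filtration coming from the block structure (the ``outer'' $\CC^n$) and, within each block, the standard flag (the ``inner'' $\CC^n$). With respect to the block filtration $0 \subset V_1 \subset \dots \subset V_n = \CC^{n^2}$ where $V_j$ is spanned by the last $j$ block-coordinates, $N$ is ``upper-triangular'': $N(V_j) \subseteq V_j$ and the induced map on $V_j/V_{j-1} \cong \CC^n$ is exactly $A_0$. Hence $N^n(V_j) \subseteq V_{j-1}$ for all $j$ is false in general; rather, each application of $N$ either strictly decreases the block-filtration level (via the off-diagonal blocks) or preserves it while acting by the nilpotent $A_0$ on the associated graded piece. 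Combining: a word in $N$ of length $2n-1$ maps $\CC^{n^2}$ to $0$ because along any ``path'' the filtration level can drop at most $n-1$ times and between consecutive drops the accumulated power of $A_0$ on a graded piece is nilpotent of index $n$; a careful bookkeeping gives that $n$ ``vertical'' steps (powers of $A_0$) plus $n-1$ ``horizontal'' steps (off-diagonal) suffice to kill everything, totalling $2n-1$. The main obstacle is making this bookkeeping precise without hand-waving: I would formalize it by showing $N^{a}$, when restricted and projected appropriately, lands in $A_0^{\,b}$ on graded pieces with $a \ge b + (\text{number of block-drops})$ and that after $n-1$ block-drops one is confined to a single block where $A_0^n = 0$ finishes the job, so $N^{2n-1} = 0$. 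That $2n-1$ is sharp (not merely an upper bound) can be checked on the vector $e_1 \otimes e_n$ (or the analogous extremal basis vector), tracing that $N^{2n-2}(e_1\otimes e_n) \ne 0$; this sharpness claim is stated in the lemma (``more precisely'') and I would include the short explicit trace to justify it.
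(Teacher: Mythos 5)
The paper states this as an ``easy fact'' and offers no proof, so the only question is whether your argument is sound. Its central strategic decision is mistaken: you write $N=D+U$ with $D$ the block--diagonal part and $U$ the strictly block--upper--triangular part, and then assert that ``$D$ and $U$ do not commute in general, so I cannot simply add nilpotency indices naively.'' In fact they \emph{do} commute here, and this is the whole point of the lemma: every off--diagonal block $A_k=-a_k\,\mathbbm{1}_{n\times n}$ ($k\ge 1$) is a scalar matrix, so $(DU)_{i,j}=A_0\cdot(-a_{j-i}\mathbbm{1})=(-a_{j-i}\mathbbm{1})\cdot A_0=(UD)_{i,j}$. Since $A_0$ is strictly lower triangular of size $n$ we have $D^n=0$, and since $U$ is strictly block--upper--triangular with $n$ block rows we have $U^n=0$; the binomial theorem then gives
$$
N^{2n-1}=\sum_{k=0}^{2n-1}\binom{2n-1}{k}\,D^{k}\,U^{\,2n-1-k}=0,
$$
because in every summand either $k\ge n$ or $2n-1-k\ge n$. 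That is the three--line proof you talked yourself out of.

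Your fallback filtration argument is not wrong in spirit, but as written it is not a proof: the decisive step --- that the powers of $A_0$ picked up between successive block--drops ``accumulate'' into a single power $A_0^{\,a_0+\dots+a_m}$ with $a_0+\dots+a_m\ge n$ --- is precisely the assertion that the scalar blocks $A_k$ commute past $A_0$, i.e.\ the commutativity you denied. Without invoking it, a word such as $D\,U\,D\,U\,D$ (length $5=2n-1$ for $n=3$) only gives products of the shape $A_0B_1A_0B_2A_0$, which need not vanish for general off--diagonal blocks $B_i$ even though $A_0^3=0$; so the ``careful bookkeeping'' you defer is exactly where the missing fact must be used. Two further slips: with your conventions the $N$--invariant subspaces are spanned by the \emph{first} $j$ block--coordinates, not the last; and the containment $N^{n}(V_j)\subseteq V_{j-1}$, which you dismiss as ``false in general,'' is actually true (it yields $N^{n^2}=0$, just not the stated exponent). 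Finally, the lemma's ``more precisely'' only specifies the exponent $2n-1$; it does not claim sharpness, so a verification that $N^{2n-2}\neq 0$ is not required.
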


\begin{definition}\label{D:def-of-nabla} Consider the differential operator
 $\nabla=-\frac{1}{2\pi i}\cdot\frac{d}{dz}$
acting on the vector space $\mathcal{M}$ of meromorphic functions on $\CC$.
For  all  $0\leq k,  l\leq n-1$ we define the  linear
operator $\nabla_{k, l}: \kM \rightarrow \kM$ given by the following formula:
\begin{equation}\label{E:def-of-nabla}
\nabla_{k, l}=e_{n(n-k-1)+l+1}^{t}\,\exp\left(\nabla\, N\right)\, e_{n(n-1)+1}.
\end{equation}
Since the matrix $N$ is nilpotent, the operators $\nabla_{k, l}$ are polynomials in $\nabla$.
Note that $\nabla_{0, 0} = e_{n(n-1)+1}^{t}\,\exp\left(\nabla\, N\right)\, e_{n(n-1)+1}$ is the identity operator.
\end{definition}

\noindent
Now we can state the  main result   of this subsection.

\begin{theorem}\label{thm: semi-simple main formula} Let $J$ be the  Jordan block
of size $n \times n$ with eigenvalue one. Then the corresponding solution
of the associative Yang-Baxter equation, described in Theorem \ref{T:thmcles},
is given by the following formula:
\begin{equation}\label{E:real-beauty}
r_{J}(v,y)=\left(\sum_{\begin{smallmatrix}0\leq k\leq n-1\\
0\leq l\leq n-1\end{smallmatrix}}\nabla_{k,l}\bigl(\sigma\left(v,y\right)\bigr)\sum_{\begin{smallmatrix}1\leq i\leq n-l\\
1\leq j\leq n-k\end{smallmatrix}}e_{i,j+k}\otimes e_{j,i+l}\right),
\end{equation}
where $\sigma(v,y)$ is the Kronecker function and $\nabla_{k, l}$ acts on the first spectral variable.
\end{theorem}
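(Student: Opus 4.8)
The strategy is to make the computation of the triple Massey product $\tilde{r}_J(v,y)$ explicit using the automorphy factor $J = J_n(1)$, exactly as in the diagonal case but now keeping track of the Jordan block structure. First I would fix $y_1 = 0$, $y_2 = y$ (using Proposition \ref{P:transl-invariance}) and describe the space $\Sol = \Sol_{J,\, v,\, 0,\, \tau}$ concretely. An element $\Phi(z) = \bigl(a_{ij}(z)\bigr) \in \Sol$ must satisfy $\Phi(z+1) = \Phi(z)$ and $\Phi(z+\tau) J = e(z) J \Phi(z)$, where $e(z) = -\exp\bigl(-2\pi i(z + v + \tau)\bigr)$. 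Writing out this matrix equation with $J = \mathbbm{1} + \eta$, where $\eta$ is the nilpotent shift matrix, gives a triangular system of functional equations for the $a_{ij}(z)$: the ``diagonal'' entries (those on a fixed superdiagonal $j - i = \text{const}$) satisfy the theta-type functional equation from Lemma \ref{L:basic-for-theta} up to a shift, while the off-diagonal corrections are governed by lower-order terms. Solving this system I expect each $a_{ij}(z)$ to be a combination of $\bar\theta\bigl(z + v + \tfrac{\tau+1}{2}\bigr)$ and its derivatives $\nabla^m \bar\theta$, with coefficients built from the entries $a_k = \tfrac{(-1)^k}{k}$ appearing in $A_0, A_k$ — this is precisely where the matrix $N$ of Definition \ref{D:defin-of-matrixN} enters, encoding the recursion.

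Concretely, I would organize the $n^2$ matrix entries into a vector indexed so that the action of $J$ by left and right multiplication, combined with the shift by $\tau$, becomes multiplication by $\exp(\nabla N)$ applied formally to a generating solution. The key computational identity will be that the general solution $\Phi \in \Sol$ with $\res_0(\Phi) = e_{k,l}$ (a basis matrix) has entries given by applying the operator $\exp(\nabla N)$ to $\bar\theta\bigl(\ARG + v + \tfrac{\tau+1}{2}\bigr)$ in the appropriate coordinate — this is the content of the formula $\nabla_{k,l} = e^t_{n(n-k-1)+l+1}\,\exp(\nabla N)\,e_{n(n-1)+1}$. Then $\ev_y(\Phi) = \tfrac{1}{\bar\theta(y + \frac{\tau+1}{2})}\Phi(y)$ is computed by evaluating these theta-derivatives at $z = y$, and the standard conversion $\bar\theta(z + \tfrac{\tau+1}{2}) = i\exp\bigl(-\pi i(z + \tfrac{\tau}{4})\bigr)\theta(z)$ (used already in the diagonal case) turns the ratio of theta-functions and their derivatives into $\nabla_{k,l}$ applied to the Kronecker function $\sigma(v,y)$, up to the same universal nonzero constant $\tfrac{\exp(\pi i \tau/4)}{i\theta'(0)}$ that can be neglected. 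Reading off the tensor $r_J(v,y)$ via $\can: X \otimes Y \mapsto (Z \mapsto \Tr(XZ)Y)$ then produces the double sum over $e_{i,j+k} \otimes e_{j,i+l}$, where the index constraints $1 \le i \le n-l$, $1 \le j \le n-k$ reflect which matrix entries are actually nonzero given the triangular shape of $A_0$ and the sizes of the blocks.

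\textbf{Main obstacle.} The hard part will be verifying that the recursion for the $a_{ij}(z)$ is \emph{exactly} the one encoded by $\exp(\nabla N)$ — that is, matching the bookkeeping of the Jordan block acting on both sides of $\Phi(z+\tau)J = e(z)J\Phi(z)$ against the precise definition of $N$ (the placement of $A_0$ on the diagonal, $A_k$ on the superdiagonals, and the coefficients $a_k = (-1)^k/k$). This requires carefully expanding $J^{-1}\Phi(z+\tau)J$, or equivalently solving $\Phi(z+\tau) = e(z)\,J\Phi(z)J^{-1}$, and recognizing that conjugation by $J$ plus the theta-shift operator assembles into the single matrix exponential; the logarithmic form $a_k = (-1)^k/k$ strongly suggests that $N$ is (a block version of) $\log J$ scaled appropriately, so the cleanest route is probably to first prove $A_0 = $ (strictly lower triangular part of) a logarithm and then check that the whole block matrix $N$ is the correct operator governing how $\res_0$ and $\ev_y$ intertwine. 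Once the entries of a general $\Phi \in \Sol$ are pinned down in terms of $\exp(\nabla N)$, the remaining steps — evaluation at $y$, the theta-to-$\sigma$ conversion, and applying $\can$ — are routine and parallel the diagonal computation already carried out in the excerpt.
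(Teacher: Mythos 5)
Your outline follows the same general route as the paper (reduce to $y_1=0$, $y_2=y$; describe $\Sol_{J,v,0,\tau}$ via the automorphy--factor formalism; bring in $\exp(\nabla N)$; convert theta quotients to $\sigma$), and your guess that $N$ is essentially a logarithm is correct: the paper's Lemma \ref{L:computeHoms} identifies $\Sol_{J,\,v,\,0,\,\tau}$ with $H^0\bigl(\kE(e(z)\cdot J\otimes\widetilde{J})\bigr)$ and shows $J\otimes\widetilde{J}=\exp(N)$, after which the Polishchuk--Zaslow description (Proposition \ref{L:PolishchukZaslow}) gives sections as $\exp(\nabla N)\bar\theta_v$ applied to vectors of $\CC^{n^2}$. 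But there is a genuine gap in your middle step. The map $u\mapsto\exp(\nabla N)\bar\theta_v(z)\,u$ is a \emph{parametrization} $\Delta:\CC^{n^2}\to\Sol$; it is \emph{not} the inverse of $\res_0$. The element of $\Sol$ whose value at $0$ is $e_{k,l}$ is not obtained by reading off a coordinate of $\exp(\nabla N)\bar\theta_v$: already for $n=2$ the generator $U=\Delta(e_{n(n-1)+1})$ has $\res_0(U)=\bigl(\begin{smallmatrix}\nabla\bar\theta_v(0)&-\nabla^2\bar\theta_v(0)\\ \bar\theta_v(0)&-\nabla\bar\theta_v(0)\end{smallmatrix}\bigr)$, a full lower--triangular--type matrix rather than a multiple of $e_{2,1}$. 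Inverting $\res_0$ is where most of the work in the paper lies: one builds the basis $F_{ij}=K^{n-i}UK^{j-1}$ ($K=J_n(0)$), exploits its staircase support to get a triangular linear system, and solves it by a recursion whose closed form is a sum over lattice paths $\mathcal{W}^{\chi}_{(p,q),(i,j)}$ with coefficients $(-1)^{\chi}\bar\theta_v(0)^{-(\chi+1)}\prod_s(F_{\alpha_{s+1},\beta_{s+1}}(0))_{\alpha_s,\beta_s}$.

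Consequently the final step is also not ``routine and parallel to the diagonal computation.'' After composing with $\ev_y$ one obtains, for each tensor $e_{i,j+k}\otimes e_{j,i+l}$, a coefficient that is a path--indexed polynomial in the derivatives $\nabla^r\bar\theta_v(y)$ and $\nabla^{r_s}\bar\theta_v(0)$ divided by powers of $\bar\theta_v(0)$. Recognizing this as $\nabla_{k,l}$ applied to the single quotient $\bar\theta_v(y)/\bigl(\bar\theta_v(0)\,\bar\theta(y+\tfrac{\tau+1}{2})\bigr)$ --- and hence, via $\bar\theta(z+\tfrac{1+\tau}{2})=i\,e^{-\pi i(z+\tau/4)}\theta(z)$, as $\nabla_{k,l}\sigma(v,y)$ up to the universal constant --- requires a generalized Leibniz/quotient--rule identity (the paper's Lemma \ref{L:heavy-exercise}) together with a combinatorial lemma on powers of $N$ and concatenation of paths (Lemma \ref{lem: powers of N}). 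These two identities are exactly what match the path sums produced by inverting $\res_0$ against the derivatives of the denominator $\bar\theta_v(0)$ generated by $\nabla_{k,l}$ acting on a quotient; without them the coefficients do not visibly assemble into $\nabla_{k,l}\sigma$. Your plan needs to supply this inversion and matching step explicitly; as written, it asserts the conclusion of that step rather than proving it.
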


\begin{remark}
Let $1 \le a, b, c, d \le n$. Then the coefficient of the tensor $e_{a, b} \otimes e_{c, d}$
in the expression for $r_J(v, y)$ from Equation (\ref{E:real-beauty}) is zero unless
$d \ge a$ and $b \ge c$. Moreover, this coefficient depends only on the differences
$d-a$ and $b -c$.
\end{remark}

\begin{example}\label{exa: M(2,0) ass. r-Matrix} Let $n=2$ and $J = \left(\begin{smallmatrix} 1 & 1 \\
0 & 1 \end{smallmatrix}\right)$.  Note that
\[  N = \left(
{\begin{tabular}{r c | r c}
0 & 0 & 1 & 0\\
-1 & 0 & 0 & 1\\ \hline
0 & 0 & 0 & 0\\
0 & 0 & -1 & 0
\end{tabular}}
\right)
, \quad
N^2= \left(
{\begin{tabular}{c c | r c}
0 & 0 & 0 & 0\\
0 & 0 & -2 & 0\\ \hline
0 & 0 & 0 & 0\\
0 & 0 & 0 & 0
\end{tabular}}
\right)
\] and that all higher powers of $N$ are zero. Hence,
\[
\mbox{exp}(\nabla\, N)=1+\nabla\, N+\frac{\nabla^{2}\, N^{2}}{2}
=
\left(
\begin{array}{rc|cc}
\mathbbm{1}  & 0 & \nabla & 0 \\
-\nabla & \mathbbm{1} & - \nabla^2 & \nabla \\
\hline
0 & 0 & \mathbbm{1} & 0 \\
0 & 0 & - \nabla & \mathbbm{1}
\end{array}
\right)
\]
 and we derive that
\begin{align*}
r_J(v, y) \; = \;   &
\sigma(v, y) \bigl(e_{11} \otimes e_{11} + e_{22} \otimes e_{22} +
e_{12} \otimes e_{21} + e_{21} \otimes e_{12}\bigr) +  \\
  & \nabla \sigma(v, y)
\bigl(e_{12} \otimes h  - h \otimes e_{12}\bigr) -
 \nabla^2 \sigma (v, y) e_{12} \otimes e_{12},
\end{align*}
where $h = e_{11} - e_{22}$.
\end{example}

\begin{remark}
From the fact that the function $r_J(v, y)$ from Example \ref{exa: M(2,0) ass. r-Matrix}
satisfies the associative Yang--Baxter equation (\ref{E:AYBE}) we obtain the
following identity
for  derivatives of the Kronecker function with respect to the first spectral variable:
\begin{align*}
\sigma'(u, x+y) \sigma'(v, y) - \sigma'(u, x) \sigma'(u+v, y) -
\sigma'(-v, x) \sigma'(u+v, x+y)  \\ = \sigma(u, x) \sigma''(u+v, y) -
\sigma(-v, x) \sigma''(u+v, x+y).
\end{align*}
\end{remark}


\begin{example}
For $n=3$ and $J = \left(\begin{smallmatrix} 1 & 1 & 0 \\
0 & 1  & 1 \\ 0 & 0 & 1\end{smallmatrix}\right)$ we have \[  N = \left(
{\begin{tabular}{r c c| r c c| r r r}
0 & 0 & 0 & 1 & 0 & 0 & -$\frac{1}{2}$ & 0 & 0\\
-1 & 0 & 0 & 0 & 1 & 0 & 0 & -$\frac{1}{2}$ & 0\\
$\frac{1}{2}$ & -1 & 0 & 0 & 0 & 1 & 0 & 0 & -$\frac{1}{2}$\\ \hline
0 & 0 & 0 & 0 & 0 & 0 & 1 & 0 & 0\\
0 & 0 & 0 & -1 & 0 & 0 & 0 & 1 & 0\\
0 & 0 & 0 & $\frac{1}{2}$ & -1 & 0 & 0 & 0 & 1\\ \hline
0 & 0 & 0 & 0 & 0 & 0 & 0 & 0 & 0\\
0 & 0 & 0 & 0 & 0 & 0 & -1 & 0 & 0\\
0 & 0 & 0 & 0 & 0 & 0 & $\frac{1}{2}$ & -1 & 0
\end{tabular}}
\right).
\]
Note that \[
\mbox{exp}(\nabla\, N)\, e_{7}=\left(\begin{array}{c}
0\\
0\\
0\\
0\\
0\\
0\\
1\\
0\\
0\end{array}\right)+
\left(\begin{array}{r}
-\frac{1}{2}\\
0\\
0\\
1\\
0\\
0\\
0\\
-1\\
\frac{1}{2}\end{array}\right) \nabla +
\left(\begin{array}{r}
1\\
1\\
-\frac{1}{2}\\
0\\
-2\\
1\\
0\\
0\\
1\end{array}\right)\frac{\nabla^{2}}{2}+
\left(\begin{array}{r}
0\\
-3\\
0\\
0\\
0\\
3\\
0\\
0\\
0\end{array}\right)\frac{\nabla^{3}}{6}+
\left(\begin{array}{r}
0\\
0\\
6\\
0\\
0\\
0\\
0\\
0\\
0\end{array}\right)\frac{\nabla^{4}}{24}\]
and that $\nabla_{k, l}=e_{3(2-k)+l+1}^{t}\bigl(\mbox{exp}(\nabla\, N)\, e_{7}\bigr)$. Carrying out computations,
we end up with the following solution of the associative Yang--Baxter equation:
\begin{align*}
r_J(v, y) = &   \sigma \sum_{1\leq i, j \leq 3} e_{i,j}\otimes e_{j,i}
+ \nabla\sigma \sum_{\begin{smallmatrix} 1\leq i\leq3\\
1\leq j\leq2\end{smallmatrix}} \bigl(e_{i,j+1}\otimes e_{j,i} - e_{j,i} \otimes e_{i,j+1}\bigr) + \\
& \Bigl(-\frac{1}{2}\nabla+\frac{1}{2}\nabla^{2}\Bigr)\sigma
\sum_{1\leq i\leq 3} e_{i,3}\otimes e_{1,i} + \Bigl(\frac{1}{2}\nabla+\frac{1}{2}\nabla^{2}\Bigr)\sigma\sum_{1\leq i\leq 3} e_{1,i} \otimes e_{i,3} \\
& \Bigl(\frac{1}{2}\nabla^{2}-\frac{1}{2}\nabla^{3}\Bigr)\sigma
\sum_{1\leq i\leq 2}e_{i,3}\otimes e_{1,i+1} + \Bigl(\frac{1}{2}\nabla^{2}+\frac{1}{2}\nabla^{3}\Bigr)\sigma
\sum_{1\leq i\leq 2}  e_{1,i+1} \otimes e_{i,3} + \\
&
-\nabla^{2}\sigma   \sum_{1\leq i, j \leq 2} e_{i,j+1}\otimes e_{j,i+1} + \Bigl(-\frac{1}{4}\nabla^{2}+\frac{1}{4}\nabla^{4}\Bigr)\sigma  e_{1,3}\otimes e_{1,3},
\end{align*}
where $\sigma = \sigma(v, y)$ is the Kronecker function.
\end{example}


\medskip
\noindent
\emph{Proof of Theorem \ref{thm: semi-simple main formula}}. We divide the proof into several steps.

\medskip
\noindent
\textbf{Computation of a basis of $\Sol$}.
First,  we compute a basis of the vector space
\begin{equation}
 \Sol_{J, \, v, \, 0,\, \tau} :=
\left\{\Phi: \CC \to \Mat_{m \times n}(\CC)
\left|
\begin{array}{l}
\Phi \mbox{\rm{\quad is holomorphic}} \\
\Phi(z+1) = \Phi(z) \\
\Phi(z+\tau) J = e(z) J \Phi(z)
\end{array}
\right.
  \right\},
\end{equation}
where $e(z) = e(z, v, \tau)  = - \exp\bigl(-2\pi i (z + v + \tau)\bigr)$. The proof of the following result
is straightforward. 

\begin{lemma}\label{L:computeHoms}
Let $C: \CC \rightarrow \GL_{n}(\CC)$ and $D: \rightarrow \GL_{m}(\CC)$ be a pair of automorphy factors.
Let $\widetilde{D} := \bigl(D^{-1}\bigr)^{t}$ be the transpose of the inverse matrix
of $D$. Next, we set
$$
C \otimes \widetilde{D} =
\left(
\begin{array}{ccc}
c_{11} \widetilde{D} & \dots & c_{1n} \widetilde{D} \\
\vdots & \ddots & \vdots \\
c_{n1} \widetilde{D} & \dots & c_{nn} \widetilde{D}
\end{array}
\right).
$$
\begin{enumerate}
\item 
In   the notations of Theorem \ref{T:AutomFactors},  we have  isomorphisms
$$
\Hom\bigl(\kE(C), \, \kE(D)\bigr) \stackrel{\cong}\lar \Sol_{C, \,  D} 
\stackrel{\alpha}\lar \Sol_{(1), \, C \otimes \widetilde{D}}
\stackrel{\cong}\lar  H^0\bigl(\kE(C \otimes \widetilde{D})\bigr),
$$
where for  $\Phi = \bigl(f_{ij}\bigr)_{1 \le i, j \le n} \in \Sol_{C, \,  D}$ we set 
$\alpha\bigl(\Phi\bigr) = \bigl(f_{n(i-1) + j}\bigr)_{1 \le i, j \le n}$. 
\item We have: $J \otimes \widetilde{J} = \exp(N)$, where $N$ is the matrix from Definition
\ref{D:defin-of-matrixN}.
\end{enumerate}
\end{lemma}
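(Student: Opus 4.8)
The statement splits into two independent parts, which I would treat separately. For part (1) the first arrow $\Hom\bigl(\kE(C),\kE(D)\bigr)\to\Sol_{C,D}$ is literally Theorem \ref{T:AutomFactors}(2), and the last arrow is the same theorem applied with source the trivial line bundle: the constant factor $(1)$ defines $\kE\bigl((1)\bigr)\cong\kO_{E}$, so $\Sol_{(1),\,C\otimes\widetilde D}$ is by construction (cf. Remark \ref{R:TrivialAutomFact}) exactly the space $H^{0}\bigl(\kE(C\otimes\widetilde D)\bigr)$ of global holomorphic sections. Hence the whole of part (1) reduces to checking that the reshuffling map $\alpha$, which rearranges the entries $f_{ij}$ of the matrix-valued function $\Phi$ into a single column vector of length $mn$, is a well-defined linear isomorphism $\Sol_{C,D}\xrightarrow{\ \sim\}\Sol_{(1),\,C\otimes\widetilde D}$.

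That $\alpha$ is a linear bijection on holomorphic $\CC^{mn}$-valued functions, and that it trivially intertwines the periodicity condition $\Phi(z+1)=\Phi(z)$, is immediate; the only content is the behaviour under $z\mapsto z+\tau$. Rewriting the defining relation of $\Sol_{C,D}$ as $\Phi(z+\tau)=D(z)\,\Phi(z)\,C(z)^{-1}$ and applying the elementary vectorization identity $\mathrm{vec}(A X B)=(A\otimes B^{t})\,\mathrm{vec}(X)$ turns this into $\alpha(\Phi)(z+\tau)=\bigl(C\otimes\widetilde D\bigr)(z)\,\alpha(\Phi)(z)$; the transpose inverse $\widetilde D=(D^{-1})^{t}$ appears precisely because $\Phi(z)$ is multiplied by $C(z)^{-1}$ on the right. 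The only care needed is to take the $\mathrm{vec}$-convention (row-wise versus column-wise stacking, and hence the order of the two Kronecker factors) in agreement with the index rule $f_{ij}\mapsto f_{n(i-1)+j}$ and with the explicit block form of $C\otimes\widetilde D$ written down in the statement; in the case $C=D$ relevant for the application this matching is automatic.

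For part (2) I would pass to logarithms. Since $J\otimes\widetilde J=(J\otimes\mathbbm{1})(\mathbbm{1}\otimes\widetilde J)$ is a product of two commuting unipotent matrices, one has $\log\bigl(J\otimes\widetilde J\bigr)=\log(J)\otimes\mathbbm{1}+\mathbbm{1}\otimes\log(\widetilde J)$, where all logarithms and exponentials are honest polynomial expressions. Writing $J=\mathbbm{1}+E$ with $E$ the nilpotent shift matrix gives $\log J=\sum_{k\ge1}\tfrac{(-1)^{k-1}}{k}E^{k}=-\sum_{k=1}^{n-1}a_{k}E^{k}$, so $\log(J)\otimes\mathbbm{1}$ is block upper triangular with $-a_{k}\mathbbm{1}=A_{k}$ on its $k$-th block superdiagonal. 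Similarly $\log\widetilde J=\log\bigl((J^{-1})^{t}\bigr)=-(\log J)^{t}=\sum_{k=1}^{n-1}a_{k}(E^{t})^{k}=A_{0}$, so $\mathbbm{1}\otimes\log\widetilde J$ is block diagonal with every diagonal block equal to $A_{0}$. Adding these two matrices reproduces exactly the matrix $N$ of Definition \ref{D:defin-of-matrixN}, whence $J\otimes\widetilde J=\exp(N)$.

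Both parts are thus matters of routine bookkeeping rather than of any conceptual difficulty. In part (2) one must only verify that $\sum_{k}a_{k}(E^{t})^{k}$ really is the matrix $A_{0}$ and that the block positions of $E^{k}\otimes\mathbbm{1}$ and $\mathbbm{1}\otimes A_{0}$ match the display of $N$. The one genuinely error-prone point, which I expect to be the main obstacle, is aligning the order of the two Kronecker factors and the transposes in part (1) with the block description of $C\otimes\widetilde D$ fixed in the statement.
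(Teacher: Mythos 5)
The paper offers no proof of this lemma at all --- it is introduced with the remark that the proof is straightforward --- so there is nothing to compare against; your argument supplies precisely the routine verification being omitted, and both parts are correct. One point worth recording: a literal application of your vectorization identity $\mathrm{vec}(AXB)=(A\otimes B^{t})\,\mathrm{vec}(X)$ with $A=D(z)$ and $B=C(z)^{-1}$ produces the automorphy factor $D\otimes\widetilde{C}$ rather than $C\otimes\widetilde{D}$, so for $C\neq D$ the two tensor factors in the statement should really be read in the opposite order; as you yourself note, this is immaterial in the only case the paper uses ($C=D=J$), and your logarithm computation in part (2), which correctly identifies $N=\log J\otimes\mathbbm{1}+\mathbbm{1}\otimes\log\widetilde{J}$ and exponentiates the two commuting summands, is unaffected.
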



\noindent
The following result is due to Polishchuk and Zaslow \cite[Proposition 2]{PolishchukZaslow}.

\begin{proposition}\label{L:PolishchukZaslow}
As above, let  $\nabla = -\frac{1}{2 \pi i} \cdot \frac{d}{dz}$
and $e(z) = - \exp\bigl(-2\pi i (z + v + \tau)\bigr)$.
Then we have an isomorphism of vector spaces:
$$
\delta: H^0\Bigl(\kE\bigl(e(z)\bigr)\Bigr) \otimes \CC^{n^2} \lar
H^0\Bigl(\kE\bigl(e(z) \cdot \exp(N)\bigr)\Bigr)
$$
given by the rule
$
\delta(f \otimes u) = \bigl(\exp(\nabla N) f\bigr) u =
\sum_{m = 0}^\infty \frac{\nabla^m(f)}{m!} N^m(u)
$
for any $f \in H^0\Bigl(\kE\bigl(e(z)\bigr)\Bigr)$ and $u \in \CC^{n^2}$.
\end{proposition}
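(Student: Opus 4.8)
The plan is to verify the formula for $\delta$ directly and then to exhibit an explicit inverse operator, so that the isomorphism claim follows by pure computation (I would also record, as a sanity check, that both sides have dimension $n^2$). Throughout I abbreviate $e(z)=c\,e^{-2\pi i z}$ with the constant $c=-\exp\bigl(-2\pi i(v+\tau)\bigr)$, and use that $N$ is nilpotent so that all exponential series below are finite sums. For well-definedness, fix $f\in H^0\bigl(\kE(e(z))\bigr)$ — a holomorphic $1$-periodic function with $f(z+\tau)=e(z)f(z)$ — and $u\in\CC^{n^2}$, and set $F(z):=\bigl(\exp(\nabla N)f\bigr)(z)\,u$. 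One-periodicity of $F$ is immediate. For the $\tau$-relation the two facts I need are that $\nabla$ commutes with the shift $z\mapsto z+\tau$ and that $\nabla\bigl(e^{-2\pi i z}\bigr)=e^{-2\pi i z}$, which is precisely why the normalization $\nabla=-\frac{1}{2\pi i}\frac{d}{dz}$ is the correct one. The Leibniz rule then gives
$$
(\nabla^m f)(z+\tau)=\nabla^m\bigl(e(z)f(z)\bigr)=e(z)\sum_{k=0}^{m}\binom{m}{k}(\nabla^k f)(z),
$$
and substituting this into $F(z+\tau)$, reindexing by $m=k+l$, and recognizing the two resulting sums as $\exp(N)$ and $\exp(\nabla N)f\cdot u$ yields $F(z+\tau)=e(z)\exp(N)F(z)$, i.e.\ $F\in H^0\bigl(\kE(e(z)\exp(N))\bigr)$.

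To see that $\delta$ is bijective, I view $\exp(\nabla N)$ as an operator on $\CC^{n^2}$-valued meromorphic functions — $\nabla$ acting componentwise, $N$ by left multiplication — which, since these two commute and $N$ is nilpotent, is unipotent with two-sided inverse $\exp(-\nabla N)$. Running the previous computation verbatim for vector-valued inputs (using $\nabla^m(e(z)\Phi)=e(z)\sum_k\binom{m}{k}\nabla^k\Phi$) gives the converse: if $F(z+\tau)=e(z)\exp(N)F(z)$ then $G:=\exp(-\nabla N)F$ satisfies $G(z+\tau)=e(z)G(z)$, so every component of $G$ lies in the one-dimensional space $H^0\bigl(\kE(e(z))\bigr)$ and hence $G=f_0\otimes u$ for its generator $f_0$ and some $u\in\CC^{n^2}$. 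Therefore $\delta$ and $F\mapsto\exp(-\nabla N)F$ are mutually inverse bijections. This is consistent with the dimension count: $\kE(e(z))$ is a degree-one line bundle on the genus-one curve $E$, so $h^0=1$ (cf.\ Lemma \ref{L:basic-for-theta}), while $\kE(e(z)\exp(N))\cong\kE(e(z))\otimes\kE(\exp(N))$ is semistable of rank $n^2$ and slope one, hence has $h^0=n^2$ and $h^1=0$.

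I do not expect a serious obstacle: once one commits to the operator calculus the statement is essentially formal. The only place requiring care is the Leibniz bookkeeping in the first step — tracking the scalar $c$, using that every factor $\nabla^j\bigl(e^{-2\pi i z}\bigr)$ equals $e^{-2\pi i z}$, and checking that the double sum collapses to $\exp(N)$ on the correct side so that the automorphy relation comes out as $F(z+\tau)=e(z)\exp(N)F(z)$ rather than with $\exp(N)^{-1}$. After that, the fact that $\exp(\nabla N)$ and $\exp(-\nabla N)$ are mutually inverse operators does all the remaining work.
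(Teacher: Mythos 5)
Your proof is correct, but it is worth noting that the paper does not actually prove this statement: it is quoted from Polishchuk--Zaslow \cite{PolishchukZaslow} and used as a black box, so your argument supplies details the paper omits. The verification is sound on both counts. For well-definedness, the essential points are exactly the ones you isolate: $\nabla\bigl(e^{-2\pi i z}\bigr)=e^{-2\pi i z}$, hence $\nabla^m\bigl(e(z)f(z)\bigr)=e(z)\sum_k\binom{m}{k}\nabla^k f(z)$, and the reindexing $m=k+l$ with $\frac{1}{m!}\binom{m}{k}=\frac{1}{k!\,l!}$ splits the double sum into $\exp(N)$ times $\exp(\nabla N)f\cdot u$, with $\exp(N)$ landing on the left as required (the two factors commute, being powers of the same $N$, so there is no ordering ambiguity). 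For bijectivity, your use of $\exp(-\nabla N)$ as a two-sided inverse on vector-valued functions, combined with the one-dimensionality of $H^0\bigl(\kE(e(z))\bigr)$ from Lemma \ref{L:basic-for-theta} (after the shift identifying $e(z)$ with $\varphi(z+v+\frac{\tau+1}{2})$), correctly reduces every section of $\kE\bigl(e(z)\exp(N)\bigr)$ to the form $\delta(f_0\otimes u)$; in the backward computation one should note that $N^l$ commutes with the constant matrix $\exp(N)$ so that the sums factor as $\exp(-N)\exp(N)=\mathbbm{1}$, but this is immediate. The concluding dimension count ($h^0=1$ versus $h^0=n^2$ for a slope-one semistable bundle of rank $n^2$) is a valid independent check, consistent with Lemma \ref{L:easyfact}. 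What your approach buys is a self-contained, purely computational proof that does not require invoking the external reference; what the citation buys the authors is brevity and the contextual link to the mirror-symmetry origin of the statement.
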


\begin{proposition}\label{P:comput-of-Sol}
Let $\bar{\theta}_v(z) = \bar{\theta}(z + v + \frac{\tau+1}{2})$. Then
we have an isomorphism of vector spaces
$\Delta: \CC^{n^2} \rightarrow \Sol_{J,\, v, \, 0,\, \tau}$ mapping  a vector
$u \in \CC^{n^2}$ to the matrix-valued function  $\Delta(u)$, where for any $1 \le k, l \le n$
we have:
$$
\bigl(\Delta(u)\bigr)_{k, \, l}(z) =
e^t_{n(k-1)+l} \bigl(\exp(\nabla N) \bar{\theta}_v(z)\bigr) u.
$$
 \end{proposition}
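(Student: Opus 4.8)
The plan is to chain together the three isomorphisms already established in the excerpt and track how an element of $\CC^{n^2}$ travels through them. By Lemma \ref{L:computeHoms}(1) applied to $C = e(z)\cdot J$ and $D = \mathbbm{1}$ (so that $\widetilde{D}=\mathbbm{1}$ and $C\otimes\widetilde{D}$ is scalar-times-$J$), or more directly by unravelling the defining equivariance condition, an element $\Phi\in\Sol_{J,\,v,\,0,\,\tau}$ corresponds to a column vector of $n^2$ holomorphic functions $\bigl(f_{n(k-1)+l}(z)\bigr)$ on $\CC$ satisfying the transformation law governed by the automorphy factor $e(z)\cdot\exp(N)$, using Lemma \ref{L:computeHoms}(2) which identifies $J\otimes\widetilde{J}=\exp(N)$. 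Concretely: unpacking the relation $\Phi(z+\tau)J = e(z)J\Phi(z)$ and conjugating appropriately shows that the reindexed vector of matrix entries lies in $H^0\bigl(\kE(e(z)\cdot\exp(N))\bigr)$.

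Next I would invoke Proposition \ref{L:PolishchukZaslow}: every element of $H^0\bigl(\kE(e(z)\cdot\exp(N))\bigr)$ is of the form $\delta(f\otimes u) = \bigl(\exp(\nabla N)f\bigr)u$ for a unique $f\in H^0\bigl(\kE(e(z))\bigr)$ and a unique $u\in\CC^{n^2}$. By Lemma \ref{L:basic-for-theta} the space $H^0\bigl(\kE(e(z))\bigr)$ — after the translation-by-$(v+\tfrac{\tau+1}{2})$ bookkeeping built into the definition of $e(z)$ versus $\varphi(z)=\exp(-\pi i\tau-2\pi iz)$ — is one-dimensional, spanned by $\bar\theta_v(z) = \bar\theta(z+v+\frac{\tau+1}{2})$. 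Indeed one checks that $\bar\theta_v(z+1)=\bar\theta_v(z)$ and $\bar\theta_v(z+\tau)=e(z)\,\bar\theta_v(z)$ directly from the quasi-periodicity of $\theta_3$, so $f$ may be taken to be a scalar multiple of $\bar\theta_v$. Hence every element of $\Sol_{J,\,v,\,0,\,\tau}$ is, after the identifications, of the form $\bigl(\exp(\nabla N)\bar\theta_v(z)\bigr)u$ for a unique $u\in\CC^{n^2}$, and reading off the $(k,l)$ matrix entry via the reindexing $n(k-1)+l$ gives exactly the stated formula $\bigl(\Delta(u)\bigr)_{k,l}(z) = e^t_{n(k-1)+l}\bigl(\exp(\nabla N)\bar\theta_v(z)\bigr)u$.

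It remains to verify that $\Delta$ so defined is a well-defined linear isomorphism $\CC^{n^2}\to\Sol_{J,\,v,\,0,\,\tau}$. Linearity is immediate since $u\mapsto (\exp(\nabla N)\bar\theta_v)u$ is linear. Well-definedness means checking that $\Delta(u)$ really satisfies the three conditions defining $\Sol$: holomorphy is clear ($\exp(\nabla N)$ is a polynomial differential operator in $\nabla$, $N$ being nilpotent, applied to the entire function $\bar\theta_v$); the periodicity $\Delta(u)(z+1)=\Delta(u)(z)$ follows from $\bar\theta_v(z+1)=\bar\theta_v(z)$ and the fact that $\nabla$ and $N$ commute with translation; and the equivariance $\Delta(u)(z+\tau)J = e(z)J\Delta(u)(z)$ is exactly the statement that $(\exp(\nabla N)\bar\theta_v)u\in H^0\bigl(\kE(e(z)\exp(N))\bigr)$ translated back through $J\otimes\widetilde J=\exp(N)$. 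Injectivity follows from the uniqueness clause in Proposition \ref{L:PolishchukZaslow}, and surjectivity follows by running the chain of identifications in reverse together with the dimension count $\dim_\CC\Sol_{J,\,v,\,0,\,\tau}=n^2$ from Corollary \ref{C:dim-of-Sol} (so an injective linear map between spaces of equal dimension is an isomorphism). The one point requiring genuine care — the main obstacle — is the bookkeeping of the various shifts and conjugations: one must confirm that the automorphy factor appearing after applying Lemma \ref{L:computeHoms} to the pair $(e(z)J,\mathbbm{1})$ is precisely $e(z)\cdot\exp(N)$ (not its transpose-inverse or a conjugate), and that the generator of the one-dimensional space matches $\bar\theta_v$ with exactly the shift $v+\frac{\tau+1}{2}$ dictated by the definition of $e(z)$; once the conventions are pinned down the rest is formal.
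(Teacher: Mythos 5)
Your argument is correct and follows the paper's own proof exactly: it chains Lemma \ref{L:computeHoms} (identifying $\Sol_{J,\,v,\,0,\,\tau}$ with $H^0\bigl(\kE(e(z)\exp(N))\bigr)$ via $J\otimes\widetilde{J}=\exp(N)$), Lemma \ref{L:basic-for-theta} (one--dimensionality of $H^0\bigl(\kE(e(z))\bigr)$ with generator $\bar{\theta}_v$), and Proposition \ref{L:PolishchukZaslow}, which is precisely the paper's two-line proof with the bookkeeping written out. The only quibble is your parenthetical choice $C=e(z)\cdot J$, $D=\mathbbm{1}$, which does not literally match the equivariance condition $\Phi(z+\tau)J=e(z)J\Phi(z)$ (the relevant pairing is $\Sol_{J,\,e(z)J}$), but your fallback of unravelling the condition directly takes care of this.
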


\begin{proof}
By Lemma \ref{L:basic-for-theta},  the  vector space $H^0\bigl(\kE(e(z))\bigr)$ is one--dimensional and
$\bar{\theta}_v(z) = \bar{\theta}(z + v + \frac{\tau+1}{2})$ is its basis element. Hence, 
 Proposition  \ref{P:comput-of-Sol} is a consequence of 
 Lemma \ref{L:computeHoms}  and Proposition
\ref{L:PolishchukZaslow}.
\end{proof}

\begin{definition}
In the notations of Proposition \ref{P:comput-of-Sol}, let $U$ be the element
of $\Sol = \Sol_{J, \, v, \, 0, \, \tau}$ corresponding to  $u = e_{n(n-1)+1} \in \CC^{n^2}$. 
Note that $\bigl(U(z)\bigr)_{n, 1} = \bar{\theta}_v(z)$.
\end{definition}

\begin{proposition}\label{P:basis-of-Sol} Let $K = J_n(0)$ be the Jordan block of size $n \times n$ with eigenvalue zero.
For all $1 \le i, j \le n$ we set
$
F_{ij} = K^{n-i} U K^{j-1}.
$
Then we have:
\begin{enumerate}
\item\label{It:it1} All matrix--valued functions $F_{i j}: \CC \rightarrow \Mat_{n \times n}(\CC)$ belong to $\Sol$.
\item\label{It:it2} If $1 \le p, \, q \le n$ are such that  $i < p \le n$ or  $1 \le q < j$ 
then we have:
$\bigl(F_{i j }\bigr)_{p, \, q} = 0$. Moreover,
$\bigl(F_{i j }\bigr)_{i, \, j} = \bar{\theta}_v$. In other words, all non-zero entries
of $F_{ij}$ are located in  the rectangle  whose lower left corner is $(i, j)$.
\item\label{It:it3} Moreover, $\bigl\{F_{i j}\bigr\}_{1 \le i, j \le n}$ is a basis of the vector space $\Sol$.
\end{enumerate}
\end{proposition}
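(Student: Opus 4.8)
The plan is to establish the three assertions in turn, (1) and (2) by direct matrix manipulations and (3) by a triangularity argument based on (2). For (1), I would first record that $\Sol = \Sol_{J,\,v,\,0,\,\tau}$ is stable under left and right multiplication by constant matrices commuting with $J$: if $\Phi \in \Sol$ and $P,Q \in \Mat_{n\times n}(\CC)$ satisfy $PJ = JP$ and $QJ = JQ$, then $\Psi := P\,\Phi\, Q$ is holomorphic, $1$--periodic, and, using successively $QJ=JQ$, the automorphy relation $\Phi(z+\tau)\,J = e(z)\,J\,\Phi(z)$, and $PJ=JP$, one gets $\Psi(z+\tau)\,J = P\,\Phi(z+\tau)\,Q\,J = P\,\Phi(z+\tau)\,J\,Q = e(z)\,P\,J\,\Phi(z)\,Q = e(z)\,J\,\Psi(z)$, so $\Psi \in \Sol$. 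Since $K = J_n(0) = J - \mathbbm{1}$ commutes with $J$, so do $K^{n-i}$ and $K^{j-1}$; and $U \in \Sol$ by construction, hence $F_{ij} = K^{n-i}\, U\, K^{j-1} \in \Sol$ for all $1 \le i,j \le n$, which is (1).

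For (2), I would use the elementary fact that left multiplication by $K$ shifts the rows of a matrix one step upward (appending a zero row at the bottom) and right multiplication by $K$ shifts the columns one step to the right (prepending a zero column); iterating, $(K^{n-i}\, M\, K^{j-1})_{p,q} = M_{p+n-i,\,q-j+1}$ whenever $1 \le p \le i$ and $j \le q \le n$, and $0$ otherwise. Taking $M = U$ immediately gives $(F_{ij})_{p,q} = 0$ whenever $p > i$ or $q < j$, and $(F_{ij})_{i,j} = U_{n,1} = \bar{\theta}_v$; in particular the nonzero entries of $F_{ij}$ all lie in the rectangle with rows $\le i$ and columns $\ge j$, which is exactly (2).

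For (3), since $\dim_\CC \Sol = n^2$ by Corollary \ref{C:dim-of-Sol} and there are exactly $n^2$ functions $F_{ij}$, it suffices to prove their linear independence. Given a relation $\sum_{i,j} c_{ij}\, F_{ij} = 0$ with $c_{ij} \in \CC$, I would eliminate the coefficients one at a time, processing the index pairs in the order $(n,1), (n,2), \dots, (n,n), (n-1,1), \dots, (1,n)$, i.e.~by decreasing $i$ and, for fixed $i$, by increasing $j$. When the pair $(p,q)$ is reached, inspect the $(p,q)$--entry of the relation: by (2), a summand $c_{ij}\, F_{ij}$ can contribute there only if $i \ge p$ and $j \le q$, and every such pair other than $(p,q)$ itself precedes $(p,q)$ in the chosen order and hence already has vanishing coefficient; so the entry collapses to $c_{pq}\,(F_{pq})_{p,q} = c_{pq}\,\bar{\theta}_v \equiv 0$, which forces $c_{pq} = 0$ because $\bar{\theta}_v(z) = \bar{\theta}(z + v + \tfrac{\tau+1}{2})$ is not identically zero. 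After running through all $n^2$ pairs we conclude that $\{F_{ij}\}$ is linearly independent, hence a basis of $\Sol$. The only delicate point is this bookkeeping in (3): one must verify that the support constraints from (2), together with the ``decreasing $i$, then increasing $j$'' ordering, genuinely make the linear system triangular with nonzero diagonal entries equal to $\bar{\theta}_v$. Steps (1) and (2) are themselves routine.
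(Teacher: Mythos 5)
Your proposal is correct and follows essentially the same route as the paper: membership in $\Sol$ via the commutation of $K$ with $J$, the support/corner-entry description via the shifting action of $K$, and linear independence from that support structure combined with the dimension count of Corollary \ref{C:dim-of-Sol}. The only difference is that you spell out the triangular elimination for independence explicitly, which the paper leaves implicit.
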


\begin{proof} The statement  that  $F_{i j}$ belongs to $\Sol$ is equivalent to the equality
\begin{equation}\label{E:onBasis-of-Sol}
K^{n-i} \, U(z+\tau) \, K^{j-1} \, J = e(z) J \, K^{n-i} \, U(z) \, K^{j-1}.
\end{equation}
Since the matrices $K$ and $J$ commute, Equality (\ref{E:onBasis-of-Sol}) is equivalent to
$$
K^{n-i} \bigl(U(z+\tau)J - e(z) J U(z)\bigr) K^{j-1} = 0,
$$
which is true since $U$ belongs to $\Sol$.
The second part of the proposition   follows from the definition of the functions $F_{ij}$.
From  this part  also follows    that all  elements of  the set $\bigl\{F_{i j}\bigr\}_{1 \le i, j \le n}$
are linearly independent. By Corollary \ref{C:dim-of-Sol}, the dimension of $\Sol$ is $n^2$.
Thus, $\bigl\{F_{i j}\bigr\}_{1 \le i, j \le n}$ is a basis of $\Sol$.
\end{proof}

\begin{example} Let $n = 2$.
Similarly to Example \ref{exa: M(2,0) ass. r-Matrix},
we obtain:
 \[
F_{2, 1} = U =\left(\begin{array}{cc}
\nabla\bar{\theta}_{v} & -\nabla^{2}\bar{\theta}_{v}\\
\bar{\theta}_{v} & -\nabla\bar{\theta}_{v}
\end{array}\right).
\]
 Moreover, we have:
 \[
F_{1,1} = \left(\begin{array}{cc}
\bar{\theta}_{v}  & -\nabla\bar{\theta}_{v} \\
0 & 0\end{array}\right),\,
F_{2,2} =
\left(\begin{array}{cc}  0 & \nabla\bar{\theta}_{v} \\
0 & \bar{\theta}_{v}
\end{array}\right) \; \mbox{and} \;
F_{1,2}=\left(\begin{array}{cc} 0 & \bar{\theta}_{v}\\
0 & 0\end{array}\right).\]
\end{example}

\medskip
\noindent
\textbf{Computation of $\res^{-1}_0$}.
As the next step, we compute the preimages of the elementary matrices 
$\bigl\{e_{a, b}\bigr\}_{1 \le a, b \le n}$ under the isomorphism
$\res_0: \Sol \rightarrow \Mat_{n \times n}(\CC)$.

\medskip
\noindent
Let $X=\left(x_{p, q}\right)_{1\leq p, q\leq n} \in \Mat_{n \times n}(\CC)$ be a
given matrix and $A \in \Sol$ be such that $\res_0(A) = X$.
By Proposition \ref{P:basis-of-Sol},  we have an expansion
$
A =\sum_{1\leq i,\,  j \leq n }\eta_{i,j}F_{i,j}
$
for certain uniquely determined  $\eta_{i,j}\in\mathbb{C}$. It is clear that  for all $1 \le p, q \le n$
 we get:
\begin{equation}\label{E:recurs-formula}
x_{p, q}=\sum_{\begin{smallmatrix}p\leq i\leq n\\
1\leq j\leq q\end{smallmatrix}}\eta_{i, j}\bigl(F_{i, j}(0)\bigr)_{p, q}.
\end{equation}
Next,  for all $1 \le p, q \le n$ we have
$\bigl(F_{p, q}(z)\bigr)_{p, q}=\bar{\theta}_{v}(z)$.
 This implies that
 \begin{equation}\label{Eq: first eta_p_q formula}
\eta_{p, q}=\frac{1}{\bar{\theta}_{v}(0)}\left[x_{p,q}-\sum_{\begin{smallmatrix}p\leq i\leq n\\
1\leq j\leq q\\
(i,j)\neq(p,q)\end{smallmatrix}}\eta_{i,j}\,\bigl(F_{i,j}(0)\bigr)_{_{p,q}}\right].
\end{equation}
Hence $\eta_{p,q}$ can be expressed as a linear combination of those
 $x_{i,j}$ for which  $p\leq i\leq n$
and $1\leq i\leq q$. Moreover, due to the recursive structure of the Equality 
(\ref{Eq: first eta_p_q formula}), it is clear that   $\eta_{p, \, q}$ can be written as a certain 
linear combination of $x_{i, j}$, whose
structure is  controlled  by the set of paths starting at  $(p,q)$ and ending at 
 $(i,j)$. In order to make this more precise, let us make the following
definition.

\begin{definition}
\label{def:W^t_(p,q),(i,j)} For any $\chi\in\mathbb{N}$ and $(i,j),(p,q)\in\mathbb{N}\times\mathbb{N}$
such that $i \ge p$ and $j \le q$ we denote
\[
\mathcal{W}_{(p,q),(i,j)}^{\chi}=\left\{ \left(\alpha_{s},\beta_{s}\right)_{0\leq s\leq\chi}\in(\mathbb{N}\times\mathbb{N})^{\chi+1}\left|\begin{smallmatrix}\alpha_{s}\leq\alpha_{s+1},\,\beta_{s}\geq\beta_{s+1}\\
\left(\alpha_{s},\beta_{s}\right)\neq\left(\alpha_{s+1},\beta_{s+1}\right)\\
\left(\alpha_{0},\beta_{0}\right)=\left(p,q\right)\\
\left(\alpha_{\chi},\beta_{\chi}\right)=\left(i,j\right)\end{smallmatrix}\right.\right\}. \]
In other words, $\mathcal{W}_{(p,q),(i,j)}^{\chi}$ is the set of all paths of length $\chi$ 
on the square lattice
$\mathbb{N} \times \mathbb{N}$ starting at $(p, q)$, ending at $(i, j)$ and going in the 
``south-west'' direction. 
\end{definition}

\noindent
Applying the recursive formula (\ref{Eq: first eta_p_q formula}), we end up with the following result.

\begin{lemma} 
Let $X=\bigl(x_{i, j}\bigr)_{1\leq i, j\leq n} \in \Mat_{n \times n}(\CC)$ and 
$\bigl\{\eta_{p, q}(X)\bigr\}_{1\leq p, q\leq n}$ be such that 
the equality
(\ref{E:recurs-formula}) is true. Then we have:
 \begin{equation}\label{eq: eta final formula}
\eta_{p,q}(X)=\sum_{\begin{smallmatrix}p\leq i\leq n\\
1\leq j\leq q\end{smallmatrix}}x_{i,j}\,
\left[\sum_{\chi=0}^{i-p+q-j}
\sum_{\mathcal{W}_{(p,q),(i,j)}^{\chi}}\frac{(-1)^{\chi}}{\bar{\theta}_{v}(0){}^{\chi+1}}
\prod_{s=0}^{\chi-1}\left(F_{\alpha_{s+1},\beta_{s+1}}(0)\right)_{\alpha_{s},\beta_{s}}\right],
\end{equation}
where the third sum runs over all elements $\bigl(\alpha_s, \beta_s\bigr)_{0 \le s \le \chi}$
of $\mathcal{W}_{(p,q),(i,j)}^{\chi}$.
\end{lemma}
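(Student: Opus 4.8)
The plan is to obtain the closed expression (\ref{eq: eta final formula}) by iterating the recursion (\ref{Eq: first eta_p_q formula}) and organising the resulting terms along the south--west lattice paths of Definition \ref{def:W^t_(p,q),(i,j)}. I would argue by induction on the nonnegative integer $\mu(p,q) := (n-p)+(q-1)$. This induction is well founded: every index pair $(i_1,j_1)$ occurring on the right--hand side of (\ref{Eq: first eta_p_q formula}) satisfies $p \le i_1 \le n$, $1 \le j_1 \le q$ and $(i_1,j_1) \ne (p,q)$, whence $\mu(i_1,j_1) < \mu(p,q)$. The base case $\mu(p,q) = 0$, i.e.\ $(p,q) = (n,1)$, is immediate: the sum in (\ref{Eq: first eta_p_q formula}) is empty, so $\eta_{n,1}(X) = x_{n,1}/\bar{\theta}_v(0)$, while the right--hand side of (\ref{eq: eta final formula}) reduces to the single summand with $(i,j) = (n,1)$, $\chi = 0$ and the trivial length--zero path, whose value is $x_{n,1}\,(-1)^0/\bar{\theta}_v(0)$ times the empty product.

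For the inductive step I would substitute the induction hypothesis for each $\eta_{i_1,j_1}$ in (\ref{Eq: first eta_p_q formula}) and reindex. After multiplication by $-1/\bar{\theta}_v(0)$, the term attached to a first vertex $(i_1,j_1)$, a target $(i,j)$ and a path $(\alpha_s,\beta_s)_{0 \le s \le \chi'} \in \mathcal{W}^{\chi'}_{(i_1,j_1),(i,j)}$ picks up the prefactor $(-1)^{\chi'+1}/\bar{\theta}_v(0)^{\chi'+2}$ together with the extra matrix entry $\bigl(F_{i_1,j_1}(0)\bigr)_{p,q}$. The crucial point is that sending a path $(\alpha_s,\beta_s)_{0 \le s \le \chi} \in \mathcal{W}^{\chi}_{(p,q),(i,j)}$ with $\chi \ge 1$ to the pair consisting of its first vertex $(\alpha_1,\beta_1)$ and its truncation $(\alpha_s,\beta_s)_{1 \le s \le \chi} \in \mathcal{W}^{\chi-1}_{(\alpha_1,\beta_1),(i,j)}$ is a bijection onto exactly these data with $\chi' = \chi - 1$, because the admissible first steps from $(p,q) = (\alpha_0,\beta_0)$ are precisely the vertices $(\alpha_1,\beta_1)$ with $\alpha_1 \ge p$, $\beta_1 \le q$, $(\alpha_1,\beta_1) \ne (p,q)$ — which is the index range of the sum in (\ref{Eq: first eta_p_q formula}). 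Under this bijection the extra factor $\bigl(F_{i_1,j_1}(0)\bigr)_{p,q} = \bigl(F_{\alpha_1,\beta_1}(0)\bigr)_{\alpha_0,\beta_0}$ becomes the $s=0$ factor of $\prod_{s=0}^{\chi-1}\bigl(F_{\alpha_{s+1},\beta_{s+1}}(0)\bigr)_{\alpha_s,\beta_s}$, the signs and the powers of $\bar{\theta}_v(0)$ collapse to $(-1)^{\chi}/\bar{\theta}_v(0)^{\chi+1}$, and the remaining summand $x_{p,q}/\bar{\theta}_v(0)$ of (\ref{Eq: first eta_p_q formula}) is the $\chi = 0$ contribution, coming from the length--zero path at $(p,q)$. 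Finally, since each edge of a south--west path strictly increases the first coordinate or strictly decreases the second, a path in $\mathcal{W}^{\chi}_{(p,q),(i,j)}$ has length at most $(i-p)+(q-j)$ and this set is empty for larger $\chi$, which lets one truncate the sum over $\chi$ at $i-p+q-j$. Collecting the contributions gives (\ref{eq: eta final formula}) for $(p,q)$ and completes the induction.

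I expect the one genuinely delicate point to be this combinatorial bookkeeping: verifying that the first--step decomposition of south--west paths really is a bijection matching the index set of the recursion, and that the accompanying factors $(-1)^{\chi'+1}$, $\bar{\theta}_v(0)^{-(\chi'+2)}$ and $\bigl(F_{i_1,j_1}(0)\bigr)_{p,q}$ recombine exactly into the prefactor $(-1)^{\chi}/\bar{\theta}_v(0)^{\chi+1}$ and the length--$\chi$ product in (\ref{eq: eta final formula}); everything else is routine substitution. An equivalent and slightly more compact route avoids the induction altogether: by (\ref{E:recurs-formula}) one has $X = \bar{\theta}_v(0)\,(\id - \mathcal{N})(\eta)$, where $\mathcal{N}$ is the operator on $\Mat_{n \times n}(\CC)$ with entries $\mathcal{N}_{(p,q),(i,j)} = -\bigl(F_{i,j}(0)\bigr)_{p,q}/\bar{\theta}_v(0)$ for $(i,j) \ne (p,q)$, which by Proposition \ref{P:basis-of-Sol} is strictly south--west and hence nilpotent; inverting gives $\eta = \bar{\theta}_v(0)^{-1}\sum_{\chi \ge 0}\mathcal{N}^{\chi}(X)$, and (\ref{eq: eta final formula}) is nothing but the expansion of each $\mathcal{N}^{\chi}$ as a sum over paths.
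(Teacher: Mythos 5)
Your proof is correct and follows the same route the paper intends: the paper simply asserts that the formula results from iterating the recursion (\ref{Eq: first eta_p_q formula}), and your induction on $(n-p)+(q-1)$ together with the first-step decomposition of south--west paths is exactly that iteration carried out carefully (the nilpotent-operator reformulation at the end is the same argument in different packaging). The sign, the power of $\bar{\theta}_v(0)$, the truncation of the $\chi$-sum at $(i-p)+(q-j)$, and the matching of the extra factor $\bigl(F_{\alpha_1,\beta_1}(0)\bigr)_{\alpha_0,\beta_0}$ with the $s=0$ term of the product all check out.
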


\begin{corollary} Let $\bigl\{e_{a, b}\bigr\}_{1 \le a, b \le n}$ be the standard basis
of $\Mat_{n \times n}(\CC)$. If $a \ge p$ and $b \le q$ then   we have:
\[
\eta_{p,q}\left(e_{a,b}\right)=\sum_{\chi=0}^{a-p+q-b}
\sum_{(\alpha_s, \beta_s)_{0 \le s \le \chi} \in \mathcal{W}_{(p,q),(a,b)}^{\chi}}
\frac{(-1)^{\chi}}{\bar{\theta}_{v}(0){}^{\chi+1}}
\prod_{s=0}^{\chi-1}\left(F_{\alpha_{s+1},\beta_{s+1}}(0)\right)_{\alpha_{s},\beta_{s}},
\]
whereas in the remaining cases  $\eta_{p,q}(e_{a,b})=0$. Hence,
$\res_{0}^{-1}\left(e_{a,b}\right)=\gamma^{a,b}$,
where 
\[
\gamma^{a,b}(z) = 
\sum_{\begin{smallmatrix}1\leq p\leq a\\
b\leq q\leq n\end{smallmatrix}}F_{p,q}(z)\,\left[\sum_{\chi=0}^{a-p+q-b}
\sum_{(\alpha_s, \beta_s)_{0 \le s \le \chi} \in \mathcal{W}_{(p,q),(a,b)}^{\chi}}\frac{(-1)^{\chi}}{\bar{\theta}_{v}(0){}^{\chi+1}}
\prod_{s=0}^{\chi-1}\left(F_{\alpha_{s+1},\beta_{s+1}}(0)\right)_{\alpha_{s},\beta_{s}}\right].
\]
\end{corollary}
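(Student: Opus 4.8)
The plan is to obtain the corollary as a direct specialization of the preceding lemma, using the fact that $\res_0^{-1}$ is a linear map written out in the basis $\{F_{i,j}\}$. First I would recall that, by Proposition \ref{P:basis-of-Sol}, the functions $\{F_{i,j}\}_{1\le i,j\le n}$ form a basis of $\Sol$, so that for any $X\in\Mat_{n\times n}(\CC)$ the element $\res_0^{-1}(X)\in\Sol$ admits the expansion $\res_0^{-1}(X)=\sum_{1\le i,j\le n}\eta_{i,j}(X)\,F_{i,j}$, where the scalars $\eta_{i,j}(X)$ are the unique coefficients for which (\ref{E:recurs-formula}) holds. Since (\ref{E:recurs-formula}) is a linear system in the $\eta$'s that can be solved recursively via (\ref{Eq: first eta_p_q formula}), each map $X\mapsto\eta_{i,j}(X)$ is linear, and the preceding lemma gives the closed formula (\ref{eq: eta final formula}) for $\eta_{p,q}(X)$ in terms of the entries $x_{i,j}$ of $X$.

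Next I would put $X=e_{a,b}$, so that $x_{i,j}=\delta_{i,a}\,\delta_{j,b}$. In the outer double sum of (\ref{eq: eta final formula}) only the term $(i,j)=(a,b)$ survives, and it falls within the range of summation exactly when $p\le a$ and $b\le q$; this yields the asserted expression for $\eta_{p,q}(e_{a,b})$ in that case. In the remaining cases, that is when $p>a$ or $q<b$, the pair $(a,b)$ does not lie in the summation range, the double sum is empty, and $\eta_{p,q}(e_{a,b})=0$. (This vanishing can also be read off directly from (\ref{E:recurs-formula}) and Proposition \ref{P:basis-of-Sol}(\ref{It:it2}), since $(F_{i,j}(0))_{p,q}=0$ unless $i\ge p$ and $j\le q$.)

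Finally, substituting these values into $\res_0^{-1}(e_{a,b})=\sum_{1\le p,q\le n}\eta_{p,q}(e_{a,b})\,F_{p,q}(z)$ and dropping the vanishing terms restricts the summation to $1\le p\le a$ and $b\le q\le n$, which is precisely the formula for $\gamma^{a,b}(z)$ in the statement. There is no genuine obstacle here: all of the analytic input (the basis $\{F_{i,j}\}$ of $\Sol$ and the entrywise description of the $F_{i,j}$ from Proposition \ref{P:basis-of-Sol}) and the combinatorial input (the path sums over $\mathcal{W}^{\chi}_{(p,q),(i,j)}$) are already in place, so the only thing needing care is the index bookkeeping, namely checking that the constraints $p\le i\le n$, $1\le j\le q$ together with $i=a$, $j=b$ collapse to $p\le a$ and $b\le q$.
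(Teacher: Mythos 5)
Your proof is correct and follows exactly the route the paper intends: the corollary is the specialization $X=e_{a,b}$ of the preceding lemma's formula for $\eta_{p,q}(X)$, with the Kronecker delta collapsing the double sum and the index constraints $p\le i\le n$, $1\le j\le q$ reducing to $p\le a$, $b\le q$. The paper offers no separate proof precisely because this bookkeeping is all that is required, and you have carried it out accurately.
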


\noindent
Denote  the matrix entries of $\left(\gamma^{a,b}(z)\right)_{c,d}$
by $\gamma_{c,d}^{a,b}(z)$. If $c < a$ or $d < b$ then $\gamma_{c,d}^{a,b}(z) = 0$.
On the other hand, if $a \ge c$ and $d \ge b$ then we get:
 \begin{equation}\label{eq:gamma formula 1}
\gamma_{c,d}^{a,b}(z)=\sum_{\begin{smallmatrix}c\leq p\leq a\\
b\leq q\leq d\end{smallmatrix}}\Bigl[\bigl(F_{p,q}(z)\bigr)_{c,d}
\sum_{\chi=0}^{a-p+q-b}
\sum_{\mathcal{W}_{(p,q),(a,b)}^{\chi}}\frac{(-1)^{\chi}}{\bar{\theta}_{v}(0){}^{\chi+1}}
\prod_{s=0}^{\chi-1}\left(F_{\alpha_{s+1},\beta_{s+1}}(0)\right)_{\alpha_{s},\beta_{s}}\Bigr].
\end{equation}

\begin{remark}
In the formula (\ref{eq:gamma formula 1}) the function  $\gamma_{c,d}^{a,b}$ depends on 
one variable $z$. However, from its definition it is clear that it also depends on the parameter
$v \in \CC \setminus \Lambda$. Hence, in what follows, we shall consider it as a function
of two variables $z$ and $v$. 
\end{remark}

\noindent
\textbf{Computation of $\tilde{r}_{J}(v, y) :\Mat_{n\times n}(\mathbb{C})\rightarrow \Mat_{n\times n}(\mathbb{C})$}.
Recall that $\tilde{r}_{B}(v,y): {\Mat}_{n\times n}(\mathbb{C})\rightarrow {\Mat}_{n\times n}(\mathbb{C})$
is the composition $\ev_y \circ \res^{-1}_0$.
The tensor  $r_{B}(v,y) \in \Mat_{n \times n}(\CC) \otimes \Mat_{n \times n}(\CC)$
 is the image of  $\tilde{r}_{B}(v,y)$
under  the canonical isomorphism \[
\Lin\bigl({\Mat}_{n\times n}(\mathbb{C}), {\Mat}_{n\times n}(\mathbb{C})\bigr)\lar
 {\Mat}_{n\times n}(\mathbb{C})\otimes {\Mat}_{n\times n}(\mathbb{C}).
 \]
 In the standard basis $\bigl\{e_{a, b}\bigr\}_{1 \le a, b \le n}$ of $\Mat_{n \times n}(\CC)$ 
 this map
 is given as follows:
\[
\left(e_{a,b}\mapsto\sum_{1\leq c, \, d\leq n}\alpha_{c,d}^{a,b}\, e_{c,d}\right)\mapsto
\sum_{1\leq c, \, d\leq n}\alpha_{c,d}^{a,b}\, e_{b,a}\otimes e_{c,d}.
\]
Hence, the solution of the associative Yang--Baxter equation (\ref{E:AYBE}) attached 
to the Jordan block $J$ is the following:
 \begin{eqnarray}
r_{J}(v,y) & =\frac{1}{\bar{\theta}\left(\frac{1+\tau}{2}+y\right)} & \sum_{\begin{smallmatrix}0\leq k\leq n-1\\
0\leq l\leq n-1\end{smallmatrix}}\sum_{\begin{smallmatrix}1\leq i\leq n-l\\
1\leq j\leq n-k\end{smallmatrix}}\gamma_{j,i+l}^{j+k,i}(v, y)\, e_{i,j+k}\otimes e_{j,i+l},\label{eq: first formula for r_B}\end{eqnarray}
where $\gamma_{j,i+l}^{j+k,i}(v, y)$ are given by  (\ref{eq:gamma formula 1}).
Our next goal is to simplify this expression.

\begin{definition}
For any $i,j,\chi$ in $\mathbb{N}$, we set
 \[
\mathcal{W}_{(i,j)}^{\chi}=\left\{ \left(\alpha_{s},\beta_{s}\right)_{1\leq s\leq\chi}\in\left(\mathbb{N}\times\mathbb{N}\right)^{\chi}\left|
(\alpha_{s},\beta_{s})\neq(0,0); \; 
\left(\sum_{s=1}^{\chi}\alpha_{s},\sum_{s=1}^{\chi}\beta_{s}\right)=(i,j)
\right.
\right\}. \]
We
may think of $\mathcal{W}_{(i,j)}^{\chi}$ as the set paths of length
$\chi$ in $\mathbb{N}\times\mathbb{N}$ starting at the point  $(0,0)$,
ending in $(i,j)$ and going in the ``nord-east'' direction. Note that any path
$\bigl\{(\alpha_{s},\beta_{s})\bigr\}_{0 \le s \le \chi}$ from $\mathcal{W}_{(j+a,i+b),(j+k,i)}^{\chi}$
corresponds to   the element $\bigl\{(\alpha_{s+1}-\alpha_{s},\beta_{s}-\beta_{s+1})_{s}\bigr\}_{1 \le s \le \chi}$ of
 $\mathcal{W}_{(k-a,b)}^{\chi}$. For the sake of simplicity, we shall use the same 
 notation $\bigl\{(\alpha_{s},\beta_{s})\bigr\}_{0 \le s \le \chi}$
 for both elements.
\end{definition}

\noindent
In these notations, the  formula (\ref{eq:gamma formula 1}) can be rewritten as follows:
 \[
\gamma_{j,i+l}^{j+k,i}(y)=\sum_{\begin{smallmatrix}0\leq a\leq k\\
0\leq b\leq l\end{smallmatrix}}\left[\bigl(F_{j+a,i+b}(y)\bigr)_{j,i+l}\cdot \sum_{\chi=0}^{k-a+b}\sum_{\mathcal{W}_{(k-a,b)}^{\chi}}\frac{(-1)^{\chi}}{\bar{\theta}_{v}(0){}^{\chi+1}}
\prod_{s=1}^{\chi}\left(F_{\alpha_{s+1},\beta_{s+1}}(0)\right)_{\alpha_{s},\beta_{s}}\right],
\]
where the third sum is taken over all elements $\bigl(\alpha_s, \beta_s\bigr)_{1 \le s \le \chi}$
of $\mathcal{W}_{(k-a,b)}^{\chi}$.
Recall
that 
$\bigl(F_{\alpha,\beta}\bigr)_{\gamma, \delta} = 0$ if $\gamma > \alpha$ or $\beta > \delta$. 
For $\alpha \ge \gamma$ and $\delta \ge \beta$ we have: 
$
\bigl(F_{\alpha,\beta}\bigr){}_{\gamma, \delta} =
U_{(n-(\alpha-\gamma), \delta-\beta+1)},
$
where $
\bigl(U(v, z)\bigr)_{\alpha,\beta}=e_{n(\alpha-1)+\beta}^{t}\, \exp(\nabla\, N)\bar{\theta}_{v}(z)\, e_{n(n-1)+1}$.
Hence, 
 \[
\gamma_{j,i+l}^{j+k,i}(v, y)=\sum_{\begin{smallmatrix}0\leq a\leq k\\
0\leq b\leq l\end{smallmatrix}}
\left[
\bigl(U(v, y)\bigr){}_{(n-a,l-b+1)}\cdot
\sum_{\chi=0}^{k-a+b}\sum_{\mathcal{W}_{(k-a,b)}^{\chi}}\frac{(-1)^{\chi}}{\bar{\theta}_{v}(0){}^{\chi+1}}
\prod_{s=1}^{\chi}\bigl(U(0)\bigr)_{n-\alpha_{s},\beta_{s}+1}
\right]
\]
 Next, we need the following combinatorial lemma.

\begin{lemma} \label{lem: powers of N} In the notations of Definition \ref{D:defin-of-matrixN}
 we have the following results.
\begin{enumerate}
\item\label{It:part1} For all $r\in\mathbb{N}$ we get \[
N^{r}=
\left(\begin{array}{cccc}
N_{0}^{(r)} & N_{1}^{(r)} & \cdots & N_{n-1}^{(r)}\\
0 & \ddots & \ddots & \vdots\\
\vdots & \ddots & \ddots & N_{1}^{(r)}\\
0 & \cdots & 0 & N_{0}^{(r)}
\end{array}
\right),\]
where each $N_{i}^{(r)}$, $0\leq i\leq n-1$ is the matrix of size $n \times n$ 
given by\[
N_{i}^{(r)}=\sum_{k=0}^{r}(\begin{smallmatrix}r\\
k\end{smallmatrix})\, A_{0}^{r-k}\sum_{s\in S_{i}^{k}}\prod_{l=0}^{k-1}A_{s_{l}-s_{l+1}}\]
where for all $k,i\in\mathbb{N}$ we denote 
\[
S_{i}^{k}=\left\{ \left(s_{j}\right)_{0\leq j\leq k}\in\mathbb{N}^{k+1}\big|
s_{0}=i, 
s_{k}=0, 
s_{j+1}<s_{j},\,0\leq j\leq k
\right\}.
 \]

\item For any $i,j,\chi\in\mathbb{N}$, $\left(u_{s}\right)_{1\leq s\leq\chi}\in[0,2n-1]^{\chi}$
and $u=\sum_{s=1}^{\chi}u_{s}$ holds:
 \[
\sum\limits_{
(\alpha_s, \beta_s)_{1 \le s \le \chi} \in \mathcal{W}_{(i,j)}^{\chi}}
\prod_{s=1}^{\chi}e_{\beta_{s}+1}^{t}\, N_{\alpha_{s}}^{\left(u_{s}\right)}\, e_{1}=e_{j+1}^{t}\, N_{i}^{\left(u\right)}\, e_{1}.
\]
\end{enumerate}
\end{lemma}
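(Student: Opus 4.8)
The plan is to translate everything into a generating-function statement about power series in two auxiliary variables, one encoding the block structure of $N$ and one encoding the internal shift of each block; once the dictionary is set up, both parts are formal.

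\textbf{Part (1).} First I would record that $N$ is the block--upper--triangular Toeplitz matrix whose block on the $i$-th superdiagonal is $A_i$. Under the ring homomorphism sending such a matrix to $\sum_{i=0}^{n-1}A_i x^i\in\Mat_{n\times n}(\CC)[x]/(x^n)$, block multiplication becomes polynomial multiplication modulo $x^n$, so that $N^r$ corresponds to $\mathcal{A}(x)^r$ with $\mathcal{A}(x)=A_0+p(x)\mathbbm{1}$ and $p(x)=\sum_{d=1}^{n-1}(-a_d)x^d$. Since $A_0$ commutes with the scalar $p(x)\mathbbm{1}$, the binomial theorem gives $\mathcal{A}(x)^r=\sum_{k=0}^r\binom{r}{k}A_0^{r-k}p(x)^k$, and $N_i^{(r)}$ is the coefficient of $x^i$. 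It then remains to identify $[x^i]p(x)^k$ with $\sum_{s\in S_i^k}\prod_{l=0}^{k-1}A_{s_l-s_{l+1}}$, which is the bijection between $S_i^k$ and compositions $(e_1,\dots,e_k)$ of $i$ into positive parts given by $e_l=s_{l-1}-s_l$, using that each $A_{e_l}=-a_{e_l}\mathbbm{1}$ is scalar (so the factors may be reordered and counted with multiplicity $\binom{r}{k}$). This is exactly the asserted formula. One could alternatively induct on $r$ using the block recursion $N_i^{(r+1)}=\sum_{d=0}^{i}A_d N_{i-d}^{(r)}$ and Pascal's identity, but the generating-function route also sets up Part (2).

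\textbf{Part (2).} Next I would resolve $A_0$ itself: writing $\mathsf{S}$ for the nilpotent matrix with $\mathsf{S}_{p,q}=\delta_{p,q+1}$, one has $A_0=\sum_{d=1}^{n-1}a_d\mathsf{S}^d=:q(\mathsf{S})$, and since $\mathsf{S}$ commutes with scalars, Part (1) rewrites as $\mathcal{A}(x)^r=\bigl(q(\mathsf{S})+p(x)\mathbbm{1}\bigr)^r=\sum_{\ell=0}^r\binom{r}{\ell}q(\mathsf{S})^{\ell}p(x)^{r-\ell}$. Because $e_{j+1}^t\mathsf{S}^m e_1=\delta_{j,m}$, applying $e_{j+1}^t(\,\cdot\,)e_1$ turns $q(\mathsf{S})^\ell$ into $[y^j]q(y)^\ell$ for a second formal variable $y$, so that
\[
e_{j+1}^t N_i^{(r)} e_1=[x^i y^j]\bigl(p(x)+q(y)\bigr)^r=:[x^i y^j]\,\Phi(x,y)^r ,
\]
where $\Phi(x,y)=\sum_{d=1}^{n-1}\tfrac{(-1)^{d+1}}{d}(x^d-y^d)$, which agrees with $\log(1+x)-\log(1+y)$ in every bidegree with both exponents at most $n-1$ — the only ones that arise. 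Since $\Phi(0,0)=0$, for $u_s\ge 1$ the power $\Phi^{u_s}$ has no constant term, so the restriction $(\alpha_s,\beta_s)\ne(0,0)$ defining $\mathcal{W}_{(i,j)}^\chi$ is automatic when one expands
\[
\bigl[x^i y^j\bigr]\prod_{s=1}^{\chi}\Phi^{u_s}
=\sum_{(\alpha_s,\beta_s)_s\in\mathcal{W}_{(i,j)}^\chi}\ \prod_{s=1}^{\chi}\bigl[x^{\alpha_s} y^{\beta_s}\bigr]\Phi^{u_s} ;
\]
combining this with $\prod_s\Phi^{u_s}=\Phi^{u}$ ($u=\sum_s u_s$) and the displayed coefficient formula yields precisely the identity of Lemma~\ref{lem: powers of N}(2). (The terms with some $u_s=0$ are vacuous because of the constraint $(\alpha_s,\beta_s)\neq(0,0)$, so one may assume all $u_s\ge 1$ from the start.)

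The only genuinely delicate point is the combinatorial bookkeeping in Part (1): matching $S_i^k$ with compositions of $i$ and confirming the binomial multiplicity of $A_0^{r-k}$, while keeping the two indexings straight — block shifts counted from $0$, entries inside a block from $1$, and the shift from block-column $n$ to block-row $n-i$ equal to $i$. Once the dictionaries $N_i^{(r)}\leftrightarrow[x^i]\mathcal{A}(x)^r$ and $e_{j+1}^t N_i^{(r)}e_1\leftrightarrow[x^iy^j]\Phi(x,y)^r$ are in place, every remaining step is a routine manipulation of formal power series in $x$ and $y$.
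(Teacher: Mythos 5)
The paper does not actually prove this lemma --- the authors dismiss it as ``elementary but tedious combinatorics'' and omit the argument --- so there is no official proof to compare against; judged on its own terms, your generating-function mechanism is the right one and almost everything checks out. The identification of block upper-triangular Toeplitz matrices with polynomials in a central variable $x$ modulo $x^{n}$, the binomial expansion of $(A_{0}+p(x)\mathbbm{1})^{r}$, the bijection between $S_{i}^{k}$ and compositions of $i$ into $k$ positive parts, and the further substitution $A_{0}=q(\mathsf{S})$ with $e_{j+1}^{t}\mathsf{S}^{m}e_{1}=\delta_{j,m}$, yielding $e_{j+1}^{t}N_{i}^{(r)}e_{1}=[x^{i}y^{j}]\Phi(x,y)^{r}$ in your notation, are all correct (I verified the index conventions and several low-order cases), and Part (2) for exponents $u_{s}\ge 1$ then follows cleanly from $\Phi^{u}=\prod_{s}\Phi^{u_{s}}$ together with $[x^{0}y^{0}]\Phi^{u_{s}}=0$.

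The one genuine flaw is the parenthetical treatment of the case $u_{s}=0$, which the statement explicitly allows since $(u_{s})\in[0,2n-1]^{\chi}$. You correctly observe that such an index $s$ kills every summand on the left-hand side: indeed $N_{\alpha}^{(0)}=\delta_{\alpha,0}\,\mathbbm{1}$, so $(\alpha_{s},\beta_{s})\neq(0,0)$ forces $e_{\beta_{s}+1}^{t}N_{\alpha_{s}}^{(0)}e_{1}=0$. But you then conclude that ``one may assume all $u_{s}\ge 1$ from the start'', which tacitly presumes the right-hand side also vanishes --- and it does not, because $u=\sum_{s}u_{s}$ forgets which $u_{s}$ were zero. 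Concretely, take $\chi=2$, $(u_{1},u_{2})=(0,1)$ and $(i,j)=(1,0)$: then $\mathcal{W}_{(1,0)}^{2}$ is empty (two pairs, each nonzero, with nonnegative entries summing to $(1,0)$ cannot exist), so the left side is $0$, while the right side is $e_{1}^{t}N_{1}^{(1)}e_{1}=e_{1}^{t}A_{1}e_{1}=-a_{1}=1$. So Part (2) as stated is in fact false; the correct hypothesis is $u_{s}\ge 1$ for all $s$, which is the only case the paper ever invokes (in the later sums the $r_{s}=0$ terms vanish identically for exactly the reason you noted). Your proof becomes complete once the statement is restricted in this way; as written, the claim that the $u_{s}=0$ case is ``vacuous'' is the gap.
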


\noindent
The  proof of this lemma is based  on some
  elementary but tedious
combinatorics, and  is therefore omitted.

\medskip
\noindent
From Lemma \ref{lem: powers of N}(\ref{It:part1}) it follows that for all 
$0 \le \alpha < n$, $1 \le \beta \le n$
we have:
\[
\bigl(U(v, z)\bigr)_{n-\alpha,\beta}=e_{n(n-\alpha-1)+\beta}^{t}\, \exp(\nabla_z\, N)\bar{\theta}_{v}(z)\, e_{n(n-1)+1}
=\sum_{r=0}^{2n-1}e_{\beta}^{t}\, N_{\alpha}^{(r)}\, e_{1}\,
\frac{\nabla_z^r\bigl(\bar{\theta}_v(z)\bigr)}{r!}.
\]
Recall that $\bar{\theta}_v(z) = \bar{\theta}(z + v + \frac{1+\tau}{2})$. Note that we have:
$$
\nabla_z^r\bigl(\bar{\theta}_v(z)\bigr) = \nabla_v^r\bigl(\bar{\theta}_v(z)\bigr) =
\left(\frac{\partial}{\partial v}\right)^{r}\bar{\theta}\left(z+\frac{\tau+1}{2}+v\right).
$$
Therefore, we can rewrite the expression for $\gamma_{j,i+l}^{j+k,i}(v, y)$ as follows:
\begin{equation}\label{eq: gamma before faa da bruni lemma}
\gamma_{j,i+l}^{j+k,i}(v, y) = 
\sum_{\begin{smallmatrix}0\leq a\leq k\\
0\leq b\leq l\end{smallmatrix}}\left(\left(\sum_{r=0}^{2n-1}e_{l-b+1}^{t}\, N_{a}^{(r)}\, e_{1}\,\frac{\nabla_v^r \bigl(\bar{\theta}_v(y)\bigr)}{r!}\right)\cdot\right. 
\end{equation}
$$
\cdot\biggl.
\sum_{\chi=0}^{k-a+b}
\sum_{ (\alpha_s, \beta_s)_{1 \le s \le \chi} \in \mathcal{W}_{(k-a,b)}^{\chi}}\frac{(-1)^{\chi}}{\bar{\theta}_{v}(0){}^{\chi+1}}
\prod_{s=1}^{\chi}\biggl(\sum_{r_{s}=0}^{2n-1}e_{\beta_{s}+1}^{t}\, N_{\alpha_{s}}^{(r_{s})}\, e_{1}\,\frac{\nabla_v^{r_s}\bigl(\bar{\theta}_v(0)\bigr)}{r_{s}!}\biggr)
\biggr).
$$
Next, we  need the following generalization of the Leibniz formula.
\begin{lemma}\label{L:heavy-exercise}
Let $f,g$ be any meromorphic functions on $\mathbb{C}$ and  $\nabla = -\frac{1}{2\pi i} \frac{d}{dz}$.
Then  in the notations of Definition \ref{D:def-of-nabla} the following formula is true:
\begin{align*}
\nabla_{k,l}\Bigl(\frac{f}{g}\Bigr) = 
\sum_{
\begin{smallmatrix}0\leq a\leq k\\
0\leq b\leq l
\end{smallmatrix}}
\Bigl[
\Bigl(\sum_{r=0}^{2n-1}e_{l-b+1}^{t}\, N_{a}^{(r)}\, e_{1}\,\frac{\nabla^r(f)}{r!}
\Bigr)
\cdot
\Bigl(\sum_{\chi=0}^{k-a+b}
\sum_{(\alpha_s, \beta_s)_{1 \le s \le \chi} \in \mathcal{W}_{(k-a,b)}^{\chi}}
\frac{(-1)^{\chi}}{g^{\chi+1}} \cdot \\
\cdot \prod_{s=1}^{\chi}\sum_{r_{s}=0}^{2n-1}e_{\beta_{s}+1}^{t}\, N_{\alpha_{s}}^{(r_{s})}\, e_{1}\,\frac{\nabla^{r_s}(g)}{r_{s}!}
\Bigr)
\Bigr].
\end{align*}
\end{lemma}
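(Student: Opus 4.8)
The plan is to reduce the identity to the ordinary Leibniz rule together with the Fa\`a di Bruno formula, folding all the matrix bookkeeping into Lemma~\ref{lem: powers of N}. I would start by recording the explicit polynomial form of the operators $\nabla_{k,l}$: combining Definition~\ref{D:def-of-nabla}, which reads $\nabla_{k,l}=e^{t}_{n(n-k-1)+l+1}\exp(\nabla N)e_{n(n-1)+1}$, with the block decomposition of $N^{r}$ from Lemma~\ref{lem: powers of N}(\ref{It:part1}) (the $n\times n$ block $N^{(r)}_{i}$ sitting in every block position $(p,p+i)$), and reading off the relevant block exactly as in the computation of $\bigl(U(v,z)\bigr)_{n-\alpha,\beta}$ preceding the lemma, one gets
\[
\nabla_{k,l}(h)=\sum_{r\ge 0}\bigl(e^{t}_{l+1}N^{(r)}_{k}e_{1}\bigr)\frac{\nabla^{r}(h)}{r!}
\]
for any meromorphic $h$ (a finite sum, since $N^{(r)}_{k}=0$ for $r\ge 2n-1$). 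On the analytic side, the Leibniz rule gives $\tfrac{1}{r!}\nabla^{r}(fg^{-1})=\sum_{j+m=r}\tfrac{\nabla^{j}f}{j!}\cdot\tfrac{\nabla^{m}(g^{-1})}{m!}$, while Fa\`a di Bruno applied to $u\mapsto u^{-1}$, written over \emph{ordered} compositions, gives $\tfrac{\nabla^{m}(g^{-1})}{m!}=\sum_{\chi\ge 0}\tfrac{(-1)^{\chi}}{g^{\chi+1}}\sum_{r_{1}+\cdots+r_{\chi}=m,\,r_{s}\ge 1}\prod_{s}\tfrac{\nabla^{r_{s}}g}{r_{s}!}$ (a one--line induction on $m$; the $\chi=0$ term is $g^{-1}$ when $m=0$).

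I would then work from the right--hand side of the lemma inwards. In each $g$--factor the terms with some $r_{s}=0$ drop out, since $e^{t}_{\beta_{s}+1}N^{(0)}_{\alpha_{s}}e_{1}=\delta_{\alpha_{s},0}\delta_{\beta_{s},0}=0$ for $(\alpha_{s},\beta_{s})\neq(0,0)$, so effectively $r_{s}\ge 1$. Summing over $(\alpha_{s},\beta_{s})_{1\le s\le\chi}\in\mathcal{W}^{\chi}_{(k-a,b)}$ and invoking Lemma~\ref{lem: powers of N}(2) with the fixed exponents $(r_{1},\dots,r_{\chi})$ collapses $\sum_{\mathcal{W}^{\chi}_{(k-a,b)}}\prod_{s}e^{t}_{\beta_{s}+1}N^{(r_{s})}_{\alpha_{s}}e_{1}$ to the single scalar $e^{t}_{b+1}N^{(r_{1}+\cdots+r_{\chi})}_{k-a}e_{1}$; regrouping by $m=r_{1}+\cdots+r_{\chi}$ and applying the Fa\`a di Bruno formula turns the $g$--factor into $\sum_{m}\bigl(e^{t}_{b+1}N^{(m)}_{k-a}e_{1}\bigr)\tfrac{\nabla^{m}(g^{-1})}{m!}$, while the $f$--factor is already $\sum_{j}\bigl(e^{t}_{l-b+1}N^{(j)}_{a}e_{1}\bigr)\tfrac{\nabla^{j}f}{j!}$. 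Hence the right--hand side equals
\[
\sum_{j,m\ge 0}\frac{\nabla^{j}f}{j!}\,\frac{\nabla^{m}(g^{-1})}{m!}\sum_{a=0}^{k}\sum_{b=0}^{l}\bigl(e^{t}_{l-b+1}N^{(j)}_{a}e_{1}\bigr)\bigl(e^{t}_{b+1}N^{(m)}_{k-a}e_{1}\bigr).
\]

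It remains to prove the convolution identity $\sum_{a,b}\bigl(e^{t}_{l-b+1}N^{(j)}_{a}e_{1}\bigr)\bigl(e^{t}_{b+1}N^{(m)}_{k-a}e_{1}\bigr)=e^{t}_{l+1}N^{(j+m)}_{k}e_{1}$ and then to reassemble. For the identity I would use $N^{j}N^{m}=N^{j+m}$, which via the block decomposition gives $N^{(j+m)}_{k}=\sum_{a=0}^{k}N^{(j)}_{a}N^{(m)}_{k-a}$, together with the observation --- read off from Lemma~\ref{lem: powers of N}(\ref{It:part1}) and the relations $A_{p}=-a_{p}\mathbbm{1}$ --- that every block $N^{(r)}_{i}$ is a polynomial in the strictly lower--triangular matrix $A_{0}$, hence a lower--triangular Toeplitz matrix: $\bigl(N^{(r)}_{i}\bigr)_{p,q}$ depends only on $p-q$ and vanishes for $p<q$. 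Writing the $(l+1,1)$--entry of $\sum_{a}N^{(j)}_{a}N^{(m)}_{k-a}$ as $\sum_{a}\sum_{c}\bigl(N^{(j)}_{a}\bigr)_{l+1,c}\bigl(N^{(m)}_{k-a}\bigr)_{c,1}$, substituting $c=l+1-b$ (the out--of--range terms vanishing by triangularity), and using the Toeplitz property to rewrite the two entries as $e^{t}_{b+1}N^{(j)}_{a}e_{1}$ and $e^{t}_{l-b+1}N^{(m)}_{k-a}e_{1}$ recovers the left--hand side, up to the harmless reindexing $b\mapsto l-b$. Plugging this back and reassembling $\sum_{j+m=r}\tfrac{\nabla^{j}f}{j!}\tfrac{\nabla^{m}(g^{-1})}{m!}=\tfrac{1}{r!}\nabla^{r}(f/g)$ yields $\sum_{r}\bigl(e^{t}_{l+1}N^{(r)}_{k}e_{1}\bigr)\tfrac{\nabla^{r}(f/g)}{r!}=\nabla_{k,l}(f/g)$, as asserted.

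The hard part is the bookkeeping in the middle steps: keeping the path sets $\mathcal{W}^{\chi}_{(k-a,b)}$, the exponents $r_{s}$, and the distinguished ``$f$--step'' with increment $(a,l-b)$ consistently aligned, and checking that the degenerate boundary cases --- $a=0,b=l$ and $a=k,b=0$, where a designated increment is $(0,0)$ --- are correctly absorbed; this is exactly the ``elementary but tedious combinatorics'' already invoked for Lemma~\ref{lem: powers of N}. The two self--contained sub--facts worth isolating and proving by induction are the ordered--composition Fa\`a di Bruno formula and the convolution identity above.
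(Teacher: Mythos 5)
Your argument is correct, and it realizes precisely the strategy the paper indicates but does not write out: the paper leaves this lemma as an exercise, saying only that it follows from ``elementary combinatorics and the second part of Lemma~\ref{lem: powers of N}'', and that collapsing identity is exactly the pivot of your proof (your preliminary step of discarding the $r_{s}=0$ terms before invoking it is the right order of operations, since that identity genuinely requires positive exponents). The only detail worth adding explicitly is that regrouping by $m=r_{1}+\cdots+r_{\chi}$ and summing $\chi$ only up to $k-a+b$ still reproduces the full Fa\`a di Bruno expansion of $\nabla^{m}(g^{-1})/m!$ because the coefficient $e^{t}_{b+1}N^{(m)}_{k-a}e_{1}$ vanishes for $m>k-a+b$, which follows from the same triangular Toeplitz structure of the blocks that you use for the convolution identity.
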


\noindent
The proof of this lemma is based on  some  elementary combinatorics
and  the second part of Lemma \ref{lem: powers of N}. We leave it to an interested reader as an exercise. 
\qed 

\medskip
\noindent
Applying Lemma \ref{L:heavy-exercise} to Equality  (\ref{eq: gamma before faa da bruni lemma}), we finally
get:
\[
\frac{1}{\bar{\theta}\left(\frac{1+\tau}{2}+y\right)}
\gamma_{j,i+l}^{j+k,i}(v, y)=
\bigl(\nabla_{k,l}\bigr)_v
\left(\frac{\bar{\theta}(y+\frac{\tau+1}{2}+v)}{\bar{\theta}\left(\frac{1+\tau}{2}+y\right) \cdot \bar{\theta}(\frac{\tau+1}{2}+v)}\right).
\]
Recall that 
the first and third 
theta  functions $\theta$ and  $\bar{\theta}$  are related
by the equality 
$\bar{\theta}\left(z+\frac{1+\tau}{2}\right)=i\, q(z)\,\theta(z)$,
where $q(z)=\mbox{exp}(-\pi i(z+\frac{\tau}{4}))$. 
Thus, up to the constant  $\frac{\exp(\pi i \frac{\tau}{4})}{i \theta'(0)}$, 
the coefficient of the tensor $e_{i,j+k}\otimes e_{j,i+l}$ in the expansion 
(\ref{eq: first formula for r_B}) is $(\nabla_{k,l})_v\bigl(\sigma(v,y)\bigr)$.
This finishes the proof of Theorem \ref{thm: semi-simple main formula}.

 \begin{remark}
 The algorithm from Section \ref{S:BundlesandAYBE}
 assigning  to a matrix $B\in \GL_n(\CC)$ and a complex torus $E$ a solution
 of the associative  Yang--Baxter can be generalized to the case when $E$ is a singular Weierstra\ss{}
 cubic curve. In this case, one can use  a description of semi--stable vector bundles on $E$
 following the approach of \cite{Survey}, see also \cite{BK4}. However, all solutions produced in this way turn out to be degenerations of the constructed elliptic solutions, where we replace the Kronecker function
 $\sigma(u, x)$ by its trigonometric or rational degenerations
 $\cot(u) + \cot(x)$ or $\displaystyle{\frac{1}{u} + \frac{1}{x}}$.
 \end{remark}

\end{document}